\documentclass{amsart}
\usepackage{amssymb, amscd, enumitem}
\usepackage{hyperref}
\usepackage{pinlabel}
\usepackage{mathtools}

\setlength{\textheight}{43pc}
\setlength{\textwidth}{28pc}

\newtheorem{theorem}{Theorem} [section]

\newtheorem{lemma}[theorem]{Lemma}
\newtheorem{proposition}[theorem]{Proposition}
\newtheorem{corollary}[theorem]{Corollary}

\newtheorem{remark}[theorem]{Remark}

\newtheorem{theoremalpha}{Theorem}

\theoremstyle{definition}
\newtheorem{definition}[theorem]{Definition}
\newtheorem{notation}[theorem]{Notation}

\newcommand{\wt}[1]{\widetilde{#1}}

\newcommand\N{{\mathbb N}}
\newcommand\Z{{\mathbb Z}}

\newcommand{\cE }{\mathcal E}

\newcommand{\cL }{\mathcal L}

\newcommand{\cQ }{\mathcal Q}
\newcommand{\cP }{\mathcal P}

\newcommand{\cS }{\mathcal S}
\newcommand{\cT }{\mathcal T}

\newcommand{\p}{\mathrm{patt}}
\newcommand{\horz}{\rule[2.2pt]{2em}{0.4pt}}

\newcommand{\bs}[1]{\boldsymbol{#1}}

\newcommand{\w}{\vec{w}}
\renewcommand{\v}{\vec{v}}
\renewcommand{\u}{\vec{u}}

\address{Department of Mathematics\\
	Heriot-Watt University\\
	Edinburgh\\
	Scotland\\
	EH14 4AS}

\email{ace2@hw.ac.uk}
\subjclass[2010]{20F65, 20E45, 05E15}

\begin{document}

\author{Alex Evetts}

\title{Rational Growth in Virtually Abelian Groups}

\begin{abstract}
We show that any subgroup of a finitely generated virtually abelian group $G$ grows rationally relative to $G$, that the set of right cosets of any subgroup of $G$ grows rationally, and that the set of conjugacy classes of $G$ grows rationally. These results hold regardless of the choice of finite weighted generating set for $G$. 

Key words: Conjugacy growth, relative growth, coset growth, virtually abelian groups.
\end{abstract}
\maketitle

\section{Introduction}\label{sec:intro}

The related notions of growth functions and growth series of finitely generated groups have attracted a lot of attention in many different classes of groups \cite{DuchinSurvey}. In the case of growth series, it is perhaps surprising that there are very few results that are independent of the choice of finite generating set. In this paper we add three facts to those results. Namely that for a virtually abelian group, the conjugacy growth series, the relative growth series of any subgroup, and the coset growth series with respect to any subgroup, are rational functions for any choice of generating set.

The standard weighted growth series of any virtually abelian group was shown by Benson to be rational in \cite{Benson}. Liardet \cite{Liardet} built on work of Klarner \cite{Klarner} and others to show that the complete growth series (a generalisation of standard growth) is also rational. The corresponding result (with weight function uniformly equal to $1$) was proved for hyperbolic groups in the 1980s (see \cite{CannonNote}, \cite{CannonPaper}, \cite{WordProcessing}, and \cite{GromovEssays}), and for the integer Heisenberg group in \cite{DuchinShapiro}. The significance of these results comes from the fact that they hold for any choice of generating set $S$. This need not be the case in general. Stoll showed in \cite{Stoll} that there exist groups whose growth series are rational with respect to one generating set and transcendental with respect to another. Many other groups have been shown to have rational growth series with respect to `standard' generators, for example Coxeter groups, the soluble Baumslag-Solitar groups, certain automatic groups. However, little is known for other generating sets, or other types of growth series.

Let $G$ be a group with a finite generating set $S$, that generates $G$ as a monoid. Let $S^*$ denote the set of all words over $S$ (i.e. the free monoid on $S$). If we assign a positive integer weight, $\omega(s)$, to each element $s\in S$, we can define the weight of any word $s_1s_2\cdots s_k\in S^*$ to be $\sum_{i=1}^k\omega(s_i)$. In turn, we define the weight of an element $g\in G$, denoted $\omega(g)$, to be the minimal weight amongst all words that represent $g$.

We define the $\emph{(standard) weighted growth function}$ of $G$ with respect to $S$ and $\omega$ to be \begin{equation}\label{eq:standardfunction}
\sigma^\omega_{G,S}(n)=\#\{g\in G\mid\omega(g)=n\}.
\end{equation}

The \emph{weighted growth series} of a group $G$, with respect to a finite generating set $S$ with weight function $\omega$ is defined as
	\begin{equation}\label{eq:standardseries}
	\cS^\omega_{G,S}(z)=\sum_{n=0}^\infty\sigma^\omega_{G,S}(n)z^n.
	\end{equation}

It is natural to ask under what conditions $\cS^\omega_{G,S}(z)$ can be expressed as a rational function, that is, when do there exist polynomials $p,q$ with integer coefficients such that $\cS^\omega_{G,S}(z)=\frac{p(z)}{q(z)}$. The definition given via \eqref{eq:standardfunction} and \eqref{eq:standardseries} is often referred to as \emph{strict} or \emph{spherical} growth. One can also consider \emph{cumulative} growth, which instead counts elements with weight \emph{at most} $n$. Note that the spherical growth series is rational if and only if the cumulative growth series is rational.

In this paper we focus on virtually abelian groups, and study various generalisations of the notion of growth series. In each case, the results will hold for any choice of weighted generating set.

Let $G$ be a group generated by a finite set $S$, with weight function $\omega$, and consider a subgroup $H$. Let $\sigma^\omega_{H\leq G,S}(n)$ be the number of elements of $H$ which have weight $n$ with respect to the generators of $G$. We will call this the \emph{weighted growth function of $H$ relative to $G$}. The \emph{weighted growth series of $H$ relative to $G$} is then 
\begin{equation}
\cS^\omega_{H\leq G,S}(z)=\sum_{n=0}^\infty\sigma^\omega_{H\leq G,S}(n)z^n.
\end{equation}

The asymptotic behaviour of the relative growth function $\sigma^\omega_{H\leq G,S}(n)$ has been studied in many papers (for example \cite{DavisOlshanskii}). In the present paper we consider the much less studied formal power series. In Section \ref{sec:Relative} we prove the following.
\begin{theoremalpha}[See Theorem \ref{thm:relative}]
	If $G$ is virtually abelian, and $H\leq G$ is any subgroup, then the weighted growth series of $H$ relative to $G$ is rational with respect to any finite generating set $S$ of $G$.
\end{theoremalpha}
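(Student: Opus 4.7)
The plan is to adapt Benson's proof that the ordinary weighted growth series $\cS^\omega_{G,S}(z)$ is rational so that only those elements lying in $H$ are counted. Fix a finite-index normal subgroup $A\triangleleft G$ with $A\cong\Z^n$, together with a transversal $T=\{t_1,\dots,t_k\}$ for $A$ in $G$. Benson's argument partitions $G$ into finitely many subsets $\cP_1,\dots,\cP_N$ (patterns), each contained in a single coset $t_{i(j)}A$. Under the bijection $t_{i(j)}\bs{v}\leftrightarrow\bs{v}$, each $\cP_j$ corresponds to a polyhedral set $Q_j\subseteq\Z^n$ on which the $S$-weight $\omega(t_{i(j)}\bs{v})$ agrees with a piecewise linear function of $\bs{v}$, the pieces being a polyhedral subdivision of $Q_j$; rationality of $\cS^\omega_{G,S}(z)$ then follows from a standard generating function formula for polyhedral sets with piecewise linear weight.

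First I would restrict each pattern to $H$. Since $H\cap A$ is a subgroup of the lattice $A\cong\Z^n$, it corresponds to a rational sublattice $L\leq\Z^n$. For each $j$ with $H\cap t_{i(j)}A\neq\emptyset$, fix some $t_{i(j)}\bs{v}_j\in H$; a short computation using the normality of $A$ shows that $H\cap t_{i(j)}A$ corresponds under the identification above to the affine sublattice $\bs{v}_j+L\subseteq\Z^n$, so that $H\cap\cP_j$ corresponds to $Q_j\cap(\bs{v}_j+L)$.

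The key step is to check that $Q_j\cap(\bs{v}_j+L)$ is a polyhedral subset of the affine sublattice $\bs{v}_j+L$, and that the restriction of Benson's piecewise linear weight function remains piecewise linear with respect to the restricted subdivision. This reduces to the elementary observation that the intersection of an integer half-space (or hyperplane) in $\Z^n$ with a rational affine subspace is a half-space (or hyperplane) in that subspace. Applying Benson's rationality formula, in the lower-dimensional sublattice, to each of the finitely many restricted polyhedral sets and summing gives
\begin{equation*}
\cS^\omega_{H\leq G,S}(z)=\sum_{j\,:\,H\cap t_{i(j)}A\neq\emptyset}\;\sum_{\bs{v}\in Q_j\cap(\bs{v}_j+L)}z^{\omega(t_{i(j)}\bs{v})},
\end{equation*}
a finite sum of rational functions, hence rational.

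The main obstacle is the technical verification that Benson's rationality statement, naturally phrased for polyhedral subsets of $\Z^n$ with its standard basis, transfers cleanly to polyhedral subsets of a proper affine sublattice $\bs{v}_j+L\cong\Z^m$ (for $m\leq n$). This amounts to choosing a basis for $L$, performing the corresponding change of coordinates, and noting that the shift by $\bs{v}_j$ alters each linear piece of the weight only by an additive constant (which becomes a factor of $z^c$ in the series). Once these bookkeeping issues are settled, the argument reduces entirely to Benson's framework.
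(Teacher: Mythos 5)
Your overall strategy — restrict Benson's polyhedral machinery to $H$ by intersecting with the affine sublattice coming from $H\cap\Z^n$, then re-apply rationality — is in the same spirit as the paper's, but you have shifted to a different coordinate system, and that shift hides a real gap. You describe Benson's partition as a family of polyhedral sets $Q_j\subseteq\Z^n$ (group-element coordinates, identified via the $\Z^n$-coset bijection) on which $\omega$ is a \emph{piecewise} linear function. That is not what Benson's theorem, as stated in Section~\ref{sec:polyhedral} (Theorem~\ref{thm:Upi}), actually gives you: it produces polyhedral sets $\overrightarrow{U^\pi}\subseteq\N^{m(\pi)}$ of \emph{word} vectors, where $m(\pi)=r(k+1)$ is generally much larger than $n$, and on which the weight is genuinely \emph{affine} (equation~\eqref{eq:weight}), not merely piecewise linear. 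To pass to your picture one must push forward through the affine map $\cE^\pi\colon\w\mapsto(A_i^\pi\cdot\w+B_i^\pi)_i$, which is injective on $\overrightarrow{U^\pi}$ but not globally, and one then needs that the weight descends to a piecewise-affine function of $\bs{v}=\cE^\pi(\w)$. That is a plausible and in fact true statement (one can extract it via Presburger quantifier elimination), but it is a nontrivial reformulation that you assert without proof and that is not available off the shelf in the framework set up here. The ``main obstacle'' you flag at the end (transferring rationality to a sublattice of $\Z^n$) is indeed just bookkeeping via Proposition~\ref{prop:polyaffine}; the actual missing step is the one earlier, converting Benson's statement about word vectors into a piecewise-linear weight on $\Z^n$.

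The paper's proof of Theorem~\ref{thm:relative} avoids this entirely by never leaving word coordinates. Membership of $\overline{w}$ in $H$ is read off from Lemma~\ref{lem:subgroupcriteria}: the pattern $\pi$ must lie in a distinguished set $P_H$ of cosets, and the abelian part $(A_i^\pi\cdot\w+B_i^\pi)_i$ must lie in the sublattice $H\cap\Z^n$, a condition that is encoded directly as a polyhedral set $X^\pi\subseteq\N^{m(\pi)}$ by introducing $f$ auxiliary integer coordinates for the coefficients in a basis of $H\cap\Z^n$ and projecting (Proposition~\ref{prop:polyaffine}). Intersecting $X^\pi$ with Benson's already-constructed $\overrightarrow{U^\pi}$ gives a polyhedral set of minimal-weight representatives for elements of $H$, on which the weight is affine, so Proposition~\ref{prop:polyhedralrationalgrowth} applies directly. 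If you want to keep your geometric picture, you should either prove the piecewise-linear claim or, more simply, replace it with this word-coordinate version, where the thing you restrict is already the polyhedral set of minimal representatives and the affine-ness of the weight comes for free.
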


One way to view standard growth is to consider the equivalence relation on $S^*$ where words are equivalent whenever they represent the same group element. We then choose one minimal weight word from each equivalence class, and study the growth of the resulting language. Any other equivalence relation on $S^*$ thus yields a growth function (and corresponding series) in the same way.

In Section \ref{sec:coset}, we study the growth series arising from the equivalence relation where words are considered to be equivalent whenever they represent elements of the same coset of a chosen subgroup $H$. This is known as \emph{coset growth}. If the weight of every generator is $1$, then the coset growth function counts the number of cosets intersecting the sphere of radius $n$ in the Cayley graph (that do not intersect the sphere of radius $n-1$). Coset growth has been studied for hyperbolic groups by Holt and others. See \cite{CosetAuto} for details. Here, we prove the following.
\begin{theoremalpha}[See Theorem \ref{thm:cosetfull}]
	If $G$ is virtually abelian, and $H\leq G$ is any subgroup, then the weighted growth series of the set of right cosets $H\backslash G$ is rational with respect to any finite generating set $S$ of $G$.
\end{theoremalpha}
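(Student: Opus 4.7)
The approach parallels the proof of Theorem A, with an additional quotient step to pass from elements of $H$ to right cosets of $H$. Since $G$ is virtually abelian, it contains a normal, finite-index, free abelian subgroup $A \cong \Z^d$; fix a transversal $t_1, \ldots, t_m$ of $A$ in $G$, identifying $G$ set-theoretically with $\Z^d \times \{1, \ldots, m\}$. The engine of the proof is Benson's theorem: there is a finite partition of $\Z^d \times \{1, \ldots, m\}$ into \emph{polyhedral sets} on each of which the weight $\omega(at_i)$ is a linear-plus-constant function of the coordinates of $a$. This same decomposition already underlies the rationality of $\cS^\omega_{G,S}(z)$ and of the relative growth series $\cS^\omega_{H \leq G,S}(z)$ of Theorem A.

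Next I parameterize the right cosets of $H$ in $G$. Setting $H_0 = H \cap A$, a sublattice of $\Z^d$, and letting $g_1, \ldots, g_k$ be a transversal of $HA$ in $G$, every right coset of $H$ has a unique expression of the form $H \tau g_\ell$ where $\tau$ runs through a transversal of $H_0$ in $A$. Fixing also a transversal $h_1, \ldots, h_p$ of $H_0$ in $H$, each element of the coset $H \tau g_\ell$ is uniquely of the form $h_\mu a_0 \tau g_\ell$ with $\mu \in \{1, \ldots, p\}$ and $a_0 \in H_0$. Using the semidirect-product structure of $G$ and the conjugation actions of the $h_\mu$ on $A$, the normal form of $h_\mu a_0 \tau g_\ell$ in $\Z^d \times \{1, \ldots, m\}$ is an explicit affine function of $(a_0, \tau)$, with the $\{1, \ldots, m\}$-coordinate determined by $\mu$ and $\ell$ alone.

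The plan is then to construct a polyhedral transversal $R \subseteq G$ for $H \backslash G$ consisting of minimum-weight representatives, and to apply Benson's rationality theorem to $R$. Intersecting each piece of the polyhedral decomposition with the image of the above parameterization, the weight $\omega(h_\mu a_0 \tau g_\ell)$ becomes piecewise affine in $(a_0, \tau)$. For each fixed $(\mu, \ell, \tau)$, minimizing over $a_0 \in H_0$ is a polyhedral operation whose output is piecewise affine in $\tau$; minimizing further over the finitely many $\mu$, and then breaking ties lexicographically, produces a unique minimum-weight representative in each coset. The resulting set $R$ inherits a polyhedral structure on which $\omega$ is piecewise affine, and Benson's theorem gives that $\sum_{r \in R} z^{\omega(r)}$ is rational; by construction this sum equals $\cS^\omega_{H \backslash G, S}(z)$.

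The main difficulty is the minimization over the \emph{infinite} lattice $H_0$: a priori, the minimum of a piecewise-affine function over an unbounded region need not have a polyhedral argmin set. Positivity of $\omega$ forces every unbounded direction on every piece to carry nonnegative weight-slope, so the minimum must be attained on a bounded ``inner'' polyhedron whose defining inequalities can be described uniformly in the parameter $\tau$, reducing the problem to a finite, parameter-dependent minimization per piece. Formalizing this uniformity, and verifying that lexicographic tie-breaking (a routine intersection with rational half-spaces) preserves polyhedrality, constitutes the bulk of the technical work.
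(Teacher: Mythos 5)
Your outline correctly identifies the right structure and the right reduction: pass to $H_0 = H\cap\Z^n$, parameterize cosets by a transversal of $H_0$ in $\Z^n$ and of $H\Z^n$ in $G$, and select a minimum-weight element of each coset. This is the same skeleton as the paper's proof, which first handles $F\le\Z^n$ (Theorem~\ref{thm:cosetpart}), then chooses the best of finitely many candidates coming from the cosets $(H\cap\Z^n)h_j g$ (Theorem~\ref{thm:cosetfull} via Lemma~\ref{lem:minimaltuplerep}).

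However, there is a genuine gap, and you flag it yourself: the parametric minimization over the infinite lattice $H_0$. Your argument that ``positivity of $\omega$ forces nonnegative weight-slope, so the minimum is attained on a bounded inner polyhedron describable uniformly in $\tau$'' is a heuristic, not a proof. Nonnegative slope on unbounded directions gives existence of a minimizer, but what has to be shown is that the \emph{argmin set}, as $\tau$ (and the pattern) vary, is a polyhedral subset of the parameter space --- and the weight on $G$, while piecewise affine, is not convex, so the minimizers do not automatically organize into a single bounded cell. This is precisely what Proposition~\ref{prop:VHpi} proves: it constructs a partial order $\leq_\pi$ on $W^\pi$ compatible with weight and with translation by $F$, invokes Dickson's Lemma (\ref{lem:Dickson}) to extract a finite set $\cT_{\min}$ of ``minimal translations'', and then expresses the minimal-representative set explicitly as $\N^m\setminus\bigcup_{\tau\in\cT_{\min}}(\tau+\N^m)$, which is manifestly polyhedral. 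Your proposal does not supply any construction of this kind; saying that ``formalizing this uniformity\ldots constitutes the bulk of the technical work'' is accurate, but that bulk is the theorem. (An alternative would be to invoke Presburger quantifier elimination directly --- polyhedral sets are closed under Boolean operations and affine images/preimages by Propositions~\ref{prop:polyclosed} and~\ref{prop:polyaffine}, so a first-order definition of the argmin over a polyhedral graph of $\omega$ would also yield polyhedrality --- but you do not make that argument either, and making it rigorous still requires establishing that the graph of $\omega$ on $\Z^n\times T$ is polyhedral, which needs a short derivation from Benson beyond what you cite.)

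A secondary omission: after selecting, per pattern, the weight-minimal words in each $H_0$-coset, one must eliminate overlaps between patterns (two different patterns can both realize the minimal weight for the same coset). The paper handles this in Definition~\ref{def:UHpi} and the proof of Theorem~\ref{thm:cosetpart} by showing the sets $R^{\pi,\mu}$ and $R_*^{\pi,\mu}$ are polyhedral. Your sketch mentions lexicographic tie-breaking in passing, but does not show that the tie sets themselves are polyhedral, which is a nontrivial part of the argument because the tie condition couples exponent vectors of words with two different patterns and requires introducing the auxiliary variables $a_1,\ldots,a_f$.
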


In Section \ref{sec:conjugacy} we study the growth series arising from the equivalence relation where words are considered to be equivalent whenever they represent elements of the same conjugacy class. This is known as \emph{conjugacy growth}, and has been studied for some time, for example in \cite{CoornaertKnieper} and \cite{SolvableConjugacy}. To the author's knowledge, Babenko \cite{Babenko} was the first to introduce this form of growth. For a useful overview see Guba and Sapir's paper \cite{GubaSapir}. Rivin studied the corresponding formal power series, and conjectured (\cite{Rivin1}, \cite{Rivin2}) that a hyperbolic group has rational conjugacy growth series if and only if it is virtually cyclic. One direction of this was confirmed by Ciobanu, Hermiller, Holt, and Rees in \cite{CHHR}, and the other by Antol\'in and Ciobanu in \cite{FormalConjugacyGrowth}. Recently, Gekhtman and Yang (\cite{ContractingElements}) have shown that the conjugacy growth series is transcendental for all (non-elementary) relatively hyperbolic groups, and a large class of acyclindrically hyperbolic groups, with respect to any choice of finite generating set.

Amongst other calculations, Mercier \cite{Mercier} has shown that the conjugacy growth series of the lamplighter group $C_2\wr\Z$ is transcendental with respect to a certain choice of generating set. Section 5 of \cite{CHHR} contains calculations of conjugacy growth and related series in some virtually abelian groups, and \cite{CHM} contains formulas for the conjugacy growth series of graph products.

In the present paper we prove the following.
\begin{theoremalpha}[See Theorem \ref{thm:conj}]\label{ThmC}
	If $G$ is virtually abelian, then the weighted conjugacy growth series of $G$ is rational with respect to any choice of finite generating set $S$.
\end{theoremalpha}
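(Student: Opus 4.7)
The plan is to reduce the count of $G$-conjugacy classes to an orbit count for a finite affine action on the abelian subgroup, and then to apply the polyhedral generating-function machinery underlying Benson's theorem and Theorems A and B. Let $A\triangleleft G$ be a finite-index normal abelian subgroup with $A\cong\Z^n$, and write $G=\bigsqcup_{i=1}^{k}At_i$. Since conjugation in $G$ descends to conjugation in the finite group $G/A$, the $G$-conjugacy classes partition according to the $(G/A)$-conjugacy class $C$ of their image, so it suffices to prove rationality of the sub-series indexed by each $C$. Fix a representative $tA\in C$ and let $Z\leq G$ be the preimage of the centralizer $C_{G/A}(tA)$, so that the finite group $T:=Z/A$ acts on the coset $At$ by conjugation; in the abelian coordinate, any lift $h\in Z$ of $\gamma\in T$ acts by the affine map $a\mapsto \phi_h(a)+c_h$, where $\phi_h\in\mathrm{Aut}(A)$ is conjugation by $h$ and $c_h\in A$ is defined by $hth^{-1}=c_h t$. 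A direct calculation shows that this descends to a well-defined affine action of $T$ on $A/L_t$, where $L_t:=\mathrm{Im}(\phi_t-\mathrm{Id})$, and that the $G$-conjugacy classes with image in $C$ correspond bijectively to the $T$-orbits on $A/L_t$.

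Next, define the conjugacy-minimising weight $\tilde\omega\colon A\to\N$ by
\[
\tilde\omega(a)\;=\;\min_{i,\,h}\;\omega\bigl(g_i\,h(at)h^{-1}\,g_i^{-1}\bigr),
\]
where $g_1,\dots,g_r$ are coset representatives for $Z$ in $G$ and $h$ runs over any system of lifts of the elements of $T$. Each inner expression has abelian coordinate equal to an affine function of $a$, and Benson's theorem yields that $\omega$ restricted to any coset of $A$ is piecewise-linear in the abelian coordinate with finitely many polyhedral pieces. Composing with affine maps and taking a finite minimum preserves piecewise-linearity, so $\tilde\omega$ is piecewise-linear on $\Z^n$ and is constant on $T$-orbits, hence descends to a function on $A/L_t$. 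The contribution of $C$ to the weighted conjugacy growth series is then $\sum_O z^{\tilde\omega(O)}$, summed over $T$-orbits $O$ on $A/L_t$.

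To evaluate this sum I apply the weighted form of Burnside's lemma:
\[
\sum_{O}z^{\tilde\omega(O)}\;=\;\frac{1}{|T|}\sum_{\gamma\in T}\;\sum_{[a]\in\mathrm{Fix}_{A/L_t}(\gamma)}z^{\tilde\omega([a])}.
\]
Each fixed-point set $\mathrm{Fix}_{A/L_t}(\gamma)$ lifts to a polyhedral subset of $\Z^n$ (a union of $L_t$-cosets inside an affine sublattice), and on this subset $\tilde\omega$ remains piecewise-linear. The inner sum is therefore a rational power series by the polyhedral counting arguments already established in the proofs of Theorems A and B; summing the finitely many rational contributions, dividing by $|T|$, and summing over all $(G/A)$-conjugacy classes $C$ gives the weighted conjugacy growth series as a rational function, whose coefficients are non-negative integers by construction.

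The main technical obstacle is the compatibility of the polyhedral data: one must ensure that a single polyhedral decomposition of $\Z^n$ simultaneously refines the Benson decompositions for all the weight functions $\omega_{i,h}(a):=\omega(g_i h(at)h^{-1}g_i^{-1})$ appearing in the definition of $\tilde\omega$, and behaves well upon restriction to each lifted fixed-point set $\mathrm{Fix}_{A/L_t}(\gamma)$. Once this bookkeeping — essentially a refinement of the polyhedral framework already used for Benson's theorem and the two preceding theorems — is in place, combining it with the Burnside identity to produce the rational series is routine.
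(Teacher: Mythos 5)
The reduction to $T$-orbits on $A/L_t$ is correct, but the definition of $\tilde\omega$ does not accomplish what you claim. Conjugating $at$ by an element $b\in A$ gives $b(at)b^{-1}=(a+b-\phi_t(b))t$, so the conjugacy class of $at$ contains, for each pair $(i,h)$, not a single element but an entire $L_t$-coset $g_i\bigl(\phi_h(a)+c_h+L_t\bigr)t\,g_i^{-1}$. Your finite minimum over the pairs $(i,h)$ omits the minimisation over these translates, so $\tilde\omega(a)$ is not the weight of the class $[at]$; more to the point, it is \emph{not} constant on $L_t$-cosets, since replacing $a$ by $a+\ell$ with $\ell\in L_t$ shifts the argument of $\omega$ by $\phi_h(\ell)$, which is nonzero whenever $\ell\neq 0$. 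Consequently $\tilde\omega$ does not descend to $A/L_t$, and the Burnside sum $\sum_O z^{\tilde\omega(O)}$ is not well-defined.

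If one corrects the definition to $\tilde\omega(a):=\omega([at])=\min_{i,h,\ell\in L_t}\omega\bigl(g_i(\phi_h(a)+c_h+\ell)t\,g_i^{-1}\bigr)$, it does descend to $A/L_t$, but piecewise-linearity is then no longer a consequence of ``composing with affine maps and taking a finite minimum'': the minimum over $\ell\in L_t$ is infinite, and showing that this infinite minimum is still piecewise-linear with polyhedral pieces is exactly the technical heart of the theorem. This is what the paper's coset-growth machinery---the ordering $\leq_\pi$, Dickson's Lemma, and the sets $V_F^\pi$ and $U_F^\pi$ of Section~\ref{sec:coset}---is built to supply. Your Burnside strategy is genuinely different and appealing in that it sidesteps the explicit construction of canonical class representatives, but as written it compresses the hard step into the closing ``main technical obstacle'' remark, and that step requires the full coset-growth argument (Theorem~\ref{thm:cosetpart} and its proof), not merely Benson's result on element growth.
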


In light of the results mentioned above, we venture the conjecture that the conjugacy growth series of a finitely presented group that is not virtually abelian is transcendental, with respect to all finite generating sets.

Our key tool for all of the results will be the theory of polyhedral sets developed in \cite{Benson}. These are defined to be subsets of $\Z^r$ for some $r>0$, made up of regions bounded by affine hyperplanes. Assigning a weight to each coordinate of $\Z^r$, and then extending this linearly to a weight function on the whole of $\Z^r$, we can calculate the growth series of any subset. It turns out that polyhedral sets have rational growth series (see Section \ref{sec:polyhedral} for definitions and results). Our strategy will always be to find a language (i.e. a subset of $S^*$) consisting of minimal representatives for the objects we wish to count, which is in one-to-one weight-preserving correspondence with a polyhedral set, and thus has rational growth series. The objects in question may be group elements, cosets, or conjugacy classes.

In \cite{Benson}, it is shown that there exists a language of minimal representatives for the elements of a given virtually abelian group, which is in one-to-one weight-preserving correspondence with a finite collection of polyhedral sets. Thus the (standard) growth series is a rational function.


\section{Preliminaries}\label{sec:prelim}

\subsection{Notation}\label{sec:notation}

Here we follow \cite{Benson}. Let $G$ be a finitely generated virtually abelian group. It is well known that such a group has a normal, finite-index subgroup isomorphic to $\Z^n$ for some $n$. Throughout the paper, $G$ will denote a finitely generated virtually abelian group with normal subgroup of finite index $d$, isomorphic to $\Z^n$. Choose a finite monoid generating set $S$. If $w\in S^*$, or $W\subseteq S^*$, we will write $\overline{w}$, or $\overline{W}$, to denote the element(s) of $G$ represented by the given word(s). Write $\epsilon$ for the empty word. Let $\left|w\right|_S$ denote the length of $w$ as a word in $S^*$. In this paper, $\N$ will contain zero, and we will write $\N_+=\N\setminus\{0\}$.

A function $\omega\colon S\rightarrow\N_+$ will be called a \emph{weight function}. We extend this to $\omega\colon S^*\rightarrow\N_+$, so that $\omega(s_1s_2\cdots s_l)=\omega(s_1)+\omega(s_2)+\cdots+\omega(s_l)$ for any word $s_1s_2\cdots s_l$. Define the weight of a group element as \[\omega(g)=\mathrm{min}\{\omega(w)\mid w\in S^*,~\overline{w}=g\}.\] If $\omega(s)=1$ for all $s\in S$, this gives the usual notion of word length.

Write $X:=S\cap\Z^n=\{x_1,\ldots,x_r\}$ and $Y:=S\setminus(S\cap\Z^n)=\{y_1,\ldots,y_s\}$, and call any word in $Y^*$ a \emph{pattern}.

\begin{definition}
Let $\p\colon S\rightarrow Y$ be the map \[\p\colon s_i\mapsto\begin{cases} \epsilon & \text{ if }s_i\in X\\ s_i & \text{ if }s_i\in Y \end{cases}.\] This extends to a monoid homomorphism $\p\colon S^*\rightarrow Y^*$, which records those generators in a word which are not contained in $\Z^n$. We call $\p(w)$ the \emph{pattern} of $w$.
\end{definition}

\begin{definition}
For a pattern $\pi=y_{i_1}y_{i_2}\cdots y_{i_k}$ of length $k$, define the set of $\pi$-patterned words in $S^*$ as
\begin{equation*}
W^\pi =\{x_1^{w_1}\cdots x_r^{w_r}y_{i_1}x_1^{w_{r+1}}\cdots x_r^{w_{2r}}y_{i_2}x_1^{w_{2r+1}}\cdots y_{i_k}x_1^{w_{kr+1}}\cdots x_r^{w_{kr+r}}\mid w_j\in\N\}.
\end{equation*}
\end{definition}
Any group element represented by a word with pattern $\pi$ can be represented by a word in $W^\pi$ (since powers of elements of $X$ commute). For a fixed pattern $\pi$, $W^\pi$ is in one-to-one correspondence with non-negative integer vectors of length $kr+r$. We define $m(\pi)=kr+r$. When it is clear which pattern this refers to, we will just write $m$.
\begin{definition}
Define a map $\varphi\colon W^\pi\rightarrow\N^m$ via 
\begin{align*}
x_1^{w_1}\cdots x_r^{w_r}y_{i_1}x_1^{w_{r+1}}\cdots x_r^{w_{2r}}y_{i_2} x_1^{w_{2r+1}}\cdots y_{i_k}x_1^{w_{kr+1}}\cdots x_r^{w_{kr+r}} \mapsto \begin{pmatrix}w_1 \\ w_2 \\ \vdots \\ w_m\end{pmatrix},
\end{align*}
which records the powers of the generators contained in $\Z^n$. For $w\in W^\pi$, write $\w:=\varphi(w)$, and for a subset $V\subseteq W^\pi$, write
\begin{equation*}
\overrightarrow{V}:=\varphi(V)\in\N^m.
\end{equation*}
Note that $\varphi$ is a bijection.
\end{definition}

Let $w\in W^\pi$ with the above form. Then the weight of $w$ is given by 
\begin{equation}\label{eq:weightsum}
\omega(w)=\sum_{j=0}^k\sum_{i=1}^r\omega(x_i)w_{jr+i}+\sum_{j=1}^k\omega(y_{i_j}).
\end{equation}

Let $T\subset G$ be a choice of coset representatives for $\Z^n\backslash G$. We can then express an element of $G$ uniquely as $(a_1,\ldots,a_n)^Tt$ for $a_j\in\Z$, $t\in T$. In order to pass from a patterned word to this standard form, we introduce some constants.
\begin{notation}
For each $x_i\in X$, let $x_i=(z_{1i},z_{2i},\ldots,z_{ni})^T\in\Z^n$, and define the $n\times r$ matrix \[Z:=\left[z_{ji}\right]_{1\leq j\leq n,1\leq i\leq r}=\begin{pmatrix} \vrule & \vrule & & \vrule \\ x_1 & x_2 & \cdots & x_n \\ \vrule & \vrule & & \vrule \end{pmatrix},\] which encodes the abelian part of the generating set.

We then have
\begin{equation}\label{eq:wordidentity1}
x_1^{w_1}x_2^{w_2}\cdots x_r^{w_r}= Z\begin{pmatrix}w_1\\ \vdots \\ w_r\end{pmatrix}\in\Z^n
\end{equation}
for any powers $w_i\in\N$.

Now we encode conjugation of elements of $\Z^n$ via matrix multiplication. Let $e_i\in\Z^n$ be the $i$th standard basis vector, and $y_k\in Y$. Then $y_ke_iy_k^{-1}\in\Z^n\lhd G$, and we will let $y_ke_iy_k^{-1}=(\gamma_{1i,k},\gamma_{2i,k},\ldots,\gamma_{ni,k})$ for each $1\leq i\leq n$, $y_k\in Y$. Define the $n\times n$ matrix \[\Gamma_k:=\left[\gamma_{ji,k}\right]_{1\leq i,j\leq n}=\begin{pmatrix} \horz y_ke_1y_k^{-1} \horz\\ \horz y_ke_2y_k^{-1} \horz\\ \vdots \\ \horz y_ke_ny_k^{-1} \horz \end{pmatrix}^T. \]

Then
\begin{align}
y_k\begin{pmatrix} a_1 \\ a_2 \\ \vdots \\ a_n\end{pmatrix}y_k^{-1} = \Gamma_k\begin{pmatrix} a_1 \\ a_2 \\ \vdots \\ a_n\end{pmatrix} \label{eq:wordidentity2}
\end{align}
for any $y_k\in Y$, $a_i\in\Z$.

So we can express powers of the $x_i$ generators as vectors in $\Z^n$, and we can move powers of $y_k$ past such vectors by using the identity $y_k(a_1,\ldots,a_n)=\Gamma_k(a_1,\ldots,a_n)^Ty_k$. For the word \[w=x_1^{w_1}x_2^{w_2}\cdots x_r^{w_r}y_{i_1}x_1^{w_{r+1}}\cdots x_r^{w_{2r}}y_{i_2}x_1^{w_{2r+1}}\cdots y_{i_k}x_1^{w_{kr+1}}\cdots x_r^{w_{kr+r}},\] we can use the identities \eqref{eq:wordidentity1} and \eqref{eq:wordidentity2} to move all the $y_k$ generators to the right, modifying the powers of $x_i$ as we go, without changing the element that is represented. Thus
\begin{equation}\label{eq:wbargamma}
\overline{w}=\left\lbrace Z\begin{pmatrix}w_1 \\ \vdots \\w_r\end{pmatrix}+\Gamma_{i_1}Z\begin{pmatrix}w_{r+1} \\ \vdots \\w_{2r}\end{pmatrix}+\cdots+\Gamma_{i_1}\Gamma_{i_2}\cdots\Gamma_{i_k}Z\begin{pmatrix}w_{kr+1} \\ \vdots \\ w_m\end{pmatrix}\right\rbrace\overline{\pi}
\end{equation}

\end{notation}

To express this more compactly, we introduce further notation.
\begin{definition}\label{def:Ans}
Consider the $n\times m$ matrix formed by placing the matrices $Z$, $\Gamma_{i_1}Z$, $\Gamma_{i_1}\Gamma_{i_2}Z\ldots$ next to each other. The transposes of the rows of this new matrix are $m$-dimensional vectors which we will call $A_i^\pi$ as follows:
\begin{equation*}
\begin{pmatrix} \horz  \left(A_1^{\pi}\right)^T  \horz \\ \horz  \left(A_2^{\pi}\right)^T  \horz \\ \vdots \\ \horz \left(A_n^{\pi}\right)^T \horz \end{pmatrix} :=
\left(\begin{array}{c|c|c|c|c} Z & \Gamma_{i_1}Z & \Gamma_{i_1}\Gamma_{i_2}Z & \cdots & \Gamma_{i_1}\Gamma_{i_2}\cdots\Gamma_{i_k}Z\end{array}\right).
\end{equation*}
\end{definition}
\begin{definition}\label{def:Bns}
The word $\pi$ itself represents some element of $G$, so we introduce integers $B_i^\pi$ so that we can write $\overline{\pi}$ in the standard form given by the choice of coset representatives as \[\overline{\pi}=\begin{pmatrix} B_1^\pi \\ B_2^\pi \\ \vdots \\ B_n^\pi\end{pmatrix}t_\pi,\] for $t_\pi\in T$.
\end{definition}

Now we may rewrite equation~\eqref{eq:wbargamma} using scalar products as
\begin{equation}\label{eq:wbar}
\overline{w}=\left\lbrace \begin{pmatrix}A_1^\pi\cdot \w \\ A_2^\pi\cdot \w \\ \vdots \\ A_n^\pi\cdot \w\end{pmatrix}+ \begin{pmatrix} B_1^\pi \\ B_2^\pi \\ \vdots \\ B_n^\pi\end{pmatrix}\right\rbrace t_\pi.
\end{equation}
So words in $W^\pi$ represent the same element of $G$ if and only if the scalar products of the corresponding vectors with the $A_i^\pi$s agree.
\begin{remark}\label{rem:patternsandcosets}
	We emphasise that $A_i^\pi\in\Z^m$, $B_i^\pi\in\Z$, and $t_\pi\in G$ are constant in the sense that they depend only on the pattern $\pi$. In particular, any two words with the same pattern represent elements of the same coset.
\end{remark}

\begin{definition}\label{An+1}
	For a pattern $\pi=y_{i_1}\cdots y_{i_k}$ of length $k$, let $A_{n+1}^\pi\in\N^m$ record the weights of the $x_i$ generators, ordered as follows: 
\begin{equation*}
A_{n+1}^\pi=\underbrace{\left(\omega(x_1),\omega(x_2),\ldots,\omega(x_r),\ldots,\omega(x_1),\omega(x_2),\ldots,\omega(x_r)\right)^T}_{r\text{ weights, repeated }k+1\text{ times, giving an }m\text{-dimensional vector}}.
\end{equation*}
Furthermore, let $B_{n+1}^\pi$ record the weight of the word $\pi$, i.e. \[B_{n+1}^\pi=\sum_{j=1}^k\omega(y_{i_j})\]
\end{definition}
We can then express equation \eqref{eq:weightsum} more compactly using a scalar product: \begin{equation}\label{eq:weight}
\omega(w)=A_{n+1}^\pi\cdot \w + B_{n+1}^\pi.
\end{equation}

We now have a collection of vectors $A_i^\pi$ and integers $B_i^\pi$ which together put words in $W^\pi$ into the chosen standard form.

\subsection{Structure constants for testing conjugacy}\label{sec:conjconstants}
In what follows we introduce notation that we will need to prove Theorem \ref{thm:conj}. As above, we fix a transversal $T$ for $\Z^n\backslash G$.

In a similar manner to above, we encode conjugation of an element \[\begin{pmatrix}a_1\\\vdots\\a_n\end{pmatrix}\in\Z^n\lhd G\] by some other element of $G$ via multiplication by a matrix. Let $t\in T$, and $e_i$ be the $i$th standard basis vector in $\Z^n$. Then $te_it^{-1}\in\Z^n$ by normality. Let $te_it^{-1}=(\delta_{1i,t},\delta_{2i,t},\ldots,\delta_{ni,t})^T$, and write $\Delta_t$ for the $n\times n$ matrix $[\delta_{ji,t}]_{1\leq i,j\leq n}$ whose columns are $te_it^{-1}$. Then we have \[t\begin{pmatrix}a_1\\\vdots\\a_n\end{pmatrix}t^{-1}=\Delta_t\begin{pmatrix}a_1\\\vdots\\a_n\end{pmatrix}\] for any $a_i\in\Z$.

Fix a pattern $\pi$. Recall the matrices $Z$, and $\Gamma_j$ for each element $y_j\in Y$. If \[w=x_1^{w_1}x_2^{w_2}\cdots x_r^{w_r}y_{j_1}x_1^{w_{r+1}}x_2^{w_{r+2}}\cdots x_r^{w_{2r}}y_{i_2}\cdots y_{i_k}x_1^{w_{kr+1}}\cdots x_r^{w_m}\in W^\pi,\]
consider the group element $t\overline{w}t^{-1}$ for $t\in T$. With $\Delta_t$ as defined above, equation \eqref{eq:wbargamma} yields
\begin{align}
t\overline{w}t^{-1}=  \left\lbrace \Delta_tZ\begin{pmatrix}w_1 \\ \vdots \\w_r \end{pmatrix}+\right.&\left.\Delta_t\Gamma_{j_1}Z\begin{pmatrix}w_{r+1}\\ \vdots \\w_{2r}\end{pmatrix}+\cdots\right. \nonumber\\
&\left.+~\Delta_t\Gamma_{j_1}\cdots\Gamma_{j_k}Z\begin{pmatrix} w_{kr+1}\\ \vdots \\ w_{m}\end{pmatrix}\right\rbrace t\overline{\pi}t^{-1}. \label{tconj}
\end{align}
We make definitions analogous to \ref{def:Ans} and \ref{def:Bns} above.
\begin{definition}
For each pattern $\pi$, and $t\in T$, place the matrix products in order and define $m$-dimensional vectors $A_{i,t}^\pi$ as follows
\begin{equation*}
\begin{pmatrix} \horz  \left(A_{1,t}^{\pi}\right)^T  \horz \\ \horz  \left(A_{2,t}^{\pi}\right)^T  \horz \\ \vdots \\ \horz \left(A_{n,t}^{\pi}\right)^T \horz  \end{pmatrix} :=
\left(\begin{array}{c|c|c|c} \Delta_t Z & \Delta_t\Gamma_{j_1}Z & \cdots & \Delta_t\Gamma_{j_1}\ldots\Gamma_{j_k}Z\end{array}\right).
\end{equation*}
\end{definition}
Now we can express equation \eqref{tconj} in terms of scalar products:
\begin{equation*}
t\overline{w}t^{-1} =  \begin{pmatrix} A_{1,t}^\pi\cdot \w \\ A_{2,t}^\pi\cdot\w \\ \vdots \\ A_{n,t}^\pi\cdot \w \end{pmatrix} t\overline{\pi}t^{-1}.
\end{equation*}
\begin{definition}
We have $t\overline{\pi}t^{-1}=xs$ for some $s\in T$, $x\in\Z^n$. This $x$ depends only on $\pi$ and $t$. Write $x=\left(B_{1,t}^\pi,\ldots,B_{n,t}^\pi\right)^T$.
\end{definition}
Thus we have
\begin{equation}\label{eq:wbarconj}
t\overline{w}t^{-1} = \left\lbrace\begin{pmatrix} A_{1,t}^\pi\cdot \w \\ A_{2,t}^\pi\cdot\w \\ \vdots \\ A_{n,t}^\pi\cdot \w \end{pmatrix}+\begin{pmatrix} B_{1,t}^\pi \\ B_{2,t}^\pi \\ \vdots \\ B_{n,t}^\pi\end{pmatrix}\right\rbrace s.
\end{equation}
Now we have a collection of vectors that we can use to test whether two words represent conjugate elements (provided we know that their coset representatives are conjugate). The following Lemma makes this precise.

\begin{lemma}\label{lem:detectconj}
	Let $v\in W^\pi$ and $w\in W^\mu$ for patterns $\pi$ and $\mu$, and let $t\in T$. Then $\overline{v}=t\overline{w}t^{-1}$ if and only if
	\begin{enumerate}
		\item $\overline{\pi}$ and $t\overline{\mu}t^{-1}$ are in the same $\Z^n$-coset, and
		\item $A_i^\pi\cdot \v+B_i^\pi = A_{i,t}^\mu\cdot \w + B_{i,t}^\mu$ for each $1\leq i\leq n$.
	\end{enumerate}
\end{lemma}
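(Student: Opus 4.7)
The plan is to rewrite both sides of the asserted identity $\overline{v}=t\overline{w}t^{-1}$ in the chosen standard form $(a_1,\ldots,a_n)^T t$ with $t\in T$, and then read off when the two standard forms coincide. By equation \eqref{eq:wbar},
\[
\overline{v} = \begin{pmatrix} A_1^\pi\cdot\v+B_1^\pi \\ \vdots \\ A_n^\pi\cdot\v+B_n^\pi \end{pmatrix} t_\pi,
\]
so the transversal part of $\overline{v}$ is $t_\pi$, and $\overline{v}$ lies in the same $\Z^n$-coset as $\overline{\pi}$ (cf.\ Remark~\ref{rem:patternsandcosets}). Similarly, equation \eqref{eq:wbarconj} gives
\[
t\overline{w}t^{-1} = \begin{pmatrix} A_{1,t}^\mu\cdot\w+B_{1,t}^\mu \\ \vdots \\ A_{n,t}^\mu\cdot\w+B_{n,t}^\mu \end{pmatrix} s,
\]
where $s\in T$ is determined by the equality $t\overline{\mu}t^{-1}=xs$ with $x\in\Z^n$, so $s$ is the transversal representative of the coset of $t\overline{\mu}t^{-1}$.

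Since $T$ is a transversal for $\Z^n\backslash G$, two elements of $G$ written in standard form are equal if and only if their transversal components agree \emph{and} their $\Z^n$-coefficients agree coordinatewise. Hence $\overline{v}=t\overline{w}t^{-1}$ is equivalent to the conjunction of $t_\pi=s$ with the $n$ scalar equalities $A_i^\pi\cdot\v+B_i^\pi = A_{i,t}^\mu\cdot\w+B_{i,t}^\mu$. The equality $t_\pi=s$ says precisely that $\overline{\pi}$ and $t\overline{\mu}t^{-1}$ have the same representative from $T$, which, since $T$ is a transversal, is the same as being in the same $\Z^n$-coset; this is condition~(1). The remaining $n$ scalar equalities are exactly condition~(2).

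There is no substantive obstacle: the lemma is a direct unpacking of the two displayed formulas from Sections~\ref{sec:notation} and~\ref{sec:conjconstants}. The only care required is to keep track of which transversal representative ($t_\pi$ or $s$) is attached to which side, and to invoke Remark~\ref{rem:patternsandcosets} to identify $t_\pi$ as the representative of the coset of $\overline{\pi}$ so that condition~(1) can be read as $t_\pi=s$.
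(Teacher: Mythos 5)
Your proof is correct and follows exactly the same route as the paper's: both expand $\overline{v}$ and $t\overline{w}t^{-1}$ into standard form via equations \eqref{eq:wbar} and \eqref{eq:wbarconj}, then compare the transversal component and the $\Z^n$-coordinates. The only difference is that you spell out the final "read off the standard form" step in slightly more detail; the paper's proof compresses this into "and the result follows."
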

\begin{proof}
	From equation \eqref{eq:wbar} we have \[\overline{v}=\left\{\left(A_1^\pi\cdot \w,A_2^\pi\cdot \w,\ldots,A_n^\pi\cdot \w\right)^T+\left(B_1^\pi,B_2^\pi,\ldots,B_n^\pi\right)^T\right\}t_\pi.\] From equation \eqref{eq:wbarconj} we have \[t\overline{w}t^{-1} = \left\lbrace\left(A_{1,t}^\mu\cdot \w,A_{2,t}^\mu\cdot \w,\ldots,A_{n,t}^\mu\cdot \w\right)^T+\left(B_{1,t}^\mu,B_{2,t}^\mu,\ldots,B_{n,t}^\mu\right)^T\right\rbrace s\] where $s$ is determined by the element $t\overline{\mu}t^{-1}$, and the result follows.
\end{proof}

\subsection{Reduction to finitely many patterns}\label{sec:finitepatterns}
In general, there are infinitely many patterns in $Y^*$. It will be essential for the results on growth series that we have only finitely many patterns. Extend $S$ to a new generating set
\begin{equation}\label{eq:Stilde}
\widetilde{S}=\{\overline{s_1s_2\cdots s_k}\mid s_i\in S,~1\leq k\leq d\}.
\end{equation}

For each $\tilde{s}\in\widetilde{S}$, assign the weight $\omega(\tilde{s})$, which induces a weighted length function $\wt{\omega}$ on the words in $\wt{S}^*$, and in turn on elements of $G$. This function preserves the weight of elements of $G$ (although the length will change), meaning the growth functions are equal:
\begin{equation*}
 \sum_{n\geq0}\sigma_{G,S}^\omega(n)z^n=\sum_{n\geq0}\sigma_{G,\wt{S}}^{\wt{\omega}}(n)z^n.
\end{equation*}

\begin{proposition}[Proposition 11.3 of \cite{Benson}]\label{elementary}
 Suppose $g\in G$. Then there exists a minimal weight word $w\in\wt{S}^*$, with a pattern of length at most $d$ (with respect to $\wt{S}^*$), such that $\overline{w}=g$.
\end{proposition}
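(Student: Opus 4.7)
The strategy is to argue by pigeonhole on the partial products of the pattern modulo $\Z^n$: whenever the pattern of a minimum-weight word is longer than $d$, two of its prefixes lie in the same coset, and the definition of $\widetilde{S}$ then allows the intervening block of $\widetilde{Y}$-letters to be collapsed into a single $\widetilde{X}$-generator without loss of minimality. Iterating drives the pattern length down to at most $d$.

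In detail, fix a minimum-weight word $u \in S^*$ representing $g$ and decompose it as $u = x_0 y_{i_1} x_1 y_{i_2} \cdots y_{i_k} x_k$ with each $x_l \in X^*$. Since $S \subseteq \widetilde{S}$, $u$ is also an $\widetilde{S}^*$-word with the same $\widetilde{\omega}$-weight $\omega(g)$ and pattern of length $k$. If $k \leq d$ we are done, so assume $k \geq d+1$. The partial products $[e], [y_{i_1}], \ldots, [y_{i_1} \cdots y_{i_d}]$ are $d+1$ elements of $G/\Z^n$, a group of order $d$, so by pigeonhole there exist $0 \leq a < b \leq d$ with $[y_{i_1} \cdots y_{i_a}] = [y_{i_1} \cdots y_{i_b}]$. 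Then $z := \overline{y_{i_{a+1}} \cdots y_{i_b}} \in \Z^n$, and since $z$ is represented by an $S$-word of length $b - a \leq d$, the definition \eqref{eq:Stilde} of $\widetilde{S}$ places $z$ in $\widetilde{S} \cap \Z^n$, so $z$ is available as a single letter of the new generating set. Using the conjugation identity \eqref{eq:wordidentity2} to commute the intermediate $X^*$-subwords $x_{a+1}, \ldots, x_{b-1}$ past the $y$-letters, the block $y_{i_{a+1}} x_{a+1} \cdots x_{b-1} y_{i_b}$ can be rewritten in $G$ as $z$ times a $\Z^n$-correction, yielding an $\widetilde{S}^*$-word $w'$ representing $g$ with pattern length $k - (b-a) < k$. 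Iterating produces, in finitely many steps, an $\widetilde{S}^*$-word whose pattern has length at most $d$.

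The main obstacle is verifying that each reduction step preserves minimum weight, so that the final word really is a minimum-weight representative. The estimate $\widetilde{\omega}(z) \leq \sum_{l=a+1}^b \omega(y_{i_l})$ is immediate, but the $\Z^n$-correction produced by conjugating $x_{a+1}, \ldots, x_{b-1}$ through the $y$-letters need not admit an $\widetilde{X}^*$-expression of the same $\omega$-weight letter by letter, because the matrices $\Gamma_k$ of \eqref{eq:wordidentity2} are only linear automorphisms of $\Z^n$ and do not respect $\omega$. One must therefore argue globally: the slack in the inequality above, combined with the fact that $\widetilde{\omega}(w') \geq \omega(g)$ (any representative has weight at least $\omega(g)$), forces the total weight of $w'$ to equal $\omega(g)$, and the next iteration begins from a new minimum-weight word. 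This global bookkeeping, carried out in detail in \cite{Benson}, is the technical heart of the argument.
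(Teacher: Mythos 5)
Your pigeonhole on the pattern prefixes and the idea of collapsing the loop into $\widetilde{S}$ are the right ingredients, but the reduction step you describe has a genuine gap that the closing ``global bookkeeping'' remark does not close. When you conjugate the intervening $X^*$-subwords $x_{a+1},\ldots,x_{b-1}$ past the $y$-letters, the resulting $\Z^n$-correction has the form $\Gamma_{i_{a+1}}\overline{x_{a+1}}+\Gamma_{i_{a+1}}\Gamma_{i_{a+2}}\overline{x_{a+2}}+\cdots$, and since the $\Gamma_k\in\mathrm{GL}_n(\Z)$ need not respect $\omega$, the weight of this correction is not bounded by $\sum_i\omega(x_i)$ --- it can be arbitrarily large. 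The ``slack'' $\sum_l\omega(y_{i_l})-\widetilde{\omega}(z)$ is bounded above by $\sum_l\omega(y_{i_l})$, a constant independent of the $x_i$, so it cannot absorb this blow-up; and the inequality $\widetilde{\omega}(w')\geq\omega(g)$ is a \emph{lower} bound and gives nothing toward the required upper bound $\widetilde{\omega}(w')\leq\omega(g)$. Thus $w'$ may simply fail to be a minimal-weight word, and the iteration stalls. There is also a second problem: the $\Z^n$-correction need not even lie in the sub\emph{monoid} generated by $\widetilde{X}=\widetilde{S}\cap\Z^n$. For instance, in the infinite dihedral group with monoid generating set $S=\{x,y\}$ ($y^2=1$, $yxy=x^{-1}$, so $d=2$), one has $\widetilde{X}=\{1,x,x^2\}$, which only reaches non-negative powers of $x$; to express a negative power of $x$ one must reintroduce $\widetilde{Y}$-letters, so the claimed drop in pattern length need not occur at all.

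The way to avoid both problems is to never conjugate the $X$-parts. Pigeonhole instead on the cosets $\Z^n\overline{s_1\cdots s_j}$ of the \emph{full} prefixes of a minimal-weight $S$-word, for $j=0,\ldots,d$: two coincide, producing a contiguous sub-block $s_{a+1}\cdots s_b$ of $S$-length at most $d$ that already evaluates into $\Z^n$. This block can be replaced by the single letter $\overline{s_{a+1}\cdots s_b}\in\widetilde{X}$ with no conjugation, and the definition of $\widetilde{\omega}$ gives $\widetilde{\omega}(\overline{s_{a+1}\cdots s_b})\leq\omega(s_{a+1})+\cdots+\omega(s_b)$, so weight is preserved for free. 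Setting up the iteration so that the pattern length actually falls to at most $d$ (the new $\widetilde{S}$-letter is not an $S$-letter and cannot itself be fed into a further collapse) is where the real work lies, and that is the content of the cited result in Benson.
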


This leads us to make the following definition.
\begin{definition}\label{def:finitepatterns}
	For a virtually abelian group $G$ generated by $S$, let \[P:=\{\pi\in(\wt{S}\setminus(\wt{S}\cap\Z^n))^*\mid\left|\pi\right|_{\wt{S}}\leq d\}.\] Note that $P$ is a finite set.
\end{definition}
Then the previous proposition implies that any $g\in G$ can be represented by a minimal weight word with a pattern in $P$.

\subsection{Polyhedral Sets}\label{sec:polyhedral}
The following definition and results follow Benson's work \cite{Benson}. However, the ideas appear in model theory as early as Presburger \cite{Presburger}. Results regarding rationality can be found in \cite{Denef}, and the ideas appear in the theory of Igusa local zeta functions (see \cite{Clucketal}). These last are linked to subgroup growth \cite{GSS}, a different notion of growth in groups not considered here.

\begin{definition}
Let $m\in\N_+$, and let $\cdot$ denote the Euclidean scalar product. Then for any $\boldsymbol{u}\in\Z^m$, $a\in\Z$, $b\in\N_+$:
\begin{enumerate}
\item an \emph{elementary set} is any subset of $\Z^m$ of the form $\{\boldsymbol{z}\in\Z^m\mid\boldsymbol{u}\cdot\boldsymbol{z}=a\}$, $\{\boldsymbol{z}\in\Z^m\mid\boldsymbol{u}\cdot\boldsymbol{z}>a\}$, or $\{\boldsymbol{z}\in\Z^m\mid\boldsymbol{u}\cdot\boldsymbol{z}\equiv a\mod b\}$,
\item a \emph{basic polyhedral set} is any finite intersection of elementary sets,
\item a \emph{polyhedral set} is any finite union of basic polyhedral sets.
\end{enumerate}
If $\cP\subset\Z^m$ is polyhedral and additionally $\cP\subseteq\N^m$, we call $\cP$ a \emph{positive polyhedral set}.
\end{definition}
We record some crucial facts about polyhedral sets.
\begin{proposition}[Proposition 13.1 of \cite{Benson}]\label{prop:polyclosed}
	Polyhedral sets in $\Z^m$ are closed under finite unions, finite intersections, and set complement.
\end{proposition}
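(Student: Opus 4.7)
The plan is to establish the three closure properties in the order: unions, intersections, complements, since each one leverages the previous. Closure under finite unions is immediate from the definition of a polyhedral set as a finite union of basic polyhedral sets. For finite intersections, I would use distributivity: if $\cP = \bigcup_i \cB_i$ and $\cQ = \bigcup_j \cB'_j$ with each $\cB_i$, $\cB'_j$ basic polyhedral, then $\cP \cap \cQ = \bigcup_{i,j} (\cB_i \cap \cB'_j)$. Since each $\cB_i \cap \cB'_j$ is a finite intersection of elementary sets (being a finite intersection of two such intersections), it is basic polyhedral, and the union is polyhedral.

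Closure under complement is the main content. I would first show that the complement of any elementary set is polyhedral, handling the three types separately. For the strict inequality $\cE = \{\bs{z} : \bs{u}\cdot\bs{z} > a\}$, the key observation is that $\bs{u}\cdot\bs{z} \in \Z$, so
\begin{equation*}
\Z^m \setminus \cE = \{\bs{z} : \bs{u}\cdot\bs{z} \leq a\} = \{\bs{z} : (-\bs{u})\cdot\bs{z} > -a-1\},
\end{equation*}
which is itself elementary. For the equality $\cE = \{\bs{z} : \bs{u}\cdot\bs{z} = a\}$, the complement is the union of two strict-inequality elementary sets $\{\bs{u}\cdot\bs{z} > a\}\cup\{(-\bs{u})\cdot\bs{z} > -a\}$. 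For the congruence $\cE = \{\bs{z} : \bs{u}\cdot\bs{z} \equiv a \pmod b\}$, the complement is the finite union $\bigcup_{c=0,\,c\neq a}^{b-1} \{\bs{z} : \bs{u}\cdot\bs{z} \equiv c\pmod b\}$ of $b-1$ elementary sets.

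With this in hand, I would then bootstrap up via De Morgan. For a basic polyhedral set $\cB = \bigcap_k \cE_k$, we have $\Z^m \setminus \cB = \bigcup_k (\Z^m \setminus \cE_k)$, a finite union of polyhedral sets, hence polyhedral by the union closure already established. For a general polyhedral set $\cP = \bigcup_i \cB_i$, we have $\Z^m \setminus \cP = \bigcap_i (\Z^m \setminus \cB_i)$, a finite intersection of polyhedral sets, which is polyhedral by the intersection closure established in the previous step.

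The only real subtlety — the place to be careful — is in the complement of a strict inequality, where one must use the integrality of $\bs{u}\cdot\bs{z}$ to convert $\leq a$ into the strict form $> a-1$ after negation; over $\R^m$ this move would fail and one would need a broader definition admitting non-strict inequalities. Everything else is bookkeeping with De Morgan and distributivity.
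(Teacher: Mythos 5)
Your proof is correct and is the standard argument for this closure result. The paper itself does not reprove the statement — it cites Proposition 13.1 of Benson's paper verbatim — so there is no in-text proof to compare against, but the route you take (unions trivially, intersections by distributing, complements by first handling each elementary type and then applying De Morgan) is the natural and expected one, and matches what one finds in Benson. The only cosmetic issue is in the congruence case: since $a$ is an arbitrary integer rather than a residue in $\{0,\dots,b-1\}$, the condition in your union should read $c\not\equiv a \pmod b$ rather than $c\neq a$; the argument is otherwise unaffected. Your remark that the strict-inequality complement relies on integrality of $\bs{u}\cdot\bs{z}$ is exactly the right thing to flag — it is the one place where the ambient lattice $\Z^m$ (as opposed to $\R^m$) is genuinely used.
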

\begin{proposition}[Propositions 13.7 and 13.8 of \cite{Benson}]\label{prop:polyaffine}
	Let $\cE\colon\Z^m\rightarrow\Z^{m'}$ be an integral affine transformation (for some $m,m'>0$). That is, there is some $m'\times m$ matrix with integer entries and some $q\in\Z^{m'}$ such that $\cE(p)=Ap+q$ for $p\in\Z^m$. If $\cP\subseteq\Z^m$ is a polyhedral set then $\cE(\cP)\subseteq\Z^{m'}$ is a polyhedral set. If $\cQ\subseteq\Z^{m'}$ is a polyhedral set then the preimage $\cE^{-1}(\cQ)\subseteq\Z^m$ is a polyhedral set.
\end{proposition}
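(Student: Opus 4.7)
The preimage statement (Proposition 13.8) is the easier half. Since $\cE^{-1}$ commutes with both finite unions and finite intersections, it suffices to verify that $\cE^{-1}(E)$ is elementary whenever $E$ is an elementary set. Writing $\cE(p)=Ap+q$ and computing $\u\cdot\cE(p) = (A^T\u)\cdot p + \u\cdot q$, one sees that
\[
\cE^{-1}\{\bs{z}:\u\cdot\bs{z}=a\} = \{p:(A^T\u)\cdot p = a-\u\cdot q\},
\]
and analogously for the strict inequality and congruence types (the modulus $b$ is preserved in the latter case). Since $A^T\u\in\Z^m$ and $a-\u\cdot q\in\Z$, each right-hand side is again an elementary set in $\Z^m$. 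Taking finite intersections and then finite unions yields the full statement.

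For the image statement (Proposition 13.7), $\cE(\cdot)$ only commutes with unions, so I would first reduce to the case where $\cP$ is a single basic polyhedral set. The key move is the graph trick: form
\[
\Gamma = \{(p,\bs{z})\in\Z^m\times\Z^{m'}:\bs{z}=Ap+q\},
\]
which is polyhedral as the intersection of $m'$ elementary equality sets. The product $\cP\times\Z^{m'}$ is polyhedral because it is the preimage of $\cP$ under the coordinate projection $\Z^{m+m'}\to\Z^m$, now handled by the preimage case. Hence $\Gamma\cap(\cP\times\Z^{m'})$ is polyhedral by Proposition \ref{prop:polyclosed}, and $\cE(\cP)$ is exactly its image under the coordinate projection $\rho\colon\Z^{m+m'}\to\Z^{m'}$ forgetting the first $m$ coordinates. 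The problem therefore reduces to showing that coordinate projections preserve polyhedrality, and by induction on the number of coordinates projected out, to the case of projecting out a single variable.

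The heart of the argument is then this single-variable projection. Given a basic polyhedral set $\cP'\subset\Z^{k+1}$ with coordinates $(z_0,z_1,\ldots,z_k)$, I would sort its defining elementary conditions by how they involve $z_0$: equalities $\alpha_i z_0 = L_i(z_1,\ldots,z_k)$, strict inequalities (after sign normalisation, into lower bounds $\beta_j z_0 > M_j$ and upper bounds $\beta_j' z_0 < M_j'$ with positive $\beta$'s), and congruences $\gamma_\ell z_0 \equiv N_\ell \pmod{b_\ell}$, where each $L_i,M_j,M_j',N_\ell$ is an integer affine form in $z_1,\ldots,z_k$. Projecting out $z_0$ is then Presburger-style quantifier elimination: if some $\alpha_i\neq 0$, impose the congruence $L_i\equiv 0\pmod{\alpha_i}$ on the remaining variables and substitute $z_0=L_i/\alpha_i$ into all other conditions; otherwise, scale the inequality coefficients to a common positive value via LCMs, require every lower bound to lie strictly below every upper bound, and case-split on the residue of $z_0$ modulo the LCM of the $b_\ell$ to check compatibility of the congruences with the resulting interval. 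Each branch yields a finite Boolean combination of elementary conditions on $(z_1,\ldots,z_k)$, so $\rho(\cP')$ is polyhedral.

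The main obstacle is precisely this single-variable elimination: one must simultaneously manage equalities, two-sided inequalities, and congruences, and then repackage the output using only the three rigid elementary-set formats permitted by the definition. Everything else---the graph trick, the preimage reduction, and the inductive reduction of projections to single-variable projections---becomes straightforward bookkeeping once this combinatorial step is in place.
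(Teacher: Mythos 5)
The paper does not prove this proposition; it cites it directly from Benson, so there is no in-paper argument to compare against. Your blind proof does, however, follow the standard Presburger-arithmetic route on which Benson's theory of polyhedral sets is built (the paper itself notes in Section~\ref{sec:polyhedral} that these ideas go back to Presburger), and the structure is sound. The preimage half is indeed just the observation that $\u\cdot\cE(p)=(A^T\u)\cdot p + \u\cdot q$, so each of the three elementary-set types pulls back to a set of the same type; and the image half reduces cleanly, via the graph $\Gamma$ together with Proposition~\ref{prop:polyclosed} and the preimage half, to the assertion that coordinate projections preserve polyhedrality, which you then attack by eliminating one variable at a time from a basic polyhedral set.

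The one imprecision worth flagging is in the quantifier-elimination step you rightly single out as the crux. In the no-equality branch, after scaling the $z_0$-coefficients of the inequalities to a common positive $\beta$, the case-split must be taken modulo a common multiple of $\beta$ and the congruence moduli $b_\ell$, not merely the lcm of the $b_\ell$: the integrality of $z_0$ itself forces $\beta z_0$ into a residue class mod $\beta$, and this interacts both with the congruence conditions and with the requirement that the open interval between $\max_j M_j$ and $\min_{j'} M'_{j'}$ actually contain a point of the resulting arithmetic progression. Encoding that nonemptiness using only the three permitted elementary-set formats, applied to the affine forms $M_j$ and $M'_{j'}$ in the remaining variables, is exactly the content of Presburger elimination and is where the real work lives. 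Your outline is the right one, but that step needs to be carried out in full rather than gestured at, since it is the only non-routine part of the whole proposition.
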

We note that projection onto any subset of the coordinates of $\Z^m$ is an integral affine transformation.

Let $\cP\subseteq\N^m$ be a positive polyhedral set. Given some choice of weights $(\omega_1,\ldots,\omega_m)$ for the coordinates of $\N^m$, we assign the weight $\sum_{i=1}^m a_i\omega_i$ to the vector $(a_1,\ldots,a_m)^T\in \cP$. Define \[\sigma^\omega_\cP(n)=\#\{p\in\cP\mid\omega(p)=n\},\] and the resulting weighted growth series \[\cS^\omega_\cP(z)=\sum_{n=0}^\infty\sigma^\omega_\cP(n)z^n.\]
\begin{proposition}[Proposition 14.1 of \cite{Benson}]\label{prop:polyhedralrationalgrowth}
	 If $\cP$ is a positive polyhedral set, the weighted growth series $\cS^\omega_\cP(z)$ is a rational function.
\end{proposition}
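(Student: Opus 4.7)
The plan is to reduce rationality of $\cS^\omega_\cP(z)$ to the classical rationality of the Hilbert series of an affine semigroup by stripping off the defining elementary conditions one type at a time. First, Proposition~\ref{prop:polyclosed} makes positive polyhedral sets a Boolean algebra, so inclusion-exclusion writes $\cS^\omega_\cP(z)$ as a $\Z$-linear combination of weighted growth series of finite intersections of elementary sets; since rationality is preserved by finite sums and differences, it suffices to handle the case where $\cP$ is a basic positive polyhedral set, i.e.\ cut out by a finite conjunction of elementary conditions.

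Next I would eliminate the congruences and then the strict inequalities. A finite conjunction of congruences $u_j\cdot z\equiv a_j\pmod{b_j}$ cuts out either the empty set or a coset $v+L$ of a finite-index sublattice $L=B\Z^m\leq\Z^m$. Decomposing $\N^m$ into the (finitely many) cosets of $L$ it meets, and parameterising each intersection $(v+L)\cap\N^m$ by an integer affine bijection from a positive polyhedral set in $\N^{m'}$ (splitting into further pieces if the pulled-back weight function would otherwise have non-positive coefficients on some variable), reduces the problem to one on $\N^{m'}$ involving only linear equations and strict inequalities. Each strict inequality $u\cdot z>a$ is then absorbed by introducing a slack coordinate $s\in\N$ of weight $0$ and imposing $u\cdot z-s=a+1$: the map $z\mapsto(z,\,u\cdot z-a-1)$ is a weight-preserving bijection of $\cP$ with a subset of $\N^{m''}$ cut out by linear equations alone.

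What remains, and is the principal obstacle, is to show that for any integer matrix $A$ and vector $b$ the set $X=\{x\in\N^{m''}:Ax=b\}$ has rational weighted growth series. This is the rationality of the Hilbert series of an affine semigroup module and ultimately rests on Gordan's lemma, namely that every rational polyhedral cone has a finite Hilbert basis. Concretely, one writes $X$ as a finite disjoint union of translates $v_i+\sum_{j\in J_i}\N w_{ij}$ with linearly independent $w_{ij}\in\ker A\cap\N^{m''}$ via a Stanley decomposition, and each piece contributes the rational summand $z^{\omega(v_i)}\big/\prod_{j\in J_i}\bigl(1-z^{\omega(w_{ij})}\bigr)$ to $\cS^\omega_X(z)$. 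The preceding reductions are essentially bookkeeping, but the congruence step requires genuine care to ensure that the pulled-back weight function remains a positive integer form on each new set of variables.
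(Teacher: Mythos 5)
The paper does not prove this proposition itself: it cites it as Proposition~14.1 of \cite{Benson} and uses it as a black box. Your outline is a correct reconstruction of the underlying argument, and it follows essentially the same reduction strategy as Benson's original Section~14: use closure of polyhedral sets under Boolean operations and inclusion--exclusion to reduce to basic sets, eliminate congruences by restricting to cosets of a finite-index sublattice and re-parameterising, eliminate strict inequalities by slack variables, and then invoke rationality of the generating function for lattice points of a rational polyhedron satisfying linear equations (Gordan's lemma together with a Stanley-type decomposition). The one step you flag as delicate is indeed the delicate one: after pulling back along the lattice parameterisation the weight form may acquire negative coefficients, so one must further decompose by orthant and check that the extreme-ray generators produced by the Hilbert-basis argument still have strictly positive weight (which they do, because their images are nonzero vectors in $\N^m$ and every original coordinate carries positive weight); you acknowledge this rather than silently eliding it, so there is no gap.
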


Benson's result that virtually abelian groups have rational weighted growth series with respect to all generating sets follows from the following Theorem.
\begin{theorem}[Theorem 1.2 of \cite{Benson}]\label{thm:Upi}
	Let $G$ be a virtually abelian group, with any choice of finite weighted generating set $S$. For each pattern $\pi\in P$, with $P$ as in definition \ref{def:finitepatterns}, there exists a set $U^\pi\subset\wt{S}^*$ such that $\overrightarrow{U^\pi}\subset\N^{m(\pi)}$ is polyhedral, and the disjoint union $\bigcup_{\pi\in P}U^\pi$ forms a language of unique minimal-weight representatives for the elements of $G$.
\end{theorem}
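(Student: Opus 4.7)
The plan is to construct $U^\pi$ in two stages. For each $\pi\in P$, first let $V^\pi\subseteq W^\pi$ denote the set of $\pi$-patterned words of minimal weight among all words in $\wt{S}^*$ representing the same group element; Proposition~\ref{elementary} guarantees that every $g\in G$ admits at least one representative in some $V^\pi$, so $\bigcup_{\pi\in P}V^\pi$ is a language of (possibly non-unique) minimal-weight representatives. To cut down to unique representatives, I would fix a total order on the finite set $P$ and the lexicographic order on each $\N^{m(\pi)}$, and take $U^\pi\subseteq V^\pi$ to be those $w$ for which $\ol w$ is not represented by any word in $\bigcup_{\mu\prec\pi}U^\mu$ and for which $\w$ is lexicographically smallest in its fibre $\varphi\bigl(\{v\in V^\pi:\ol v=\ol w\}\bigr)$.

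The main technical step is to verify that $\overrightarrow{V^\pi}$ is polyhedral. A word $w\in W^\pi$ fails to lie in $V^\pi$ precisely when some $\mu\in P$ and $v\in W^\mu$ satisfy both $\ol v=\ol w$ (captured via~\eqref{eq:wbar} by the coset equality $t_\pi=t_\mu$ together with the $n$ scalar equations $A_i^\pi\cdot\w+B_i^\pi=A_i^\mu\cdot\v+B_i^\mu$) and $\omega(v)<\omega(w)$ (captured via~\eqref{eq:weight} by the strict inequality $A_{n+1}^\mu\cdot\v+B_{n+1}^\mu<A_{n+1}^\pi\cdot\w+B_{n+1}^\pi$). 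For each fixed $\mu$ these finitely many elementary conditions cut out a basic polyhedral subset of $\N^{m(\pi)}\times\N^{m(\mu)}$, whose image under projection to the first factor is polyhedral by Proposition~\ref{prop:polyaffine}. Finiteness of $P$ lets me take the finite union over $\mu$; complementing in $\N^{m(\pi)}$ via Proposition~\ref{prop:polyclosed} yields $\overrightarrow{V^\pi}$.

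The deduplication stage is handled by the same projection-and-complement technique. Each condition of the form ``$\ol w\in\ol{U^\mu}$'' (for $\mu\prec\pi$) is the projection to the first factor of the intersection of the polyhedral preimage of $\overrightarrow{U^\mu}$ (under the second-factor projection, preimage part of Proposition~\ref{prop:polyaffine}) with the equality conditions of~\eqref{eq:wbar}; I then intersect its complement with $\overrightarrow{V^\pi}$. Likewise the condition ``$\w$ is not lex-minimal in its $\varphi$-fibre'' decomposes as a finite union over coordinates $j$ of polyhedral conditions asserting that some $\v\in\overrightarrow{V^\pi}$ with the same scalar products against the $A_i^\pi$ agrees with $\w$ on the first $j-1$ coordinates and is strictly smaller at coordinate $j$; projecting and complementing again preserves polyhedrality. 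The main obstacle is controlling the ``there exists a competing equivalent word'' quantifier without breaking polyhedrality; this is resolved entirely by the finiteness of $P$ guaranteed by Proposition~\ref{elementary} and Definition~\ref{def:finitepatterns}, with the remaining bookkeeping delegated to the closure properties in Propositions~\ref{prop:polyclosed} and~\ref{prop:polyaffine}.
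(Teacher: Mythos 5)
Your argument is correct, but it reaches the conclusion by a genuinely different route from the paper. The paper cites this result from Benson without reproving it, but its proof of the coset generalization --- Proposition~\ref{prop:VHpi} and Theorem~\ref{thm:cosetpart}, of which Theorem~\ref{thm:Upi} is the $F=\{1\}$ specialization --- follows Benson: define an explicit weight-refining partial order $\leq_\pi$ on $W^\pi$ that well-orders each fibre of $w\mapsto\ol w$, then show the set of $\leq_\pi$-minimal vectors is polyhedral via a ``translations'' argument, using Dickson's Lemma~\ref{lem:Dickson} to reduce to finitely many improving shifts and exhibiting the non-minimal vectors as a finite union of translated orthants. You instead express ``$w$ fails to be weight-minimal'' (and, in the deduplication stage, ``$\w$ fails to be lex-minimal in its fibre'' and ``$\ol w$ is already covered by an earlier pattern'') as existentials over an auxiliary copy of $\N^{m(\mu)}$, then invoke closure of polyhedral sets under affine images and preimages and under Boolean operations (Propositions~\ref{prop:polyaffine} and~\ref{prop:polyclosed}). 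This bypasses the bespoke order and Dickson's Lemma entirely and treats both minimality and uniqueness by one uniform projection-and-complement mechanism, which is cleaner once those closure properties are on the table; the paper's translation argument hews more closely to Benson's original exposition and adapts more transparently to the coset setting of Section~4, where the translations become shifts by the rank-$f$ lattice $F$.
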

Since the $x_i$ generators in $U^\pi$ correspond to the coordinates of $\N^{m(\pi)}$, and the contribution from the $y_k$ generators is constant within $U^\pi$, rational growth of $\overrightarrow{U^\pi}$ (in the sense of Proposition \ref{prop:polyhedralrationalgrowth}) implies rational growth of $U^\pi$ with respect to $\widetilde{S}$, and hence with respect to $S$.

We will need the following Lemma concerning polyhedral sets.
\begin{lemma}\label{lem:polyhedralcover}
	Let $\cP$ be a polyhedral subset of $\Z^m$ for some $m\geq1$. Suppose there exist polyhedral sets $X_1,\ldots,X_k\subset\Z^m$ such that $\cP\subseteq\bigcup_{i=1}^k X_i$. Then there exist polyhedral sets $Y_i\subseteq X_i$ for each $i$ such that $Y_i\cap Y_j=\emptyset$ for $i\neq j$ and $\cP=\bigcup_{i=1}^k Y_i$.
\end{lemma}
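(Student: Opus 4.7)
The plan is to use a standard disjointification construction together with the closure properties of polyhedral sets recorded in Proposition \ref{prop:polyclosed}. Since polyhedral subsets of $\Z^m$ form a Boolean algebra (closed under finite unions, finite intersections, and complementation), any set-theoretic expression built from $\cP$ and the $X_i$ using these operations is automatically polyhedral, so the work is entirely combinatorial.

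Concretely, I would define
\begin{equation*}
Y_1 := \cP\cap X_1, \qquad Y_i := \cP\cap X_i \cap \bigl(\Z^m\setminus(X_1\cup\cdots\cup X_{i-1})\bigr) \quad\text{for } 2\leq i\leq k.
\end{equation*}
Each $Y_i$ is a finite intersection of polyhedral sets (using that the complement of the polyhedral set $X_1\cup\cdots\cup X_{i-1}$ is polyhedral by Proposition \ref{prop:polyclosed}), so $Y_i$ is polyhedral. By construction $Y_i\subseteq X_i$.

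For disjointness, if $i<j$ then $Y_i\subseteq X_i\subseteq X_1\cup\cdots\cup X_{j-1}$, while $Y_j$ is disjoint from $X_1\cup\cdots\cup X_{j-1}$; hence $Y_i\cap Y_j=\emptyset$. For the covering property, clearly $\bigcup_{i=1}^k Y_i\subseteq\cP$. Conversely, take $p\in\cP$; since $\cP\subseteq\bigcup_{i=1}^k X_i$, there is a least index $i$ with $p\in X_i$, and then $p\in\cP\cap X_i$ and $p\notin X_1\cup\cdots\cup X_{i-1}$, so $p\in Y_i$. Thus $\cP=\bigcup_{i=1}^k Y_i$.

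There is no real obstacle here; the only thing to be careful about is invoking Proposition \ref{prop:polyclosed} at the right point to conclude that the complement (and hence the set-theoretic difference) of polyhedral sets is polyhedral, which is what legitimises the inductive subtraction step.
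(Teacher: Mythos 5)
Your proof is correct and uses essentially the same disjointification idea as the paper: you write the formula $Y_i = \cP\cap X_i\setminus(X_1\cup\cdots\cup X_{i-1})$ explicitly, whereas the paper arrives at the same decomposition by induction on $k$, setting $\cQ=\cP\cap\bigcup_{i=1}^k X_i$ and $Y_{k+1}=\cP\setminus\cQ$. Both hinge on the same closure properties from Proposition \ref{prop:polyclosed}, so the two arguments are interchangeable.
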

\begin{proof}
	We induct on $k$. Consider $k=1$, i.e. $\cP\subseteq X_1$. Let $Y_1=\cP\cap X_1$, which is polyhedral as an intersection of polyhedral sets. In other words $\cP=Y_1$.
	
	Now assume the statement is true for some $k>1$. Let $\cP$ be some polyhedral set, with polyhedral sets $X_1,\ldots,X_{k+1}$ such that $\cP\subseteq\bigcup_{i=1}^{k+1}X_i$. Consider the polyhedral set \[\cQ:=\cP\cap\bigcup_{i=1}^k X_i.\] Now since $\cQ\subseteq\bigcup_{i=1}^k X_i$, the inductive hypothesis gives polyhedral sets $Y_i\subseteq X_i$ for $1\leq i\leq k$ such that $Y_i\cap Y_j=\emptyset$ for each $i\neq j$, $1\leq i,j\leq k$, and \[\cQ=\bigcup_{i=1}^k Y_i.\] Now let $Y_{k+1}=\cP\setminus\cQ$, also a polyhedral set. We have $Y_{k+1}\cap\cQ=\emptyset$, and $Y_{k+1}\subseteq X_{k+1}$ (since the $X_i$s cover $\cP$ and $Y_{k+1}$ does not intersect $\cQ$), and by definition \[\cP=\cQ\cup Y_{k+1}=\bigcup_{i=1}^{k+1}Y_i.\] So the statement holds for $k+1$.
\end{proof}

\subsection{$N$-fold patterns}\label{sec:N-foldpatterns}
We develop a framework for dealing with $N$-tuples of patterned words for some finite $N$.
\begin{definition}
Let $Q$ be any set of patterns (for a virtually abelian group with some choice of finite generating set). Let $N$ be a positive integer. An \emph{$N$-fold pattern} will be an $N$-tuple of patterns from $Q$. We will write $\bs{\pi}=(\pi_1,\pi_2,\ldots,\pi_N)\in Q^N$.

Given an $N$-fold pattern $\bs{\pi}=(\pi_1,\pi_2,\ldots,\pi_N)$, write $W^{\bs{\pi}}$ for the set of $N$-tuples of words with patterns given by $\bs{\pi}$. More precisely,
\begin{equation*}
W^{\bs{\pi}}=\left\{\left(w^{(1)},w^{(2)},\ldots,w^{(N)}\right)\,\middle\vert\, w^{(i)}\in W^{\pi_i},~1\leq i\leq N \right\}
\end{equation*}
Let $m(\bs{\pi})=\sum_i m(\pi_i)$. As above, we extract the powers of the $x_i$ generators with respect to each pattern and note that elements of $W^{\bs{\pi}}$ are in one-to-one correspondence with vectors in $\N^{m(\bs{\pi})}$ via \[\left(w^{(1)},\ldots,w^{(N)}\right)\mapsto\begin{pmatrix}\overrightarrow{w^{(1)}} \\ \vdots \\ \overrightarrow{w^{(N)}}\end{pmatrix}.\]
\end{definition}

In the following Lemma, we show that if we have a polyhedral set of $N$-tuples in $\N^{m(\bs{\pi})}$, we can extract a minimal-weight element of each tuple, in a manner that preserves rational growth.
\begin{lemma}\label{lem:minimaltuplerep}
	Let $G$ be virtually abelian, generated by $S$, with weight function $\omega$. Fix some $N$-fold pattern $\bs{\pi}=(\pi_1,\pi_2,\ldots,\pi_N)$. Let $V\subset W^{\bs{\pi}}$. If $\overrightarrow{V}$ is a polyhedral set then there exists a language $\cL\subset S^*$ in one-to-one correspondence with $V$ such that for each $(v_1,\ldots,v_N)\in V$, $\cL$ contains exactly one element $w\in\{v_1,\ldots,v_N\}$, with $\omega(w)\leq\omega(v_i)$ for each $1\leq i\leq N$, and such that $\cL$ has rational weighted growth series.
\end{lemma}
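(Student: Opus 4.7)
The plan is to exploit the affine dependence of weight on coordinates recorded in equation~\eqref{eq:weight}: for $w\in W^{\pi_i}$, we have $\omega(w)=A_{n+1}^{\pi_i}\cdot\w+B_{n+1}^{\pi_i}$. Consequently, the condition that the $i$-th component of a tuple is of minimum weight becomes a conjunction of linear inequalities with integer coefficients in the coordinates of $\N^{m(\bs\pi)}$, and so cuts out a polyhedral subset.

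Concretely, for each $i\in\{1,\ldots,N\}$ I would define
\[
V_i = \overrightarrow{V}\cap\bigcap_{j\neq i}\bigl\{z\in\N^{m(\bs\pi)}:A_{n+1}^{\pi_i}\cdot z^{(i)}+B_{n+1}^{\pi_i}\leq A_{n+1}^{\pi_j}\cdot z^{(j)}+B_{n+1}^{\pi_j}\bigr\},
\]
where $z^{(k)}$ denotes the block of coordinates corresponding to the pattern $\pi_k$. Each $V_i$ is polyhedral, and the $V_i$ cover $\overrightarrow{V}$, so Lemma~\ref{lem:polyhedralcover} refines this to a disjoint polyhedral cover $\overrightarrow{V}=\bigsqcup_{i=1}^N Y_i$ with $Y_i\subseteq V_i$. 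For each $i$, the coordinate projection $\N^{m(\bs\pi)}\to\N^{m(\pi_i)}$ that retains only the $i$-th block is an integral affine map, so by Proposition~\ref{prop:polyaffine} the image of $Y_i$ is a polyhedral set corresponding via $\varphi^{-1}$ to a collection $L_i\subseteq W^{\pi_i}\subseteq S^*$ of chosen representatives. Setting $\cL=\bigcup_{i=1}^N L_i$, and noting that distinct patterns yield disjoint subsets of $S^*$, this union is disjoint. Applying Proposition~\ref{prop:polyhedralrationalgrowth} with the weight on $\N^{m(\pi_i)}$ given by the positive vector $A_{n+1}^{\pi_i}$ and the constant offset $B_{n+1}^{\pi_i}$ yields rational weighted growth of each $L_i$, and hence of $\cL$.

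The main obstacle I anticipate concerns the one-to-one correspondence between $V$ and $\cL$. The selection map $(v_1,\ldots,v_N)\in Y_i\mapsto v_i$ can fail to be injective when two distinct tuples in $Y_i$ happen to share their $i$-th coordinate block. Resolving this requires a further polyhedral refinement of each $Y_i$: the locus of tuples in collision is cut out by equations on the non-$i$ blocks, and the offending pieces can be reassigned, via Lemma~\ref{lem:polyhedralcover}, to alternative indices $j$ for which the $j$-th component is also of minimum weight (tie-breaking within the weak inequalities defining $V_i$ leaves room for such reassignment). Since polyhedral sets are closed under intersection, union, and complement (Proposition~\ref{prop:polyclosed}) and projections preserve the polyhedral property (Proposition~\ref{prop:polyaffine}), the rational-growth argument above is robust under this refinement, yielding the desired language $\cL$.
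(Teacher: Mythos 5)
Your construction of the sets $V_i$, their coverage of $\overrightarrow{V}$, the refinement via Lemma~\ref{lem:polyhedralcover} into a disjoint polyhedral cover, and the projection onto single blocks is precisely the argument given in the paper (the paper's $X_j(\bs\pi)$ and $D^{\pi_k}$ are your $V_i$ written with explicit block vectors, and $\rho_j(\overrightarrow{Y_j})$ is your $L_i$). So the core of your proposal is correct and matches the paper.

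Where you diverge is in the final paragraph, and this is worth discussing carefully. You correctly flag that the selection map $(v_1,\ldots,v_N)\mapsto v_i$ restricted to a block $Y_i$ need not be injective; the paper does not address this point explicitly either, asserting without argument that the union ``corresponds to a language of unique, minimal-weight representatives for $V$.'' However, your proposed fix does not work in general. Consider two tuples $(a,b)$ and $(a,c)$ in $V$ with $\omega(a)<\omega(b)$ and $\omega(a)<\omega(c)$. Both must be assigned to the first block, both project to $a$, and no reassignment to the second block is available because $b$ and $c$ fail the required minimality inequality. More than that: for this $V$ the conclusion of the Lemma is simply unattainable, since any $\cL$ satisfying the stated conditions would have to contain $a$ as the representative of \emph{both} tuples, contradicting the one-to-one correspondence. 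So no amount of polyhedral bookkeeping will repair it; the Lemma as stated tacitly assumes an extra property of $V$. What makes the Lemma safe in this paper is that in every application — the sets $C(\bs\pi)$, $C'(\bs\pi)$, and $V(\bs\pi)$ — each component $w^{(j)}$ is drawn from a family of \emph{unique} minimal representatives (of elements, or of cosets of a fixed subgroup) and moreover any single component determines the entire tuple via the defining constraints, so two distinct tuples can never share a component and collisions cannot occur. If you want a clean statement, add to the hypotheses that the $N$ coordinate projections $W^{\bs\pi}\to W^{\pi_j}$ are each injective on $V$; with that in place your argument (and the paper's) goes through without the repair step, and the disjointness of the $\overrightarrow{Y_j}$ together with that injectivity gives the bijection $V\leftrightarrow\cL$ directly.
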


\begin{proof}
	Let \[X_j(\bs{\pi})=\left\{\left(w^{(1)},\ldots,w^{(N)}\right)\in W^{\bs{\pi}}\,\middle\vert\,\omega\left(w^{(j)}\right)\leq\omega\left(w^{(k)}\right),~1\leq k\leq N\right\}\] be those elements in $W^{\bs{\pi}}$ where the $j$th component word has minimal weight. For each $\pi_k$, define the $m(\bs{\pi})$-dimensional vector $D^{\pi_k}$ as follows:
\begin{equation*}
D^{\pi_k}:=\begin{pmatrix} 0 \\ \vdots \\ 0 \\ A_{n+1}^{\pi_k} \\ 0 \\ \vdots \\ 0 \end{pmatrix}
\begin{tabular}{l}
$\left.\lefteqn{\phantom{\begin{matrix} 0 \\ \vdots \\ 0 \end{matrix}}}\right\}$ \small{$\sum_{i=1}^{k-1}m(\pi_i)$ zeros}\\
$\lefteqn{\phantom{\begin{matrix} A_{n+1}^{\pi_j} \end{matrix}}}$ \small{$m(\pi_k)$ rows} \\
$\left.\lefteqn{\phantom{\begin{matrix} 0 \\ \vdots \\ 0 \end{matrix}}}\right\}$ \small{$\sum_{i=k+1}^{N}m(\pi_i)$ zeros} \\
\end{tabular}
\end{equation*}
Then for some $N$-tuple of words $\left(w^{(1)},\ldots,w^{(N)}\right)\in W^{\bs{\pi}}$, whose corresponding vector is $\vec{z}\in\N^{m(\bs{\pi})}$, we have $D^{\pi_k}\cdot\vec{z}=A_{n+1}^{\pi_k}\cdot\overrightarrow{w^{(k)}}$.
	
	For $w\in W^{\pi_j}$ and $w'\in W^{\pi_k}$, equation \eqref{eq:weight} implies that \[\omega(w)\leq\omega(w')\Leftrightarrow A_{n+1}^{\pi_j}\cdot \w+B_{n+1}^{\pi_j}\leq A_{n+1}^{\pi_k}\cdot \vec{w'}+B_{n+1}^{\pi_k}.\] Therefore we have 
	\begin{align*}\overrightarrow{X_j(\bs{\pi})}&=\bigcap_{\substack{1\leq k\leq N \\ k\neq j}} \left\lbrace \vec{z}\in\N^{m(\bs{\pi})} \mid D^{\pi_j}\cdot \vec{z} +B_{n+1}^{\pi_j}  \leq D^{\pi_k}\cdot \vec{z} +B_{n+1}^{\pi_k}\right\rbrace \\
	&=\bigcap_{\substack{1\leq k\leq N \\ k\neq j}} \left\lbrace \vec{z}\in\N^{m(\bs{\pi})} \mid \left(D^{\pi_j}-D^{\pi_k}\right)\cdot \vec{z}   \leq B_{n+1}^{\pi_k} - B_{n+1}^{\pi_j}\right\rbrace.
	\end{align*}
	Thus each $\overrightarrow{X_j(\bs{\pi})}$ is a polyhedral subset of $\N^{m(\bs{\pi})}$ (since it is a finite intersection of elementary sets). Since every $N$-tuple in $V$ must have at least one minimal-weight element, we have $\overrightarrow{V}\subseteq\bigcup_{j=1}^N \overrightarrow{X_j(\bs{\pi})}$. Lemma \ref{lem:polyhedralcover} then gives us polyhedral sets $\overrightarrow{Y_j(\bs{\pi})}\subseteq \overrightarrow{X_j(\bs{\pi})}$ for each $j$ which are pairwise disjoint, and such that $\overrightarrow{V}=\bigcup_{j=1}^N \overrightarrow{Y_j(\bs{\pi})}$.
	
	Let $\rho_j\colon\N^{m(\bs{\pi})}\rightarrow\N^{m(\pi_j)}$ be the projection onto just the coordinates corresponding to the $j$th component of the $N$-tuple. Since images of polyhedral sets under projections are still polyhedral by Proposition \ref{prop:polyaffine}, we have $N$ polyhedral sets of the form $\rho_j(\overrightarrow{Y_j(\bs{\pi})})$, each corresponding to a collection of words which are minimal-weight representatives for their $N$-tuple, and each growing rationally with respect to the generators of $G$ (by Proposition \ref{prop:polyhedralrationalgrowth}). Since the $\overrightarrow{Y_j}$s are disjoint, and cover $\overrightarrow{V}$, the union of these polyhedral sets corresponds to a language of unique, minimal-weight representatives for $V$, which grows rationally.
\end{proof}


\section{Relative growth series}\label{sec:Relative}

Given a group $G$ with finite generating set $S$, consider a subset $H\subset G$. One can construct the relative (weighted) growth series of $H$ with respect to the generators of $G$ by defining $\sigma(n)$ to be the number of elements of $H$ with weight $n$. We prove that for any \emph{subgroup} of a virtually abelian group, the relative weighted growth series is rational (for any finite generating set and positive integer weight function).

We do this by showing that we can use linear algebra to test whether or not a word in $S^*$ represents an element of $H$, and thus describe the language of words whose image in $G$ is contained in $H$ in terms of polyhedral sets. We then intersect with the language of minimal representatives for the elements of $G$ to obtain a language of minimal representatives for the elements of $H$, which grows rationally.

The following two Lemmas are elementary and are stated without proof.
\begin{lemma}\label{lem:vasubgroup}
	Any subgroup of a virtually abelian group is virtually abelian.
\end{lemma}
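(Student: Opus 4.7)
The plan is to exhibit a finite-index abelian subgroup of $H$ by intersecting with the abelian normal subgroup of $G$. Concretely, since $G$ is virtually abelian, the paper has already fixed a finite-index normal subgroup $\Z^n \lhd G$ of index $d$. I will show that $H \cap \Z^n$ is the required finite-index abelian subgroup of $H$.

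The first step is immediate: $H \cap \Z^n$ is a subgroup of $\Z^n$, and is therefore abelian. The main (and only substantive) step is to bound its index in $H$. For this, the standard argument is to check that the assignment
\begin{equation*}
H / (H \cap \Z^n) \longrightarrow G / \Z^n, \qquad h(H \cap \Z^n) \longmapsto h\Z^n,
\end{equation*}
is a well-defined injection of left-coset sets: if $h_1, h_2 \in H$ satisfy $h_1 \Z^n = h_2 \Z^n$, then $h_2^{-1} h_1 \in \Z^n$, and since $h_2^{-1} h_1 \in H$ as well, it lies in $H \cap \Z^n$, so $h_1(H \cap \Z^n) = h_2(H \cap \Z^n)$. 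Consequently $[H : H \cap \Z^n] \leq [G : \Z^n] = d < \infty$, and $H$ is virtually abelian.

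There is no genuine obstacle here: the statement is a classical fact about subgroups and finite-index (normal) subgroups, which is presumably why the authors declare it elementary and omit the proof. The only small point worth flagging is that normality of $\Z^n$ is not actually needed for the index bound — the injection of coset spaces above works for any subgroup of finite index — although normality is of course available for free in the present setting.
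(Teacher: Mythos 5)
Your proof is correct and is the standard argument; the paper explicitly declines to prove this lemma, stating only that it is "elementary and stated without proof," so there is no paper proof to compare against. Your observation that normality of $\Z^n$ is not needed for the index bound is also accurate.
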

In the next Lemma, we establish criteria for an element of a virtually abelian group to lie in a chosen subgroup.
\begin{lemma}\label{lem:subgroupcriteria}
	Let $G$ be a finitely generated virtually abelian group, with normal subgroup $\Z^n$ of finite index $d$ as usual. Let $H<G$, and choose a set $\{h_1,\ldots,h_c\}\in H$ of coset representatives for $H\cap\Z^n\backslash H$. Then this set can be extended to a set $\{h_1,\ldots,h_c,\ldots,h_d\}$ of coset representatives for $\Z^n\backslash G$, and an element $xh_i\in G$, with $x\in\Z^n$, is in $H$ if and only if 
	\begin{enumerate}
	\item $1\leq i\leq c$, and\label{subgroupcrit1}
	\item $x\in H\cap\Z^n$.\label{subgroupcrit2}
	\end{enumerate}
\end{lemma}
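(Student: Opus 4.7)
The plan is to prove this as a direct exercise in right coset arithmetic. The only content beyond definitions is noting that the $h_i$'s, being chosen from distinct $(H\cap\Z^n)$-cosets of $H$, must lie in distinct $\Z^n$-cosets of $G$, so they can legitimately be extended.

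First I would check that $c \leq d$ and that the extension is valid. Since $\Z^n\lhd G$, $H\cap\Z^n\lhd H$, and the inclusion $H\hookrightarrow G$ induces an injection $H/(H\cap\Z^n)\hookrightarrow G/\Z^n$, so $c\leq d$. To see that the $h_i$ lie in distinct right $\Z^n$-cosets of $G$, suppose $\Z^n h_i=\Z^n h_j$. Then $h_jh_i^{-1}\in\Z^n$, but $h_j h_i^{-1}\in H$ as well, so $h_jh_i^{-1}\in H\cap\Z^n$, i.e. $(H\cap\Z^n)h_i=(H\cap\Z^n)h_j$, forcing $i=j$. Hence $\{h_1,\ldots,h_c\}$ can be completed to a transversal $\{h_1,\ldots,h_d\}$ for $\Z^n\backslash G$.

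Next I would prove the biconditional. For the reverse direction, if $1\leq i\leq c$ and $x\in H\cap\Z^n$, then both $x$ and $h_i$ lie in $H$, so $xh_i\in H$. For the forward direction, suppose $xh_i\in H$ with $x\in\Z^n$. Decomposing $H=\bigsqcup_{j=1}^c(H\cap\Z^n)h_j$ and multiplying on the left by $\Z^n$ gives
\begin{equation*}
\Z^n H=\bigsqcup_{j=1}^c\Z^n h_j,
\end{equation*}
the union being disjoint by the previous paragraph. Since $xh_i\in H\subseteq\Z^n H$ and simultaneously $xh_i\in\Z^n h_i$, the coset $\Z^n h_i$ must coincide with some $\Z^n h_j$ for $j\leq c$, forcing $i=j\leq c$, which is \eqref{subgroupcrit1}. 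Then $h_i\in H$ gives $x=(xh_i)h_i^{-1}\in H$, and $x\in\Z^n$ by hypothesis, so $x\in H\cap\Z^n$, which is \eqref{subgroupcrit2}.

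There is no real obstacle here; the only subtlety is the initial observation that the $H$-transversal $\{h_1,\ldots,h_c\}$ automatically consists of representatives of distinct $\Z^n$-cosets of $G$, which is why the extension step makes sense and why membership in $H$ can be detected purely from the coset index $i$ together with the integer part $x$.
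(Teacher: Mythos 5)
Your argument is correct and complete; the paper itself states this lemma without proof (calling it ``elementary''), so there is no author proof to compare against. You supply exactly the expected argument: the second-isomorphism-theorem injection $H/(H\cap\Z^n)\hookrightarrow G/\Z^n$ gives $c\leq d$ and shows the $h_i$ lie in distinct $\Z^n$-cosets, the reverse implication is closure of $H$ under products, and the forward implication follows from the disjoint decomposition $\Z^n H=\bigsqcup_{j=1}^c\Z^n h_j$ together with the fact that distinct $\Z^n$-cosets are disjoint. Nothing is missing.
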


\begin{theorem}\label{thm:relative}
	Let $G$ be a finitely generated virtually abelian group and let $H$ be any subgroup of $G$. Then $H$ has rational weighted growth series relative to any choice of generators of $G$ (with any weight function).
\end{theorem}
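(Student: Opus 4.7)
The plan is to isolate, inside Benson's language of unique minimal-weight representatives for $G$, exactly those representatives whose image lies in $H$, and to show that the resulting sublanguage is again captured by polyhedral sets. First I would pass to the enlarged generating set $\wt{S}$ of \eqref{eq:Stilde}, which leaves the weighted growth function unchanged and reduces me to the finite pattern set $P$ of Definition \ref{def:finitepatterns}. Using Lemma \ref{lem:subgroupcriteria}, I choose the transversal $T$ for $\Z^n\backslash G$ so that its first $c$ elements $h_1,\ldots,h_c$ form a transversal for $(H\cap\Z^n)\backslash H$.

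Next, for each $\pi\in P$ I describe $V^\pi:=\{w\in W^\pi\mid\overline{w}\in H\}$ and show that $\overrightarrow{V^\pi}$ is a polyhedral subset of $\N^{m(\pi)}$. By \eqref{eq:wbar}, $\overline{w}\in H$ forces $t_\pi\in\{h_1,\ldots,h_c\}$, a condition depending only on $\pi$; if $\pi$ fails it I set $V^\pi=\emptyset$. For the surviving patterns, Lemma \ref{lem:subgroupcriteria} says that $\overline{w}\in H$ iff $(A_1^\pi\cdot\w+B_1^\pi,\ldots,A_n^\pi\cdot\w+B_n^\pi)^T\in H\cap\Z^n$. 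The key observation is that $H\cap\Z^n$ is itself a polyhedral subset of $\Z^n$: it is the kernel of the quotient homomorphism $\Z^n\to\Z^n/(H\cap\Z^n)$, so by the structure theorem for finitely generated abelian groups, membership in it is cut out by finitely many integral linear equations (for the free part) together with finitely many congruences (for the torsion part), each of which is an elementary set. Since $\w\mapsto(A_1^\pi\cdot\w+B_1^\pi,\ldots,A_n^\pi\cdot\w+B_n^\pi)^T$ is an integral affine map $\Z^{m(\pi)}\to\Z^n$, Proposition \ref{prop:polyaffine} shows that $\overrightarrow{V^\pi}$ is polyhedral, and intersecting with $\N^{m(\pi)}$ preserves this by Proposition \ref{prop:polyclosed}.

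Finally I intersect with Benson's language. By Theorem \ref{thm:Upi}, for each $\pi\in P$ there is $U^\pi\subset\wt{S}^*$ with $\overrightarrow{U^\pi}$ polyhedral such that $\bigcup_{\pi\in P}U^\pi$ is a language of unique minimal-weight representatives for $G$. Then $\overrightarrow{U^\pi\cap V^\pi}=\overrightarrow{U^\pi}\cap\overrightarrow{V^\pi}$ is polyhedral by Proposition \ref{prop:polyclosed}, and $\bigcup_{\pi\in P}(U^\pi\cap V^\pi)$ is a language of unique minimal-weight representatives for $H$ as a subset of $G$. Since on each $U^\pi\cap V^\pi$ the weight contribution from the $y$-generators is constant while the $x$-contributions are read off as linear functionals of the coordinates, Proposition \ref{prop:polyhedralrationalgrowth} gives that each $U^\pi\cap V^\pi$ has rational weighted growth series; summing over the finite set $P$ yields rationality of the relative growth series with respect to $\wt{S}$, and hence with respect to $S$.

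The only genuinely technical step is identifying $H\cap\Z^n$ as polyhedral and pushing this property through the pattern-dependent affine substitution $\w\mapsto A^\pi\w+B^\pi$; but both parts are handled cleanly by the structure theorem and Proposition \ref{prop:polyaffine}, so I do not foresee a serious obstacle.
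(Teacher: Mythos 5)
Your proof is correct and follows the same overall strategy as the paper: pass to $\wt{S}$ and the finite pattern set $P$, use Lemma \ref{lem:subgroupcriteria} to reduce membership in $H$ to a coset condition on the pattern plus membership of an affine image of $\w$ in $H\cap\Z^n$, show this set of $\w$ is polyhedral, and intersect with Benson's $U^\pi$. The one place you genuinely diverge is in establishing polyhedrality of the condition $A^\pi\w+B^\pi\in H\cap\Z^n$: the paper fixes a basis $\{\bs{b}_1,\ldots,\bs{b}_f\}$ of $H\cap\Z^n$, introduces $f$ auxiliary integer variables $a_1,\ldots,a_f$, writes the membership as a system of equalities $C_i^\pi\cdot\v=-B_i^\pi$ in $\Z^{m+f}$, and then projects, invoking Proposition \ref{prop:polyaffine}; you instead observe directly that $H\cap\Z^n$ is itself a polyhedral subset of $\Z^n$ (via Smith normal form this is cut out by equalities and congruences in a unimodularly transformed coordinate system, all elementary sets) and then take the preimage under the integral affine map $\w\mapsto A^\pi\w+B^\pi$, again via Proposition \ref{prop:polyaffine}. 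Both routes are valid and invoke the same proposition, just in opposite directions (image-of-projection vs.\ preimage); your version is marginally cleaner in that it avoids the auxiliary variables and isolates the reusable fact that any subgroup of $\Z^n$ is a polyhedral set, whereas the paper's variable-elimination pattern is what it reuses later in Sections 4 and 5. Minor nit: your parenthetical justification ("kernel of the quotient homomorphism\ldots") should really appeal to Smith normal form or an adapted basis to produce the explicit linear functionals; the structure theorem as usually stated gives the abstract isomorphism type of the quotient but not immediately the cutting-out conditions.
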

\begin{proof}
	The strategy of the proof is to use the criteria given in Lemma \ref{lem:subgroupcriteria} to detect those elements of $G$ which are contained in $H$, pattern by pattern, and show that they form polyhedral sets. We can then use the sets $U^\pi$ (from Theorem \ref{thm:Upi}) to find minimal representatives for the elements of $H$.
	
	Fix a generating set $S$ for $G$, and a weight function $\omega$. We consider the expanded generating set $\wt{S}$ of Section \ref{sec:finitepatterns}, and the corresponding finite set of patterns $P$ as in Definition \ref{def:finitepatterns}. Since words with the same pattern represent elements of the same $\Z^n$-coset (see Remark \ref{rem:patternsandcosets}), only words whose pattern represents an element of $\bigsqcup_{i=1}^c\Z^nh_i$ can possibly be in $H$. Call the set of such patterns $P_H$.
	
	Fix one of these patterns, $\pi\in P_H$, and the resulting vectors $A^\pi_i$ and integers $B^\pi_i$ as in section \ref{sec:prelim}. Consider a word $w\in W^\pi$. By design, we have an $h_i$ with $1\leq i\leq c$ so that $\overline{w}\in\Z^n h_i$, so criterion \eqref{subgroupcrit1} is satisfied. Now by criterion \eqref{subgroupcrit2}, $w$ represents an element of $H$ if and only if $\overline{w}\in (H\cap\Z^n)h_i\subset\Z^nh_i$. Since \[\overline{w}=\left\lbrace\begin{pmatrix}A_1^\pi\cdot\w \\ A_2^\pi\cdot\w \\ \vdots \\ A_n^\pi\cdot\w\end{pmatrix}+\begin{pmatrix}B_1^\pi \\ B_2^\pi \\ \vdots \\ B_n^\pi\end{pmatrix}\right\rbrace h_i,\] $\overline{w}\in H$ if and only if
	\begin{equation}\label{dotprod3}
	\begin{pmatrix}A_1^\pi\cdot\w \\ A_2^\pi\cdot\w \\ \vdots \\ A_n^\pi\cdot\w\end{pmatrix}+\begin{pmatrix}B_1^\pi \\ B_2^\pi \\ \vdots \\ B_n^\pi\end{pmatrix}\in H\cap\Z^n.
	\end{equation}
	
	Now $H\cap\Z^n$ is a (free abelian) subgroup of $\Z^n$. Suppose it has dimension $f$, and choose a basis $\{\bs{b}_1,\ldots,\bs{b}_f\}\subset\Z^n$. Then $w$ satisfies \eqref{dotprod3} if and only if there exist integers $a_1,\ldots,a_f$ so that \[\begin{pmatrix}A_1^\pi\cdot\w \\ A_2^\pi\cdot\w \\ \vdots \\ A_n^\pi\cdot\w\end{pmatrix}+\begin{pmatrix}B_1^\pi \\ B_2^\pi \\ \vdots \\ B_n^\pi\end{pmatrix}=a_1\bs{b}_1+\cdots+a_f\bs{b}_f.\] In other words, 
	\begin{equation}\label{Hcriterion}
	A_i^\pi\cdot\w+B_i^\pi=e_i\cdot(a_1\bs{b}_1+\cdots+a_f\bs{b}_f)=a_1(e_i\cdot\bs{b}_1)+\cdots+a_f(e_1\cdot\bs{b}_f)
	\end{equation}
	for each $1\leq i\leq n$. 
	
	We will now express the set of all vectors satisfying \eqref{Hcriterion} as a polyhedral set. For each $1\leq i\leq n$, define the $(m+f)$-dimensional vector 
	\begin{equation*}
	C_i^\pi=\begin{pmatrix} \vrule \\ A_i^\pi \\ \vrule \\ -e_i\cdot \bs{b}_1 \\ \vdots \\ -e_i\cdot \bs{b}_f \end{pmatrix}
	\begin{tabular}{l}
	$\left.\lefteqn{\phantom{\begin{pmatrix} \vrule \\ A_i^\pi \\ \vrule \end{pmatrix}}}\right\}$ $m$ rows \\
	$\left.\lefteqn{\phantom{\begin{pmatrix} -e_i\cdot \bs{b}_1 \\ \vdots \\ -e_i\cdot \bs{b}_f \end{pmatrix}}}\right\}$ $f$ rows
	\end{tabular}
	\end{equation*}
	 where the first $m$ entries are the entries of the vector $A_i^\pi$, and the last $f$ entries are $-1$ times the $i$th component of the basis vectors. Now a vector $\w\in\Z^m$ satisfies \eqref{Hcriterion} for some $i$ precisely when there is a vector $\v\in\Z^{m+f}$, with entries $v_1,v_2,\ldots,v_{m+f}$, such that $\w=(v_1,\ldots,v_m)^T$ and $A_i^\pi\cdot\w + B_i^\pi=e_i\cdot(v_{m+1}\bs{b}_1+\cdots+v_{m+f}\bs{b}_f)$. We rewrite this last equation as \[A_i^\pi\cdot\begin{pmatrix} v_1 \\ v_2 \\ \vdots \\ v_m \end{pmatrix}-\begin{pmatrix} e_i\cdot\bs{b}_1 \\ e_i\cdot\bs{b}_2 \\ \vdots \\ e_i\cdot\bs{b}_f \end{pmatrix}\cdot \begin{pmatrix} v_{m+1} \\ v_{m+2} \\ \vdots \\ v_{m+f} \end{pmatrix} = -B_i^\pi\] i.e. \[C_i^\pi\cdot\v=-B_i^\pi.\]
	
	Let $p_m\colon\Z^{m+f}\rightarrow\Z^m$ denote projection onto the first $m$ coordinates. We can then express the set of all $\w$ that satisfy \eqref{dotprod3} as the following \[X^\pi= p_m\left(\bigcap_{i=1}^n\left\lbrace\v\in\Z^{m+f}\,\middle\vert\, C_i^\pi\cdot\v=-B_i^\pi\right\rbrace\right)\cap\N^m.\] Note that $\{\v\in\Z^{m+f}\mid C_i^\pi\cdot\v=-B_i^\pi\}$ is an elementary set for each $i$, and therefore the intersection is a (basic) polyhedral set. Therefore $X^\pi$ is also a polyhedral set. It corresponds to all words in $W^\pi$ which represent elements of the subgroup $H$. We wish to find unique minimal-weight representatives for these elements.
	
	Recall that the set $U^\pi\subset W^\pi$ is a set of minimal-weight representatives for the elements of $\overline{W^\pi}$. The intersection $\overrightarrow{U^\pi}\cap X^\pi$ consists of a unique, minimal-weight vector representing each element of $H$ that lies in $\overline{W^\pi}$ (and is not represented by a word with another pattern, of smaller weight). Since this intersection is polyhedral, it grows rationally with respect to the generators of $G$. So we have finitely many rationally growing sets whose (disjoint) union corresponds to a language of unique, minimal-weight representatives for $H$. The relative growth series of $H$ is the sum of the growth series of these individual sets, so is itself rational. Note that we have not had to consider overlaps as the sets $U^\pi$ are already non-overlapping.
\end{proof}

\begin{corollary}\label{subgroupunions}
	Finite unions of subgroups of virtually abelian groups have rational relative growth.
\end{corollary}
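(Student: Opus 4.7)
The plan is to reduce the case of a finite union of subgroups to the case of a single subgroup (already handled by Theorem~\ref{thm:relative}) via inclusion–exclusion. Let $H_1,\ldots,H_k$ be subgroups of a virtually abelian group $G$, fix a finite weighted generating set $(S,\omega)$, and set $H=\bigcup_{i=1}^k H_i$. For every nonempty $I\subseteq\{1,\ldots,k\}$, the intersection $H_I:=\bigcap_{i\in I}H_i$ is again a subgroup of $G$, since subgroups are closed under intersection.

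The key point is that the relative weighted counting functions are linked by the standard inclusion–exclusion identity
\begin{equation*}
\sigma^\omega_{H\leq G,S}(n)=\sum_{\emptyset\neq I\subseteq\{1,\ldots,k\}}(-1)^{|I|+1}\,\sigma^\omega_{H_I\leq G,S}(n),
\end{equation*}
valid because for any fixed element $g\in H$ the indicator identity $\mathbf{1}_{H}(g)=\sum_{\emptyset\neq I}(-1)^{|I|+1}\prod_{i\in I}\mathbf{1}_{H_i}(g)$ holds, and $\prod_{i\in I}\mathbf{1}_{H_i}=\mathbf{1}_{H_I}$. Multiplying by $z^n$ and summing over $n$ yields the corresponding identity at the level of formal power series.

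By Theorem~\ref{thm:relative}, each $\mathcal{S}^\omega_{H_I\leq G,S}(z)$ is a rational function of $z$. Since the class of rational functions over $\Z$ is closed under finite $\Z$-linear combinations, the alternating sum on the right-hand side is rational, and hence so is $\mathcal{S}^\omega_{H\leq G,S}(z)$. There is essentially no obstacle here: the only thing to verify is that finite intersections of subgroups remain subgroups (so that Theorem~\ref{thm:relative} applies to each term), after which the result is a formal consequence of inclusion–exclusion and closure of rational functions under finite sums.
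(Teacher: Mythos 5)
Your proof is correct. It takes a slightly different route from the paper: you apply the inclusion--exclusion formula directly to the counting functions, observing that each $H_I=\bigcap_{i\in I}H_i$ is a subgroup, invoking Theorem~\ref{thm:relative} for each $\mathcal{S}^\omega_{H_I\leq G,S}$, and then using closure of rational functions under finite $\Z$-linear combinations. The paper instead argues by induction on $k$, decomposing $H_1\cup\cdots\cup H_{k+1}$ as a disjoint union of three pieces ($J$, $(\bigcup_{i\leq k}H_i)\setminus J$, and $H_{k+1}\setminus J$, where $J=\bigcup_{i\leq k}(H_i\cap H_{k+1})$) and applying the inductive hypothesis to $J$, which is again a union of $k$ subgroups. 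The underlying facts used are identical (subgroups closed under finite intersection, Theorem~\ref{thm:relative}, rational functions closed under sums/differences); the difference is purely one of bookkeeping. Your version is arguably more transparent, since it skips the induction and makes the inclusion--exclusion structure explicit in a single formula, whereas the paper's inductive phrasing hides the same alternating sum in the recursion.
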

\begin{proof}
	 First note that the growth series of a finite \emph{disjoint} union of subsets of $G$ is simply the sum of their individual growth series, and for subsets $A\subset B\subseteq G$, the growth series of $B\setminus A$ is the difference of their individual growth series. We will induct on the number of subgroups in the finite union.
	
	For subgroups $H_1$ and $H_2$, let $I=H_1\cap H_2$. Then we can express their union as a disjoint union of three subsets, \[H_1\cup H_2 = I\cup (H_1\setminus I)\cup (H_2\setminus I).\] Since $H_1$, $H_2$, and $I$ are all subgroups, they have rational relative growth, so each term in the above expression does also, and so $H_1\cup H_2$ has rational growth series.
	
	Now assume the union of $k$ subgroups has rational growth. Consider a union of $k+1$ subgroups: \[\bigcup_{i=1}^{k+1}H_i=\bigcup_{i=1}^k H_i\cup H_{k+1}.\]
	Let $J=\left(\bigcup_{i=1}^k H_i\right) \cap H_{k+1}$. Then we have a disjoint union \[\bigcup_{i=1}^{k+1}H_i=J\cup\left(\bigcup_{i=1}^kH_i\right)\setminus J\cup\left(H_{k+1}\setminus J\right)\] and so if $J$ has rational growth then $\bigcup_{i=1}^{k+1}H_i$ has rational growth. But we can write $J$ as the union of $k$ subgroups: \[J=\bigcup_{i=1}^k(H_i\cap H_{k+1})\] so it has rational growth by the inductive hypothesis.
\end{proof}


\section{Coset Growth series}\label{sec:coset}
In this section we demonstrate the following:

\begin{theorem}\label{thm:cosetfull}
	Let $G$ be a virtually abelian group with any choice of finite, weighted generating set $S$, and $H$ any subgroup of $G$. Then the set of right cosets $H\backslash G$ has rational weighted growth series with respect to $S$.
\end{theorem}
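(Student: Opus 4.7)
My plan is to follow the strategy of Theorem \ref{thm:relative}: for each pattern $\pi \in P$, exhibit a polyhedral set $\overrightarrow{L^\pi} \subset \N^{m(\pi)}$ so that the disjoint union $\bigsqcup_{\pi \in P} L^\pi$ is a language of unique minimum-weight word representatives---one per right $H$-coset of $G$. Rational weighted growth of each $L^\pi$ then follows from Proposition \ref{prop:polyhedralrationalgrowth}, and the coset growth series is the resulting finite sum.

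First, I will encode the ``same right $H$-coset'' relation polyhedrally. For each pair of patterns $(\pi, \mu) \in P \times P$, I rewrite $\overline{v}\overline{w}^{-1}$ in standard form by moving $y$-generators and coset representatives to the right using the matrices $\Gamma_k$ and $\Delta_t$. The result has the form $\overline{v}\overline{w}^{-1} = \vec{x}_{\pi,\mu}(\vec{v}, \vec{w}) \cdot s_{\pi,\mu}$, where $s_{\pi,\mu} \in T$ depends only on the patterns and $\vec{x}_{\pi,\mu}(\vec{v}, \vec{w}) \in \Z^n$ is affine-linear in $(\vec{v}, \vec{w})$. Lemma \ref{lem:subgroupcriteria} then reduces $\overline{v}\overline{w}^{-1} \in H$ to the pattern-only condition that $s_{\pi,\mu}$ is one of the designated $H$-coset representatives, together with $\vec{x}_{\pi,\mu}(\vec{v}, \vec{w}) \in H \cap \Z^n$; the latter, as in the proof of Theorem \ref{thm:relative}, is a polyhedral condition on $(\vec{v}, \vec{w}) \in \N^{m(\pi)+m(\mu)}$.

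Next, I identify minimum-weight-in-coset representatives. Starting from Benson's language $U^\pi \subseteq W^\pi$ (Theorem \ref{thm:Upi}), set
\[M^\pi := \{v \in U^\pi \mid \omega(v) \leq \omega(w) \text{ for every } \mu \in P \text{ and every } w \in U^\mu \text{ with } \overline{v}\overline{w}^{-1} \in H\}.\]
Proposition \ref{elementary} ensures the minimum weight of any coset is realised by some $w \in U^\mu$ with $\mu \in P$, so restricting the quantifier range to $P$ loses nothing. For each $\mu$, the set of ``bad'' $v \in W^\pi$ (those admitting a strictly lighter same-coset $w \in U^\mu$) is a projection to $\N^{m(\pi)}$ of the intersection of the Step~1 polyhedral set with $\overrightarrow{U^\mu}$ and with the elementary inequality $A_{n+1}^\mu \cdot \vec{w} + B_{n+1}^\mu < A_{n+1}^\pi \cdot \vec{v} + B_{n+1}^\pi$. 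By Propositions \ref{prop:polyaffine} and \ref{prop:polyclosed}, $\overrightarrow{M^\pi}$ is therefore polyhedral.

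Finally, I tie-break for uniqueness, since a coset may have several minimum-weight elements. Fix any total order $\prec$ on the finite set $P$, take lexicographic order on each $\N^{m(\pi)}$, and extend to the disjoint union by ordering by pattern first. Define
\[L^\pi := \{v \in M^\pi \mid \text{no } (\mu, \vec{w}) \prec (\pi, \vec{v}) \text{ has } w \in M^\mu \text{ and } \overline{v}\overline{w}^{-1} \in H\}.\]
Since elements of $\bigsqcup_\mu M^\mu$ in the same coset automatically share weight, this condition picks exactly the lex-smallest word for each coset. Lexicographic order on $\N^m$ is a finite union of basic polyhedral conditions (one for each possible first-differing coordinate), so the dominated subset of $M^\pi$ is a projection of a polyhedral set, hence polyhedral, making $\overrightarrow{L^\pi}$ polyhedral by complement. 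The main obstacle is precisely this tie-breaking step: Steps~1 and~2 are natural adaptations of the relative-growth machinery, but ensuring uniqueness per coset while staying in the polyhedral category requires a compatible tie-breaking rule, and tracking polyhedral closure through the compound projections, intersections, and complements is the most delicate part.
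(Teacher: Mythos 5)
Your proof is correct, but it takes a genuinely different and shorter route than the paper. The paper proceeds in two stages: first Theorem~\ref{thm:cosetpart} handles a subgroup $F\leq\Z^n$ by building a bespoke partial order $\leq_\pi$ on all of $W^\pi$ (not just $U^\pi$), invoking Dickson's Lemma to show the set $V_F^\pi$ of $\leq_\pi$-minimal words is polyhedral, and then removing cross-pattern overlaps; second, for general $H$, it decomposes each $H$-coset into $c$ cosets of $H\cap\Z^n$, forms $c$-tuples of candidate representatives via the $N$-fold pattern machinery of Section~\ref{sec:N-foldpatterns}, and appeals to Lemma~\ref{lem:minimaltuplerep}. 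You bypass both stages: you encode ``$\overline{v}\,\overline{w}^{-1}\in H$'' directly as a polyhedral relation between $\overrightarrow{U^\pi}$ and $\overrightarrow{U^\mu}$ (using $t_\pi t_\mu^{-1}=z\,s_{\pi,\mu}$ and a conjugation matrix $\Delta_{s_{\pi,\mu}}$ to put $\overline{v}\,\overline{w}^{-1}$ in standard form), then extract $M^\pi$ and $L^\pi$ entirely by Boolean operations and projections. The observation that makes this work is that Benson's $U^\pi$ already contains, for every coset, an element realising the coset's minimal weight (each group element is realised at its own minimal weight in $\bigsqcup_\mu U^\mu$), so there is never a need to reconstruct minima from the full $W^\pi$; that is exactly where Dickson's Lemma entered the paper's argument, and your approach does without it. Two points deserve a sentence of care in a writeup: (i) the transversal $T$ must be chosen to extend a transversal of $(H\cap\Z^n)\backslash H$, as in Lemma~\ref{lem:subgroupcriteria}, so that ``$s_{\pi,\mu}\in\{h_1,\dots,h_c\}$'' really is a pattern-only dichotomy; and (ii) for uniqueness you should note that the set of $M^\mu$-representatives of a fixed coset is finite (all share the coset's minimal weight, and only finitely many group elements have a given weight), so the lex-least one exists without invoking well-ordering of $\N^m$ --- though that also holds.
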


This is proved in two parts. Firstly, we generalise the main result of \cite{Benson} to show that if a subgroup $F$ is contained in a finite index abelian subgroup of $G$ then there exists a language of minimal representatives for the cosets of $F$ in $G$ that grows rationally.

For a general subgroup $H$, its intersection with the finite index abelian subgroup of $G$ is abelian, and of finite index in $H$. We then consider the language of minimal representatives for the cosets of this intersection subgroup in $G$, and from these we choose a language of representatives for the cosets of $H$. We show that this language can be chosen so that it grows rationally.

We prove the following special case of Theorem \ref{thm:cosetfull}.
\begin{theorem}\label{thm:cosetpart}
	Let $G$ be a virtually abelian group, with any choice of finite weighted generating set $S$, and with finite index normal subgroup $\Z^n$. If $F$ is a subgroup of $G$ contained in $\Z^n$, then the weighted growth of $F\backslash G$ with respect to $S$ is rational.
\end{theorem}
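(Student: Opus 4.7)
The approach mirrors Benson's argument (and the proof of Theorem \ref{thm:relative}) but adds a disambiguation step: after identifying when two minimum-weight words represent elements of the same $F$-coset, we must extract exactly one minimum-weight representative per coset. Two useful features of the assumption $F \leq \Z^n$ are that each $F$-coset lies inside a unique $\Z^n$-coset (so only patterns $\pi, \mu \in P$ with $t_\pi = t_\mu$ can contribute words representing the same coset), and that the condition ``$\overline{w'} \in F\overline{w}$'' is polyhedral, via essentially the auxiliary-variable construction in the proof of Theorem \ref{thm:relative}.

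First I would pass to $\wt{S}$ and the finite pattern set $P$, and invoke Theorem \ref{thm:Upi} to obtain the polyhedral sets $\overrightarrow{U^\pi} \subseteq \N^{m(\pi)}$ of unique minimum-weight representatives for the elements of $G$. Fix a $\Z$-basis $\bs{b}_1, \ldots, \bs{b}_f$ of the sublattice $F$. For each pair $(\pi, \mu)\in P\times P$ with $t_\pi = t_\mu$, equation \eqref{eq:wbar} shows that $\overline{w'} \in F\overline{w}$ if and only if there exist integers $a_1, \ldots, a_f$ with
\[
A_i^\pi \cdot \w + B_i^\pi - A_i^\mu \cdot \vec{w'} - B_i^\mu \;=\; \sum_{j=1}^f a_j (\bs{b}_j)_i \quad \text{for each } 1 \leq i \leq n.
\]
This describes a basic polyhedral set in $\Z^{m(\pi) + m(\mu) + f}$; projecting off the $a$-variables (Proposition \ref{prop:polyaffine}) and intersecting with $\overrightarrow{U^\pi} \times \overrightarrow{U^\mu}$ yields a polyhedral $\overrightarrow{V^{\pi, \mu}} \subseteq \N^{m(\pi)+m(\mu)}$ encoding ``same $F$-coset''.

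Next, I would extract from $U^\pi$ the words $w$ whose image $\overline{w}$ is minimum-weight within its $F$-coset. By \eqref{eq:weight} the weight is a linear functional of $\w$, so the set of pairs $(\w, \vec{w'}) \in \overrightarrow{V^{\pi, \mu}}$ with $\omega(w') < \omega(w)$ is polyhedral. Removing the union (over all admissible $\mu$) of its projections to the first factor from $\overrightarrow{U^\pi}$ gives a polyhedral set $\overrightarrow{M^\pi}$ of min-weight-in-its-$F$-coset vectors, using Proposition \ref{prop:polyclosed}.

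The main technical obstacle is the disambiguation: $\bigsqcup_\pi M^\pi$ may contain several min-weight representatives of the same $F$-coset, both within a fixed $U^\pi$ (when a coset has several co-minimizers with the same pattern) and across different patterns. To resolve this I would fix a total order on $P$ and use lexicographic order on $\N^{m(\pi)}$, which is polyhedral as a finite disjunction of basic polyhedral conditions. Declare $w \in M^\pi$ \emph{chosen} when (i) no $\mu < \pi$ in $P$ and $w' \in M^\mu$ satisfy $(\w, \vec{w'}) \in V^{\pi, \mu}$, and (ii) $\w$ is lexicographically minimal among $\{\vec{w'} \in \overrightarrow{M^\pi} : (\w, \vec{w'}) \in V^{\pi, \pi}\}$. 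Both conditions cut out polyhedral subsets $\overrightarrow{R^\pi} \subseteq \overrightarrow{M^\pi}$, and $\bigsqcup_\pi R^\pi$ is a language of unique minimum-weight representatives for the $F$-cosets of $G$. Proposition \ref{prop:polyhedralrationalgrowth} then gives rationality of the weighted growth series of each $R^\pi$, and summing over $\pi \in P$ yields rationality of the coset growth series.
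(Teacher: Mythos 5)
Your proof is correct, and it takes a genuinely different route from the paper's. The paper does not reuse Theorem \ref{thm:Upi}: instead it works directly in $W^\pi$, introduces a bespoke partial order $\leq_\pi$ (built from the vectors $A_1^\pi,\ldots,A_n^\pi$, the weight functional $A_{n+1}^\pi$, and an extra spanning set of standard basis vectors) that is simultaneously compatible with weight and with the $F$-coset decomposition, proves this restricts to a well-order on each coset, and then invokes Dickson's Lemma (Lemma \ref{lem:Dickson}) to show that the set $V_F^\pi$ of $\leq_\pi$-minimal words is the complement of a finite union of translated orthants, hence polyhedral; cross-pattern overlaps are then removed via the sets $R^{\pi,\mu}$ and $R_*^{\pi,\mu}$. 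Your argument instead leverages the already-established polyhedral languages $U^\pi$ of minimum-weight element representatives, filters to the min-weight-in-coset sublanguages $M^\pi$ by a linear weight comparison, and disambiguates co-minimizers using lexicographic order on $\N^{m(\pi)}$ together with a total order on $P$. The key observation that makes your shortcut work is that ``not lex-minimal in its fiber'' is the projection of a polyhedral relation (a finite disjunction of equalities-then-one-strict-inequality conditions on the pair space), so no Dickson-type finiteness argument is needed. In exchange, the paper's $\leq_\pi$ produces a unique per-pattern minimizer in one stroke, whereas you need the extra lex-tie-break; both settle the same overlap issues. One small thing worth stating explicitly: since $F\subseteq\Z^n$, two words representing the same $F$-coset must have patterns $\pi,\mu$ with $t_\pi=t_\mu$, which is what makes the restriction to ``admissible'' pairs complete; and if $w\in M^\pi$ and $w'\in M^\mu$ represent the same coset then $\omega(w)=\omega(w')$ automatically, so your condition (i) does not need a separate weight clause.
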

Theorem 1.2 of \cite{Benson} is the case where $F$ is the trivial group. The proof given here follows a similar structure to that in \cite{Benson}. For each pattern $\pi\in P$, we establish an ordering on the words of $W^\pi$ which respects their weight, and use this to find a polyhedral set of minimal representatives for the cosets that intersect $\overline{W^\pi}$. We then show that the overlaps between these sets are also polyhedral, so can be removed while preserving rationality. Note that if $F$ has finite index in $G$, $F\backslash G$ is finite and so the growth series is a polynomial, and so trivially rational. From now on we assume that $F$ has infinite index in $G$, and hence $[\Z^n\colon F]$ is also infinite.

 We first establish a criterion for when two words represent elements of the same $F$-coset. We will again use the vectors $A_1^\pi,\ldots,A_n^\pi$ (of dimension $m(\pi)$), $A_1^\mu,\ldots,A_n^\mu$ (of dimension $m(\mu)$) and integers $B_1^\pi,\ldots,B_n^\pi$, and $B_1^\mu\ldots,B_n^\mu$ defined in section \ref{sec:prelim}.

\begin{proposition}\label{prop:cosetcriterion}
Let $v\in W^\pi$, $w\in W^\mu$, for some patterns $\pi$,$\mu$. Let $F<\Z^n$ be of rank $f\leq n$, with basis $\{\bs{b}_1,\bs{b}_2,\ldots,\bs{b}_f\}\subset\Z^n$. Then $\overline{v}$ and $\overline{w}$ are in the same coset of $F$ in $G$ if and only if
\begin{enumerate}
	\item $\overline{\pi}$ and $\overline{\mu}$ are in the same coset of $\Z^n$ in $G$, and\label{item:cosetcriterion1}
	\item there exist integers $a_1,\ldots,a_f$ such that \[A_i^\pi\cdot\v+B_i^\pi-\left(A_i^\mu\cdot\w+B_i^\mu\right)=e_i\cdot\left(\sum_{j=1}^fa_j\bs{b}_j\right)\] for each $1\leq i\leq n$ (where $e_i$ as usual denotes the $i$th standard basis vector in $\Z^n$).\label{item:cosetcriterion2}
\end{enumerate}
\end{proposition}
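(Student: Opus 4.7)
The plan is to apply equation~\eqref{eq:wbar} to both $\overline{v}$ and $\overline{w}$ and compute directly when their quotient lies in $F$. Writing
\[
\overline{v} = X_\pi\, t_\pi, \qquad \overline{w} = X_\mu\, t_\mu,
\]
where $X_\pi := (A_1^\pi\cdot\v + B_1^\pi,\ldots,A_n^\pi\cdot\v + B_n^\pi)^T \in \Z^n$ and $X_\mu$ is defined analogously, the assertion that $\overline{v}$ and $\overline{w}$ represent the same right $F$-coset is the condition
\[
\overline{v}\,\overline{w}^{-1} = X_\pi\, t_\pi t_\mu^{-1}\, X_\mu^{-1} \in F.
\]

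First I would extract the structural condition. Since $F \subseteq \Z^n$, membership in $F$ forces $\overline{v}\,\overline{w}^{-1} \in \Z^n$. Using normality of $\Z^n$ in $G$ together with the fact that both $X_\pi$ and $X_\mu^{-1}$ lie in $\Z^n$, I can conjugate to isolate $t_\pi t_\mu^{-1}$ and deduce that $\overline{v}\,\overline{w}^{-1} \in \Z^n$ if and only if $t_\pi t_\mu^{-1} \in \Z^n$. Because $T$ is a transversal for $\Z^n\backslash G$, this is equivalent to $t_\pi = t_\mu$, which by Definition~\ref{def:Bns} is exactly condition~\eqref{item:cosetcriterion1}.

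Now assuming condition~\eqref{item:cosetcriterion1}, write $t := t_\pi = t_\mu$. Then $\overline{v}\,\overline{w}^{-1} = X_\pi\, t t^{-1}\, X_\mu^{-1} = X_\pi X_\mu^{-1}$, which in additive notation on $\Z^n$ equals $X_\pi - X_\mu$. This element lies in $F$ precisely when it can be expressed as $\sum_{j=1}^f a_j\bs{b}_j$ for integers $a_j$; pairing with $e_i$ to extract the $i$-th coordinate gives exactly the system of equations in condition~\eqref{item:cosetcriterion2}. The converse direction follows by reading this chain of equivalences backwards: if (1) and (2) hold, then $X_\pi - X_\mu \in F$, and the computation above recovers $\overline{v}\,\overline{w}^{-1} \in F$.

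The calculation itself is elementary, so I do not anticipate a serious obstacle. The only point that requires any care is the use of normality of $\Z^n$ to cleanly split "in $F$" into the transversal-level condition~\eqref{item:cosetcriterion1} and the linear condition~\eqref{item:cosetcriterion2} on $\v$ and $\w$. This split is exactly what will make the criterion useful in the sequel, since the solution set of~\eqref{item:cosetcriterion2} will be seen to be polyhedral once the auxiliary variables $a_1,\ldots,a_f$ are adjoined and then projected away.
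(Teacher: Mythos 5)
Your proposal is correct and matches the paper's own proof in all essentials: both reduce the coset condition to $\overline{v}\,\overline{w}^{-1}\in F$, use $F\subseteq\Z^n$ (the paper via $Fg\subseteq\Z^ng$, you via normality and conjugating past $X_\pi$, $X_\mu^{-1}$) to force $t_\pi=t_\mu$, and then compute $\overline{v}\,\overline{w}^{-1}=X_\pi-X_\mu\in\Z^n$ and read off membership in $F$ coordinatewise against the basis $\{\bs{b}_1,\ldots,\bs{b}_f\}$. The differences are purely cosmetic, so nothing further is needed.
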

\begin{proof}
 Recall (see \eqref{eq:wbar}) that the group elements represented by $v$ and $w$ are given by
\begin{equation*}
\overline{v}=\left\lbrace(A_1^\pi\cdot \v,A_2^\pi\cdot \v,\ldots, A_n^\pi\cdot \v)^T+(B_1^\pi,B_2^\pi,\ldots,B_n^\pi)^T\right\rbrace t_\pi\in G
\end{equation*}
and
\begin{equation*}
\overline{w}=\left\lbrace(A_1^\mu\cdot \w,A_2^\mu\cdot \w,\ldots, A_n^\mu\cdot \w)^T+(B_1^\mu,B_2^\mu,\ldots,B_n^\mu)^T\right\rbrace t_\mu\in G
\end{equation*}
respectively, where $t_\pi$ and $t_\mu$ are the chosen representatives for the cosets $\Z^n\overline{\pi}$ and $\Z^n\overline{\mu}$.

Now two words $v,w$ represent elements of the same $F$-coset if and only if $\overline{v}(\overline{w})^{-1}\in H$. This is equivalent to the existence of integer coefficients $a_1,\ldots,a_f$ such that \[\overline{v}(\overline{w})^{-1}=\sum_{j=1}^fa_j\bs{b}_j.\]

Suppose that our words $v$ and $w$ do represent the same $F$-coset. Since $Fg\subset\Z^ng$ for any $g\in G$, $v$ and $w$ represent the same $\Z^n$-coset, so $t_\pi=t_\mu$, which is precisely condition \eqref{item:cosetcriterion1}.

We have
\begin{align*}
\overline{v}(\overline{w})^{-1} &= \left\lbrace(A_1^\pi\cdot \v,A_2^\pi\cdot \v,\ldots, a_n^\pi\cdot \v)^T+(B_1^\pi,B_2^\pi,\ldots,B_n^\pi)^T\right\rbrace t_\pi \\ &\hspace{50pt}\cdot t_\mu^{-1}\left\lbrace-\left(A_1^\mu\cdot \w,A_2^\mu\cdot \w,\ldots, A_n^\mu\cdot \w\right)^T-\left(B_1^\mu,B_2^\mu,\ldots,B_n^\mu\right)^T\right\rbrace \\
&= (A_1^\pi\cdot\v+B_1^\pi-A_1^\mu\cdot\w-B_1^\mu,\ldots,A_n^\pi\cdot\v+B_n^\pi-A_n^\mu\cdot\w-B_n^\mu)^T.
\end{align*}
Thus
\begin{equation*}
(A_1^\pi\cdot\v+B_1^\pi-A_1^\mu\cdot\w-B_1^\mu,\ldots,A_n^\pi\cdot\v+B_n^\pi-A_n^\mu\cdot\w-B_n^\mu)^T = \sum_{j=1}^fa_j\bs{b}_j\in\Z^n,
\end{equation*}
which is equivalent to condition \eqref{item:cosetcriterion2}.

Conversely, if $v$ and $w$ satisfy the conditions, then $t_\pi=t_\mu$ and so 
\begin{equation*}
\overline{v}(\overline{w})^{-1}=(A_1^\pi\cdot\v+B_1^\pi-A_1^\mu\cdot\w-B_1^\mu,\ldots,A_n^\pi\cdot\v+B_n^\pi-A_n^\mu\cdot\w-B_n^\mu)^T = \sum_{j=1}^fa_j\bs{b}_j
\end{equation*}
i.e. $\overline{v}(\overline{w})^{-1}\in F$.
\end{proof}

We note the following special case, when $\pi=\mu$.
\begin{corollary}\label{cor:cosetcriterion}
	Let $v,w\in W^\pi$, $F<\Z^n$ with rank $f$ and basis $\{\bs{b}_1,\bs{b}_2,\ldots,\bs{b}_f\}\subset\Z^n$. Then $\overline{v}$ and $\overline{w}$ are in the same $F$-coset if and only if there exist integers $a_1,\ldots a_f$ so that \[A_i^\pi\cdot(\v-\w)=e_i\cdot\sum_{j=1}^fa_j\bs{b}_j\] for each $1\leq i\leq n$.
\end{corollary}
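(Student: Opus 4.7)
The plan is simply to specialize Proposition \ref{prop:cosetcriterion} to the case $\mu = \pi$. First I would observe that condition \eqref{item:cosetcriterion1} of the proposition holds automatically, since $\overline{\pi}$ and $\overline{\mu} = \overline{\pi}$ lie trivially in the same $\Z^n$-coset (indeed, $t_\pi = t_\mu$ by construction).

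Next I would expand condition \eqref{item:cosetcriterion2} of the proposition with $\mu = \pi$. It asserts the existence of integers $a_1, \ldots, a_f$ such that
\[
A_i^\pi \cdot \v + B_i^\pi - \bigl(A_i^\pi \cdot \w + B_i^\pi\bigr) = e_i \cdot \sum_{j=1}^f a_j \bs{b}_j
\]
for each $1 \leq i \leq n$. The $B_i^\pi$ constants cancel, and linearity of the scalar product in its first argument collapses the left-hand side to $A_i^\pi \cdot (\v - \w)$, yielding exactly the condition in the corollary.

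So there really is no obstacle here: the corollary is just Proposition \ref{prop:cosetcriterion} with $\pi = \mu$, once one notes the automatic verification of the coset condition and the cancellation of the constants $B_i^\pi$. The value of stating it separately is notational, since in the coset-growth argument we will frequently compare two words with the \emph{same} pattern and want to express the coset-equivalence condition purely in terms of the difference $\v - \w$ and the subgroup basis.
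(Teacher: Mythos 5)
Your proposal is correct and matches the paper's intent exactly: the paper introduces this corollary with the phrase ``We note the following special case, when $\pi=\mu$,'' and gives no further proof, leaving precisely the specialization and cancellation of the $B_i^\pi$ terms that you spell out. Nothing to add.
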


In what follows we will need a version of Dickson's Lemma.
\begin{definition}
Let $I\subseteq\{1,2,\ldots,m\}$ for some $m$, and define \[Q_I=\{x\in\Z^m\mid e_i\cdot x\geq0,\text{ if }i\in I,~ e_i\cdot x\leq 0,\text{ if }i\notin I\},\] which is a closed orthant of $\Z^m$. Define the `coordinate ordering' on $Q_I$ as follows: $x\leq_I y$ if and only if $e_i\cdot x\leq e_i\cdot y$ for $i\in I$ and $e_i\cdot x\geq e_i\cdot y$ for $i\notin I$.
\end{definition}
\begin{lemma}[Lemma A of \cite{Dickson}]\label{lem:Dickson}
Let $X\subseteq Q_I$ for some $I\subseteq\{1,2,\ldots,m\}$. Then there exists a finite subset $Y\subset X$ such that for all $x\in X$, there exists $y\in Y$ with $y\leq_I x$.
\end{lemma}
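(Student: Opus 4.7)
The plan is to reduce the statement to the classical Dickson's Lemma on $\N^m$ with the standard componentwise order, and then prove that version by induction on $m$.

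First I would observe that the map $\Phi_I \colon Q_I \to \N^m$ that negates each coordinate with index outside $I$ is a bijection carrying the coordinate ordering $\leq_I$ to the componentwise order $\leq$ on $\N^m$: for $i \in I$ both sides use the standard order, while for $i \notin I$ the coordinates of $Q_I$ are non-positive and the reverse order becomes the standard order after negation. Applying $\Phi_I$ to $X$ therefore reduces the problem to showing that every subset $X \subseteq \N^m$ admits a finite $Y \subseteq X$ such that each $x \in X$ is dominated from below by some $y \in Y$ in the componentwise order.

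I would then induct on $m$. The base case $m = 1$ is immediate, taking $Y = \{\min X\}$ for nonempty $X$ (and $Y = \emptyset$ otherwise). For the inductive step, pick any $x^* \in X$. Every $x \in X$ with $x^* \leq x$ is already dominated by $x^*$; every other $x \in X$ must have some coordinate $i$ with $x_i = k < x^*_i$. For each such pair $(i,k)$, the slice $X_{i,k} = \{x \in X : x_i = k\}$ projects bijectively and order-preservingly onto a subset of $\N^{m-1}$, so the inductive hypothesis produces a finite $Y_{i,k} \subseteq X_{i,k}$ dominating $X_{i,k}$. The union $Y = \{x^*\} \cup \bigcup_{i=1}^m \bigcup_{k=0}^{x^*_i - 1} Y_{i,k}$ is a finite subset of $X$, and by the case split it dominates every element of $X$.

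The argument is a classical one and there is no substantial obstacle. The only point requiring care is the reduction step: verifying that negating the non-$I$ coordinates really does convert $(Q_I, \leq_I)$ into $(\N^m, \leq)$, so that the familiar induction on dimension can be carried out in its usual form without having to re-derive it inside an arbitrary orthant.
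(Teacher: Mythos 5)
Your reduction is exactly the one the paper invokes: the paper states that the general-orthant version ``follows directly from symmetry'' from the classical Dickson's Lemma on $\N^m$, which is precisely what your map $\Phi_I$ (negating the coordinates outside $I$) makes explicit. The only difference is that the paper then simply cites Dickson for the $\N^m$ case, whereas you go on to give the standard inductive proof of that classical statement; your induction (fix $x^*$, slice off the finitely many hyperplanes $x_i = k$ with $k < x^*_i$, apply the inductive hypothesis to each slice) is correct and complete.
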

The result is more commonly stated for the case $I=\{1,2,\ldots,m\}$ (and so $Q_I=\N^m$) but this more general version follows directly from symmetry.

We now build a polyhedral set of minimal weight coset representatives, for each pattern $\pi$. The following is a modification of the arguments in Section $6$ of \cite{Benson}.

\begin{proposition}\label{prop:VHpi}
	Fix a pattern $\pi\in P$, and with $F$ an infinite index subgroup of $\Z^n\lhd G$, consider the set $(F\backslash G)^\pi$ of right cosets which contain an element represented by a word in $W^\pi$, i.e. \[(F\backslash G)^\pi=\{Fg\in F\backslash G\mid Fg\cap\overline{W^\pi}\neq\emptyset\}.\] Then there exists a set $V_F^\pi\subset W^\pi$ consisting of minimal-weight (amongst $W^\pi$) representatives for every coset in $(F\backslash G)^\pi$, with the property that $\overrightarrow{V_F^\pi}$ is a polyhedral set.
\end{proposition}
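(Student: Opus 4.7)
The plan is to follow the strategy of Benson's construction of $U^\pi$ (Theorem \ref{thm:Upi}), replacing the element-equality criterion by the coset criterion of Corollary \ref{cor:cosetcriterion}. That corollary reformulates ``$\overline v$ and $\overline w$ lie in the same $F$-coset'' for $v,w\in W^\pi$ as the condition that $\v-\w$ belongs to the sublattice
\[
D_F^\pi := \left\{ d \in \Z^{m(\pi)} \;\middle|\; \bigl(A_1^\pi\cdot d, \ldots, A_n^\pi\cdot d\bigr)^T \in F \right\},
\]
which is a subgroup of $\Z^{m(\pi)}$ since $F$ is. Hence the $F$-coset equivalence restricted to $\overrightarrow{W^\pi}=\N^{m(\pi)}$ is just translation by elements of $D_F^\pi$.

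To pick a unique representative per coset, I would fix a total order $\prec$ on $\N^{m(\pi)}$ that compares first by weight $\w\mapsto A_{n+1}^\pi\cdot\w$ and then breaks ties lexicographically on the coordinates. Let $V_F^\pi$ be the set of $w\in W^\pi$ whose vector $\w$ is the $\prec$-least element of $(\w+D_F^\pi)\cap\N^{m(\pi)}$; by construction this produces exactly one minimum-weight representative per coset in $(F\backslash G)^\pi$. The remaining task is to show $\overrightarrow{V_F^\pi}$ is polyhedral. By Proposition \ref{prop:polyclosed} it suffices to show its complement in $\N^{m(\pi)}$ is polyhedral, and this complement is the set of $\w$ admitting some $d\in D_F^\pi\setminus\{0\}$ with $\w+d\in\N^{m(\pi)}$ and $\w+d\prec\w$.

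The key step is to decompose the set of such ``bad'' differences $d$ according to the orthant $Q_I\subseteq\Z^{m(\pi)}$ containing $d$ (finitely many choices of $I\subseteq\{1,\ldots,m(\pi)\}$) and according to which clause of $\prec$ causes $\w+d\prec\w$: either strict weight decrease $A_{n+1}^\pi\cdot d<0$, or $A_{n+1}^\pi\cdot d=0$ together with the first nonzero coordinate of $d$ occurring at some position $k$ and being negative. Each resulting piece is carved out of $D_F^\pi\cap Q_I$ by linear (in)equalities. For any $d$ in such a piece, the condition $\w+d\in\N^{m(\pi)}$ only restricts the coordinates of $\w$ indexed by $I^c$, via $w_i\geq -d_i$. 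Projecting $-d$ onto those coordinates produces a subset of $\N^{|I^c|}$, to which Dickson's Lemma (Lemma \ref{lem:Dickson}) assigns finitely many $\leq$-minimal elements; the upward closure of these is then a finite union of shifted orthants, hence polyhedral in $\N^{|I^c|}$. Pulling back to $\N^{m(\pi)}$ using Proposition \ref{prop:polyaffine} exhibits the set of $\w$ admitting a bad $d$ in that piece as polyhedral, and taking the finite union over all pieces writes the complement of $\overrightarrow{V_F^\pi}$ as a polyhedral set.

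The step I expect to be the most delicate is the tie-breaking clause $A_{n+1}^\pi\cdot d=0$: the bad-difference set there must be stratified into up to $m(\pi)$ ``lex-slices'' indexed by the first discriminating coordinate (and further by compatibility with the orthant $I$), and one must verify that Dickson's Lemma applies cleanly on each slice and that the resulting polyhedra really cover every $\w$ admitting such a tie-breaking bad $d$. Once this (essentially routine) bookkeeping is in hand, the polyhedrality of $\overrightarrow{V_F^\pi}$ follows by combining Propositions \ref{prop:polyclosed} and \ref{prop:polyaffine}.
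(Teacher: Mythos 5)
Your proposal is correct and takes essentially the same route as the paper's proof: both define a total ``weight-then-lexicographic'' order on each $F$-coset's worth of vectors in $\N^{m(\pi)}$ (the paper tie-breaks by scalar products against standard basis vectors chosen to span $\Z^{m(\pi)}$ alongside $A_{n+1}^\pi$, which amounts to the same thing), both characterise non-minimality via a set of ``decreasing differences'' lying in the lattice cut out by Corollary~\ref{cor:cosetcriterion}, and both apply Dickson's Lemma (Lemma~\ref{lem:Dickson}) orthant-by-orthant to exhibit $\overrightarrow{V_F^\pi}$ as the complement in $\N^{m(\pi)}$ of a finite union of shifted orthants. The extra stratification you add by ``which clause of $\prec$ fires'' is harmless bookkeeping that the paper avoids by letting Dickson's Lemma handle the whole translation set $\cT\cap Q_I$ at once, but it changes nothing essential.
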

\begin{proof}
	We will define an ordering on words in $W^\pi$ that is consistent with the weight ordering, and yields a unique minimal representative for each coset in $(F\backslash G)^\pi$.
	
	As above, fix a basis $\{\bs{b}_1,\ldots,\bs{b}_f\}$ for $F$. Recall the vectors $A_1^\pi,\ldots,A_{n+1}^\pi\in\Z^m$. Choose standard basis vectors $A_{n+2}^\pi,\ldots,A_K^\pi$ so that the set $\{A_{n+1}^\pi,\ldots,A_K^\pi\}$ spans $\Z^m$. Define an order on the words of $W^\pi$ as follows. We will write $v\leq_\pi w$ if and only if either $v=w$ or there exist integers $a_1,\ldots,a_f$, and $0\leq i\leq K-n$ such that
	\begin{align*}
	A_k^\pi\cdot(\v-\w)&=e_k\cdot\sum_{j=1}^f a_j\bs{b}_j,\,\text{ for } 1\leq k\leq n,\\
	A_k^\pi\cdot(\v-\w)&=0,\,\text{ for } n+1\leq k< n+i,\,\text{ and}\\
	A_{n+i}^\pi\cdot(\v-\w)&>0.
	\end{align*}
	We show that this is a partial order on $W^\pi$. Firstly, note that the ordering is reflexive by definition. Next, we show transitivity. Suppose $u,v,w\in W^\pi$ with $u\leq_\pi v\leq_\pi w$. If $u=v$ or $v=w$ then clearly $u\leq_{\pi}w$, so suppose $u\neq v\neq w$. So we have integers $a_1,\ldots,a_f,a_1'\ldots,a_f'$ and $0\leq i,i'\leq K-n$ with 
	\begin{align*}
	A_k^\pi\cdot(\u-\v)&=e_k\cdot\sum_{j=1}^f a_j\bs{b}_j,\,\text{ for } 1\leq k\leq n,\\
	A_k^\pi\cdot(\u-\v)&=0,\,\text{ for } n+1\leq k< n+i,\,\text{ and}\\
	A_{n+i}^\pi\cdot(\u-\v)&>0
	\end{align*}
	and 
	\begin{align*}
	A_k^\pi\cdot(\v-\w)&=e_k\cdot\sum_{j=1}^f a'_j\bs{b}_j,\,\text{ for } 1\leq k\leq n,\\
	A_k^\pi\cdot(\v-\w)&=0,\,\text{ for } n+1\leq k< n+i',\,\text{ and}\\
	A_{n+i'}^\pi\cdot(\v-\w)&>0.
	\end{align*}
	Then for $0\leq k\leq n$ we have \[A_k^\pi\cdot(\u-\w)=A_k^\pi\cdot(\u-\v)+A_k^\pi\cdot(\v-\w)=e_k\cdot\sum_{j=1}^f (a_j+a_j')\bs{b}_j\] by linearity of the scalar product. Furthermore, $A_k^\pi\cdot(\u-\w)=0$ for $n+1\leq k<\min(i,i')$ and $A_k^\pi\cdot(\u-\w)>0$ for $k=\min(i,i')$, and thus $u\leq_\pi w$, so $\leq_\pi$ is transitive. For antisymmetry, note that if $v\leq_\pi w$ and $w\leq_\pi v$ then either $v=w$ or we must have $A^\pi_k\cdot(\v-\w)=A^\pi_k\cdot(\w-\v)=0$ for each $n+1\leq k\leq K$, and since the corresponding $A^\pi_k$s span $\Z^m$, we must have $\v=\w$, i.e. $v=w$. Thus the order is a well-defined partial order (although not a total order).
	
	If we restrict ourselves to the words representing a single $H$-coset, this becomes a well-ordering. To see this, suppose we have an infinite descending chain of words in $W^\pi$ that represent the same coset: 
	\begin{equation*}
		w_1\geq_\pi w_2\geq_\pi\cdots.
	\end{equation*}
	Since $A_k^\pi\in\N^m$ for $k>n$ and $\w_i\in\N^m$ for all $i$, the sequences \[A_k^\pi\cdot \w_1\geq A_k^\pi\cdot\w_2\geq\cdots\] for each $k>n$ consist of non-negative integers and so must stabilize, say after $i_k$ steps. Let $i_{\max}=\max_{n<k\leq K}(i_k)$. Then $A_k^\pi\cdot \w_{i_{\max}}=A_k^\pi\cdot \w_{i_{\max}+j}$ for any positive integer $j$, and all $n<k\leq K$. Therefore since the vectors $A_k^\pi$ for $k>n$ span $\Z^m$, $w_{i_{\max}}=w_{i_{\max+j}}$ and the sequence stabilizes. Note that two words representing the same coset can always be compared under $\leq_\pi$, since Corollary \ref{cor:cosetcriterion} implies there are integers $a_1,\ldots,a_f$ satisfying the definition. Thus there is a unique $\leq_\pi$-minimal word in $W^\pi$ that represents each coset in $(F\backslash G)^\pi$.
	
	Note that if $v\leq_\pi w$ then $A_{n+1}^\pi\cdot\v\leq A_{n+1}^\pi\cdot\w$ and so $\omega(v)\leq\omega(w)$. Thus the unique $\leq_\pi$-minimal element in $W^\pi$ that represents a given $F$-coset is also a minimal weight coset representative (amongst $W^\pi$). Let $V_F^\pi$ denote the set of all $\leq_\pi$-minimal representatives in $W^\pi$, that is
	\begin{equation}\label{eq:VHpi}
	V_F^{\pi}:=\left\{w\in W^\pi\,\middle\vert\,\text{If }v\in W^\pi\text{ s.t. }\overline{v}\in F\overline{w}\text{ then }w\leq_\pi v\right\}.
	\end{equation} To finish the proof, we need to show that $V_F^\pi$ corresponds to a polyhedral set in $\Z^m$.
	
	An element $\tau\in\Z^m$ will be called a \emph{translation} (with respect to $F$, $\pi$) if there exist integers $a_1,\ldots,a_f$ and $0\leq i\leq K-n$ such that $A_k^\pi\cdot\tau=e_k\cdot\sum_{j=1}^f a_j\bs{b}_j$ for $1\leq k\leq n$ and \[A_{n+1}^\pi\cdot\tau=\cdots=A_{n+i-1}^\pi\cdot\tau=0,~A_{n+i}^\pi\cdot\tau>0.\] Let $\cT$ denote the set of all such translations. Suppose $v,w\in W^\pi$. Then it is clear that $\overline{w}\in F\overline{v}$ with $w\leq_\pi v$ if and only if there exists some $\tau\in\cT$ with $\v=\w+\tau$. The set $\cT$ is contained in $\Z^m$. Consider $\cT\cap Q_I$ for some $I\subseteq\{1,\ldots,m\}$. By Lemma \ref{lem:Dickson}, there exists a finite set $\cT_I\subset\cT\cap Q_I$ such that each element $\tau\in\cT\cap Q_I$ has a bound $\tau_0\in\cT_I$ such that $\tau_0\leq_I\tau$. Let $\cT_{\min}=\bigcup_I \cT_I$, the union of the minimal translations across all orthants. We now claim that \[\overrightarrow{V_F^\pi}=\N^m\setminus\bigcup_{\tau\in\cT_{\min}}(\tau+\N^m).\] It is not hard to see that this is a polyhedral set, which proves the Proposition.
	
	To see the claim, first suppose that $v\in V_F^\pi$ but $\v\notin\N^m\setminus\bigcup_{\tau\in\cT_{\min}}(\tau+\N^m)$. So $\v\in\tau+\N^m$ for some $\tau\in\cT_{\min}$, i.e. $\v=\tau+\w$ for some $\w\in\N^m$. This implies that there is some $w\in W^\pi$ which shares an $F$-coset with $v$ such that $w\leq_\pi v$. But this implies that $v$ is not minimal, contradicting the assumption that $v\in V^\pi_F$.
	
	Conversely, suppose that $\v\in\N^m\setminus\bigcup_{\tau\in\cT_{\min}}(\tau+\N^m)$ and $v\notin V^\pi_F$. So there exists some $\tau\in\cT$ and $\v_0\in V^\pi_F$, with $\v=\v_0+\tau$ and $\overline{v_0}\in F\overline{v}$. In other words $\v-\v_0$ is a translation. Choose $I$ so that $\v-\v_0\in Q_I$, and then $\tau_0\in\cT_I$ so that $\tau_0\leq\v-\v_0$. We claim that $\v-\tau_0\in\N^m$, i.e. $\v\in\tau_0+\N^m$, contradicting our assumption. Indeed, for $i\in I$ we have $e_i\cdot\tau_0\leq e_i\cdot\tau$, so $e_i\cdot(\v-\tau_0)\geq e_i\cdot(\v-\tau)=e_i\cdot\v_0\geq 0$, and for $i\notin I$ we have $e_i\cdot(\v-\tau_0)\geq e_i\cdot\tau_0\geq 0$. So for all $i$, $e_i\cdot(\v-\tau_0)\geq 0$, and hence $\v-\tau\in\N^m$ as claimed.
\end{proof}

Since $Fg\subset\Z^ng$ for any $g\in G$, any two words that represent the same $F$-coset must lie in the same $\Z^n$-coset. So we consider each $\Z^n$-coset separately. Section \ref{sec:prelim} tells us that for a given $\Z^n$-coset, say $\Z^nt$ for $t\in T$, there is a finite set of patterns $P_t$, over the extended generating set $\wt{S}$, whose patterned sets contain representatives for all the elements of the coset (and no other cosets). We take the corresponding polyhedral sets $\overrightarrow{V^\pi_F}$ from equation \eqref{eq:VHpi} for each $\pi\in P_t$, and combine them to find a language of representatives for the $F$-cosets within $\Z^nt$. We may have pairs of words with different patterns that both represent the same coset, but we only wish to count the minimal one. To prove Theorem \ref{thm:cosetpart}, we show that these overlaps between the $\overrightarrow{V^\pi_F}$s are polyhedral, so can be removed without losing rationality.

\begin{definition}
Let $\pi,\mu\in P_t$ for some $t\in T$. Define the set $R^{\pi,\mu}$ (resp. $R_*^{\pi,\mu}$) consisting of all those elements of $V_F^\pi$ where there is an element of $V_F^\mu$ of strictly smaller (resp. equal) weight that represents the same coset:
\begin{align*}
R^{\pi,\mu}:=&\{v\in V_F^\pi\mid\exists u\in V_F^\mu,~\overline{u}\in F\overline{v},~\omega(u)<\omega(v)\}\\
R_*^{\pi,\mu}:=&\{v\in V_F^\pi\mid\exists u\in V_F^\mu,~\overline{u}\in F\overline{v},~\omega(u)=\omega(v)\}.
\end{align*}
\end{definition}

We need to discard all of $R^{\pi,\mu}$ for every pair $\pi\neq\mu$, since we only want minimal words. If there exist two of more minimal weight representatives for the same coset with equal weight, we must choose exactly one and discard the rest. We make the following definition.

\begin{definition}\label{def:UHpi}
Pick a total order on the finite set $P_t$, denoted $\pi_1<\pi_2<\cdots$. Let
\begin{equation*}
U_F^{\pi_k}:=V_F^{\pi_k}\setminus\left[\bigcup_{i\neq k}R^{\pi_k,\pi_i}\cup\bigcup_{j<k}R^{\pi_k,\pi_j}_*\right].
\end{equation*}
\end{definition}

So $U_F^{\pi_k}$ consists of those minimal-weight coset representatives in $W^{\pi_k}$ where there are no representatives of the same coset with smaller weight and a different pattern, and wherever there are multiple representatives with equal weight we choose based on the order on $P_t$.

\begin{proof}[Proof of Theorem \ref{thm:cosetpart}]
We claim that $\overrightarrow{U_F^{\pi_k}}\subset\N^{m(\pi)}$ is a polyhedral set for each $\pi_k$. Then the disjoint union of the sets $U_F^{\pi_k}$ for each $\pi_k\in P_t$, and each $t\in T$, is a finite disjoint union of rationally growing languages, forming a set of minimal weight representatives for the cosets $F\backslash G$, which will prove the Theorem.

To prove that $\overrightarrow{U_F^{\pi_k}}$ is polyhedral, it is enough to show that $\overrightarrow{R^{\pi,\mu}}$ and $\overrightarrow{R_*^{\pi,\mu}}$ are polyhedral for any $\pi,\mu$, since $\overrightarrow{U_F^{\pi_k}}$ is then obtained from polyhedral sets via finite unions and set complement, so is itself polyhedral.

Let $\mathbf{1}_j\in\Z^{2n+2+f}$ be the vector with a $1$ at the $j$th entry and zeroes everywhere else. Define the vectors
\begin{equation*}
E_i=\mathbf{1}_i-\mathbf{1}_{i+n+1}+\begin{pmatrix}0\\ \vdots \\ 0 \\ -e_i\cdot\bs{b}_1 \\ -e_i\cdot\bs{b}_2\\ \vdots \\-e_i\cdot\bs{b}_f\end{pmatrix}
\begin{tabular}{l}
$\left.\lefteqn{\phantom{\begin{matrix} 0\\ \vdots \\ 0 \end{matrix}}}\right\}2n+2$ zeroes \\
$\left.\lefteqn{\phantom{\begin{matrix} -e_i\cdot\bs{b}_1 \\ -e_i\cdot\bs{b}_2\\ \vdots \\ -e_i\cdot\bs{b}_f \end{matrix}}}\right\}f$ rows \\
\end{tabular}
\end{equation*}
for each $1\leq i\leq n$, and let $E_{n+1}=\mathbf{1}_{n+1}-\mathbf{1}_{2n+2}$. Then define the polyhedral sets \[\Phi:=\bigcap_{i=1}^n\left\{\phi\in\Z^{2n+2+f}\mid\phi\cdot E_i=0\right\}\cap\left\{\phi\in\Z^{2n+2+f}\mid\phi\cdot E_{n+1}>0\right\}\] and \[\Phi_*:=\bigcap_{i=1}^n\left\{\phi\in\Z^{2n+2+f}\mid\phi\cdot E_i=0\right\}\cap\left\{\phi\in\Z^{2n+2+f}\mid\phi\cdot E_{n+1}=0\right\}.\]

Let $\cE^\pi\colon\overrightarrow{V_F^\pi}\rightarrow\Z^{n+1}$ denote the integral affine transformation \[\cE^\pi\colon\w\mapsto\begin{pmatrix} A_1^\pi\cdot\w+B_1^\pi \\ A_2^\pi\cdot\w+B_2^\pi \\ \vdots \\ A_{n+1}^\pi\cdot\w+B_{n+1}^\pi \end{pmatrix}\] for any pattern $\pi$ (and write $(\cE^\pi)^{-1}X$ for the preimage in $\overrightarrow{V_F^\pi}$ of any $X\subseteq\Z^{n+1}$). For any $k'>k$, write $p_k\colon\Z^{k'}\rightarrow\Z^k$ for the projection onto the first $k$ coordinates. We will show that
\begin{equation}\label{eq:polyhedraloverlap}
\overrightarrow{R^{\pi,\mu}}=(\cE^\pi)^{-1}p_{n+1}\left[\left(\cE^\pi(\overrightarrow{V_F^\pi})\times\cE^\mu(\overrightarrow{V_F^\mu})\right)\cap p_{2n+2}(\Phi)\right],
\end{equation}
which is a polyhedral set since projection is an affine transformation. Indeed, suppose that $v\in R^{\pi,\mu}$. So there exists $u\in V_F^\mu$ such that $\overline{u}\in F\overline{v}$ and $\omega(u)<\omega(v)$. By Corollary \ref{cor:cosetcriterion}, there exist integers $a_1,\ldots,a_f$ such that \[A_i^\pi\cdot\v+B_i^\pi-(A_i^\mu\cdot\u+B_i^\mu)=e_i\cdot\sum_{j=1}^f a_j\bs{b}_j\] for each $1\leq i\leq n$, and $A_{n+1}^\pi\cdot\v+B_{n+1}^\pi>A_{n+1}^\mu+B_{n+1}^\mu$. In other words \[\begin{pmatrix} A_1^\pi\cdot\v+B_1^\pi \\ A_2^\pi\cdot\v+B_2^\pi \\ \vdots \\ A_{n+1}^\pi\cdot\v+B_{n+1}^\pi \\ A_1^\mu\cdot\u+B_1^\mu \\ A_2^\mu\cdot\u+B_2^\mu \\ \vdots \\ A_{n+1}^\mu\cdot\u+B_{n+1}^\mu \\ a_1 \\ \vdots \\ a_f \end{pmatrix}\in\Phi,\] and hence $\left(\cE^\pi(\v),\cE^\mu(\u)\right)\in p_{2n+2}(\Phi)$, so $\v$ is contained in the right hand side of \eqref{eq:polyhedraloverlap}.

Conversely, let $\v\in\N^{m(\pi)}$ be contained in the right hand side of \eqref{eq:polyhedraloverlap}. Thus \[\cE^\pi(\v)\in p_{n+1}\left[\left(\cE^\pi(\overrightarrow{V_F^\pi})\times\cE^\mu(\overrightarrow{V_H^\mu})\right)\cap p_{2n+2}(\Phi)\right]\] so there exists $\bs{z}\in\Z^{n+1}$ with  \[\left(\cE^\pi(\v),\bs{z}\right)\in\left(\cE^\pi(\overrightarrow{V_F^\pi})\times\cE^\mu(\overrightarrow{V_F^\mu})\right)\cap p_{2n+2}(\Phi).\] That is, there exists $u\in V_F^\mu$ with $\bs{z}=(A_1^\mu\cdot\u+B_1^\mu,\ldots,A_{n+1}^\mu\cdot\u+B_{n+1}^\mu)^T$ and there exist integers $a_1,\ldots,a_f$ such that $(\cE^\pi(\v),\bs{z},a_1,\ldots,a_f)^T\in\Phi$, and together this means that \[\begin{pmatrix} A_1^\pi\cdot\v+B_1^\pi \\ A_2^\pi\cdot\v+B_2^\pi \\ \vdots \\ A_{n+1}^\pi\cdot\v+B_{n+1}^\pi \\ A_1^\mu\cdot\u+B_1^\mu \\ A_2^\mu\cdot\u+B_2^\mu \\ \vdots \\ A_{n+1}^\mu\cdot\u+B_{n+1}^\mu \\ a_1 \\ \vdots \\ a_f \end{pmatrix}\in\Phi.\] From the definition of $\Phi$, this implies that \[A_i^\pi\cdot\v+B_i^\pi-\left(A_i^\mu\cdot\u+B_i^\mu\right) = e_i\cdot\sum_{j=1}^fa_j\bs{b}_j\] for each $1\leq i\leq n$ and $A_{n+1}^\pi\cdot\v+B_{n+1}^\pi>A_{n+1}^\mu\cdot\u+B_{n+1}^\mu$. So $v\in V_F^\pi$ and there exists $u\in V_F^\mu$ with $\overline{u}\in F\overline{v}$ and $\omega(u)<\omega(v)$, i.e. $v\in R^{\pi,\mu}$, and so $\overrightarrow{R^{\pi,\mu}}$ has the polyhedral form \eqref{eq:polyhedraloverlap} as claimed.

In an exactly analogous way, \[\overrightarrow{R_*^{\pi,\mu}}=(\cE^\pi)^{-1}p_{n+1}\left[\left(\cE^\pi(\overrightarrow{V_F^\pi})\times\cE^\mu(\overrightarrow{V_F^\mu})\right)\cap p_{2n+2}(\Phi_*)\right].\]

\end{proof}

We now use the previous result to prove Theorem \ref{thm:cosetfull}, that is, to show that for an arbitrary subgroup $H\leq G$, the set of right cosets $H\backslash G$ has rational growth with respect to any choice of weighted generating set for $G$. The proof relies on the understanding of the structure of subgroups in Lemma \ref{lem:subgroupcriteria} and the rationality of coset growth for free abelian subgroups in Theorem \ref{thm:cosetpart}.
\begin{proof}[Proof of Theorem \ref{thm:cosetfull}]
First, we consider the coset structure of $G$. Let $\Z^n$ be the maximal free abelian normal subgroup of $G$. Then we have $H\cap\Z^n\lhd H$, with finite index $c\leq d$. Fix a choice of transversal $\{h_1,\ldots,h_c\}$ for the cosets $(H\cap\Z^n)\backslash H$. As in Lemma \ref{lem:subgroupcriteria}, we extend this to a transversal for $\Z^n\backslash G$, write $T=\{t_1,\ldots,t_d\}\supseteq\{h_1,\ldots,h_c\}$. Suppose that the free abelian group $H\cap\Z^n$ has rank $f$, and fix a basis $\{\bs{b}_1,\ldots,\bs{b}_f\}\subset\Z^n$.

Consider a coset $Hg\in H\backslash G$. Following the above discussion, we may decompose $H$ as the finite union of its $H\cap\Z^n$-cosets. We may also write $g=(g_1,\ldots,g_n)^Tt$ for some $g_i\in\Z$ and $t\in T$. Therefore \[Hg=\left(\bigcup_{j=1}^c(H\cap\Z^n)h_j\right)\begin{pmatrix}g_1 \\ \vdots \\ g_n\end{pmatrix}t.\] Since $H\cap\Z^n$ is also a subgroup of $G$, contained in $\Z^n$, Theorem \ref{thm:cosetpart} provides a minimal weight representative for each coset of $H\cap\Z^n$ in $G$ of the form $(H\cap\Z^n)h_jg$, and therefore a collection of $c$ candidates for a minimal-weight representative for $Hg$, since the minimal-weight representative for $Hg$ is one of the $c$ minimal-weight representatives for the cosets $(H\cap\Z^n)h_jg$. We will express these candidates as $c$-tuples of words, (whose patterns together make $c$-fold patterns, see Section \ref{sec:N-foldpatterns}), and show that they correspond to polyhedral sets, from which rationality will follow. 

Fix a $c$-fold pattern $\bs{\pi}=(\pi_1,\ldots,\pi_c)\in P^c$ (recalling the definition of $P$ from Definition \ref{def:finitepatterns}), and define
\begin{align}\label{eq:Vpi}
V(\bs{\pi})=\left\{\left(w^{(1)},\ldots,w^{(c)}\right)\right.&\left.\in U^{\pi_1}_{H\cap\Z^n}\times\cdots\times U^{\pi_c}_{H\cap\Z^n}\,\middle\vert\,\right. \\ &\left. \exists g\in G\text{ s.t. }\overline{w^{(j)}}\in(H\cap\Z^n)h_jg,~1\leq j\leq c\right\},\nonumber
\end{align}
where $U_{H\cap\Z^n}^{\pi_k}$ is the set of minimal representatives for the cosets of $H\cap\Z^n$ in $G$ as defined in Definition \ref{def:UHpi}, where $H\cap\Z^n$ plays the role of $F$. Each element of $V(\bs{\pi})$ consists of a $c$-tuple of candidates for a minimal weight representative of the coset $Hg$. By Lemma \ref{lem:minimaltuplerep}, if $\overrightarrow{V(\bs{\pi})}$ is polyhedral then there is a language $\cL_{\bs{\pi}}$ of minimal representatives for those cosets represented by $V(\bs{\pi})$, which grows rationally. Every element of $G$ has a minimal-weight representative with a pattern in $P$ (by definition of $P$), and so in particular every coset has a minimal-weight representative with pattern in $P$, and is therefore represented in some $V(\bs{\pi})$. Thus the union $\bigcup_{\bs{\pi}\in P^c}\cL_{\bs{\pi}}$ forms a language of minimal weight representatives for the set of cosets $H\backslash G$. Since $P^c$ is finite, this union is a finite union of polyhedral sets, and thus has rational growth.

We now show that $\overrightarrow{V(\bs{\pi})}$ is indeed a polyhedral set for each $\bs{\pi}\in P^c$, which will complete the proof. For some tuple in $V(\bs{\pi})$, consider the element $g$ as in equation \eqref{eq:Vpi}. This can be expressed as $(g_1,\ldots,g_n)^Tt$ for some $g_i\in\Z$ and $t\in T$.

So we can decompose $V(\bs{\pi})$  as a finite union $\bigcup_{t\in T}V(\bs{\pi},t)$ where 
\begin{align*}
V(\bs{\pi},t)=\left\{\left(w^{(1)},\ldots,w^{(c)}\right)\right.&\left.\in U^{\pi_1}_{H\cap\Z^n}\times\cdots\times U^{\pi_c}_{H\cap\Z^n}\,\middle\vert\, \right.\\
 &\left.\exists g\in\Z^nt\text{ s.t. }\overline{w^{(j)}}\in(H\cap\Z^n)h_jg,~1\leq j\leq c\right\}.
\end{align*}

We wish to write an element of $(H\cap\Z^n)h_j(g_1,\ldots,g_n)^Tt$ in the standard form defined in section \ref{sec:notation}. Recall from section \ref{sec:conjconstants} that for any element $s\in T$, we have a matrix $\Delta_s$ so that $s(a_1,\ldots,a_n)^Ts^{-1}=\Delta_s(a_1,\ldots,a_n)^T$ for any integers $a_i$. For two coset representatives $s,t\in T$, their product $st$ will not necessarily be in $T$. So let $x_{st}\in\Z^n$ and $\tau_{st}\in T$ be such that $st=x_{st}\tau_{st}$.

Now suppose that \[\gamma\in (H\cap\Z^n)h_j\begin{pmatrix}g_1 \\ g_2 \\ \vdots \\ g_n\end{pmatrix}t\subseteq H\begin{pmatrix}g_1 \\ g_2 \\ \vdots \\ g_n\end{pmatrix}t.\] So there exist integers $\lambda_1^{(j)},\ldots,\lambda_f^{(j)}$, so that 
\begin{align*}
\gamma&=\left(\sum_{k=1}^f\lambda_k^{(j)}\bs{b}_k\right)h_j\begin{pmatrix}g_1 \\ g_2 \\ \vdots \\ g_n\end{pmatrix}t\\
&=\left(\sum_{k=1}^f\lambda_k^{(j)}\bs{b}_k+\Delta_{h_j}\begin{pmatrix}g_1 \\ g_2 \\ \vdots \\ g_n\end{pmatrix}\right)h_jt\\
&=\left(\sum_{k=1}^f\lambda_k^{(j)}\bs{b}_k+\Delta_{h_j}\begin{pmatrix}g_1 \\ g_2 \\ \vdots \\ g_n\end{pmatrix}+x_{h_jt}\right)\tau_{h_jt},
\end{align*}
which is in the standard form. Thus for $w^{(j)}\in W^{\pi_j}$, we have $\overline{w^{(j)}}\in(H\cap\Z^n)h_jg$ for some $g=(g_1,\ldots,g_n)^Tt$ if and only if
\begin{enumerate}
	\item there exists $\lambda_k^{(j)}\in\Z$ for each $1\leq k\leq f$ such that
	\begin{equation*}
	\begin{pmatrix} A_1^{\pi_j}\cdot\overrightarrow{w^{(j)}} \\ A_2^{\pi_j}\cdot\overrightarrow{w^{(j)}} \\ \vdots \\ A_n^{\pi_j}\cdot\overrightarrow{w^{(j)}} \end{pmatrix} +\begin{pmatrix} B_1^{\pi_j} \\ B_2^{\pi_j} \\ \vdots \\ B_n^{\pi_j}\end{pmatrix} = \sum_{k=1}^f\lambda_k^{(j)}\bs{b}_k + \Delta_{h_j}\begin{pmatrix}g_1 \\ g_2 \\ \vdots \\ g_n\end{pmatrix} + x_{h_jt},
	\end{equation*}
	and
	\item $t_{\pi_j}=\tau_{h_jt}$,
\end{enumerate} 
(where $\overline{\pi_j}=(B_1^{\pi_j},\ldots,B_n^{\pi_j})^Tt_{\pi_j}$ as before). The first condition can be restated as follows. There exists $\lambda_k^{(j)}\in\Z$ for each $1\leq k\leq f$ such that
\begin{equation*}
A_i^{\pi_j}\cdot\overrightarrow{w^{(j)}}+B_i^{\pi_j}=e_i\cdot\left(\sum_{k=1}^f\lambda_k^{(j)}\bs{b}_k + \Delta_{h_j}\begin{pmatrix}g_1 \\ g_2 \\ \vdots \\ g_n\end{pmatrix} + x_{h_jt}\right)
\end{equation*}
for each $1\leq i\leq n$ and $1\leq j\leq c$. This can be re-written as
\begin{equation}\label{eq:cosetcondition}
A_i^{\pi_j}\cdot\overrightarrow{w^{(j)}} + \sum_{k=1}^f\lambda_k(-e_i\cdot\bs{b}_k)-e_i\cdot\Delta_{h_j}\begin{pmatrix}g_1 \\ g_2 \\ \vdots \\ g_n\end{pmatrix}=-B_i^{\pi_j}+e_i\cdot x_{h_jt}.
\end{equation}

Then we may rewrite $V(\bs{\pi},t)$ as follows.
\begin{align*}
V(\bs{\pi},t)=&\left\{\left(w^{(1)},\ldots,w^{(c)}\right)\in U^{\pi_1}_{H\cap\Z^n}\times\cdots\times U^{\pi_c}_{H\cap\Z^n}\,\middle\vert\,\exists(g_1,\ldots,g_n)^T\in\Z^n, \right.\\
&\left.\lambda_k^{(j)}\in\Z^n, \text{ s.t. each }w^{(j)}\text{ satisfies \eqref{eq:cosetcondition} for each }1\leq i\leq n\right\}
\end{align*}

For a fixed $\bs{\pi}$ and $t\in T$, we define vectors $L_i^j\in\Z^{m(\bs{\pi})+fc+n}$ as follows:
\begin{equation*}
L_i^j:=\begin{pmatrix} 0 \\ \vdots \\ 0 \\ A_i^{\pi_j} \\ 0 \\ \vdots \\ 0 \\ 0 \\ \vdots \\ 0 \\ -e_i\cdot\bs{b}_1 \\ \vdots \\ -e_i\cdot\bs{b}_f \\ 0 \\ \vdots \\ 0 \\ -\mathbf{1}_i\Delta_{h_j} \end{pmatrix}
\begin{tabular}{l}
$\left.\lefteqn{\phantom{\begin{matrix} 0 \\ \vdots \\ 0 \end{matrix}}}\right\}\sum_{k=1}^{j-1}m(\pi_j)$ zeroes \\
$\lefteqn{\phantom{\begin{matrix} A_i^{\pi_j} \end{matrix}}}m(\pi_j)$ rows \\
$\left.\lefteqn{\phantom{\begin{matrix} 0 \\ \vdots \\ 0 \end{matrix}}}\right\}\sum_{k=j+1}^{c}m(\pi_j)$ zeroes \\
$\left.\lefteqn{\phantom{\begin{matrix} 0 \\ \vdots \\ 0 \end{matrix}}}\right\}f(j-1)$ zeroes \\
$\left.\lefteqn{\phantom{\begin{matrix} -e_i\cdot\bs{b}_1 \\ \vdots \\ -e_i\cdot\bs{b}_f \end{matrix}}}\right\}f$ rows \\
$\left.\lefteqn{\phantom{\begin{matrix} 0 \\ \vdots \\ 0 \end{matrix}}}\right\}f(c-j)$ zeroes \\
$\lefteqn{\phantom{\begin{matrix}-\mathbf{1}_i\Delta_{h_j} \end{matrix}}}n$ rows \\
\end{tabular}
\end{equation*}
where $\mathbf{1}_i\Delta_{h_j}$ is the matrix product of the row vector with $1$ at the $i$th position and zeroes elsewhere, and $\Delta_{h_j}$.

Now, noting that \[\mathbf{1}_i\Delta_{h_j}\cdot\begin{pmatrix}g_1\\g_2\\ \vdots \\ g_n\end{pmatrix} = e_i\cdot\left(\Delta_{h_j}\begin{pmatrix} g_1\\g_2\\ \vdots \\ g_n\end{pmatrix}\right),\] we see that a $c$-tuple of words $\left(w^{(1)},\ldots,w^{(c)}\right)$ satisfies \eqref{eq:cosetcondition} for some $i$ precisely when there exists $v\in\Z^{m(\bs{\pi})+fc+n}$ such that \[L_i^j\cdot v=-B_i^{\pi_j}+e_i\cdot x_{h_jt}\] and $p_{m(\bs{\pi})}(v)=\left(\overrightarrow{w^{(1)}},\ldots,\overrightarrow{w^{(c)}}\right)\in\Z^{m(\bs{\pi})}$.
Therefore we have
\begin{equation*}
\overrightarrow{V(\bs{\pi},t)}=p_{m(\bs{\pi})}\left(\bigcap_{j=1}^c\bigcap_{i=1}^n\left\{v\in\Z^{m(\bs{\pi})+fc+n}\,\middle\vert\, L_i^j\cdot v = -B_i^{\pi_j} + e_i\cdot x_{h_jt}\right\}\right),
\end{equation*}
which is a positive polyhedral subset of $\Z^{m(\bs{\pi})}$. Now since finite unions of polyhedral sets are polyhedral, $\overrightarrow{V(\bs{\pi})}=\bigcup_{t\in T}\overrightarrow{V(\bs{\pi},t)}$ is polyhedral, which proves the Theorem.
\end{proof}


\section{Conjugacy Growth Series}\label{sec:conjugacy}
In this section we will prove the following.
\begin{theorem}\label{thm:conj}
 Let $G$ be a virtually abelian group, with finite generating set $S$, and weight function $\omega\colon S\rightarrow\N_+$. Then the weighted conjugacy growth series of $G$ with respect to $S$ is rational.
\end{theorem}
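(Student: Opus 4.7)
The plan is to construct, for each pattern $\pi \in P$, a polyhedral set $C^\pi \subseteq \overrightarrow{U^\pi}$ so that $\bigsqcup_{\pi \in P} C^\pi$ is a set of unique minimal-weight representatives for the conjugacy classes of $G$. Rationality of the weighted conjugacy growth series then follows from Proposition~\ref{prop:polyhedralrationalgrowth} and the finiteness of $P$.

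The key technical step is to make $G$-conjugacy polyhedrally detectable on $(\vec{v}, \vec{w})$. Every conjugator has the form $xt$ with $x \in \Z^n$ and $t \in T$, and the identity $xsx^{-1} = ((I-\Delta_s)x)\cdot s$ (immediate from $sxs^{-1} = \Delta_s x$), together with \eqref{eq:wbarconj}, shows that $xt\overline{w}t^{-1}x^{-1}$ has standard form with $i$-th coordinate $A^\mu_{i,t}\cdot\vec{w}+B^\mu_{i,t}+e_i\cdot(I-\Delta_s)x$ and $T$-component $s\in T$, where $s$ denotes the $T$-component of $t\overline{\mu}t^{-1}$ (depending only on $\mu$ and $t$). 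Comparing with \eqref{eq:wbar}, $\overline{v}$ and $\overline{w}$ are $G$-conjugate if and only if there exist $t\in T$ and $x\in \Z^n$ with $t_\pi = s$ and
\[
A^\pi_i\cdot\vec{v}+B^\pi_i - A^\mu_{i,t}\cdot\vec{w} - B^\mu_{i,t} = e_i\cdot(I-\Delta_s)x, \quad 1\le i\le n.
\]
For each admissible $t$, this is an intersection of elementary sets in $(\vec{v},\vec{w},x)\in\Z^{m(\pi)+m(\mu)+n}$. Projecting out $x$ via Proposition~\ref{prop:polyaffine}, taking the finite union over $t\in T$, and intersecting with $\N^{m(\pi)+m(\mu)}$ yields a polyhedral set $K^{\pi,\mu}\subseteq\N^{m(\pi)+m(\mu)}$ consisting of all pairs of conjugate elements with patterns $\pi$ and $\mu$.

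Fix a total order on $P$. For each pair $(\pi_k,\mu)\in P\times P$, intersect $K^{\pi_k,\mu}$ with $\overrightarrow{U^{\pi_k}}\times\overrightarrow{U^\mu}$ and with the elementary inequality $A^\mu_{n+1}\cdot\vec{w} + B^\mu_{n+1} < A^{\pi_k}_{n+1}\cdot\vec{v}+B^{\pi_k}_{n+1}$, which encodes $\omega(w)<\omega(v)$ via \eqref{eq:weight}, then project onto the first factor to obtain a polyhedral set $R^{\pi_k,\mu}\subseteq\overrightarrow{U^{\pi_k}}$. Analogously define $R_*^{\pi_k,\mu}$ for $\mu<\pi_k$ using equality of weights, and $R_*^{\pi_k,\pi_k}$ using equality of weights together with a lexicographic tiebreaker $\vec{w}<_{\mathrm{lex}}\vec{v}$ (polyhedral as a finite union of elementary intersections). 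Setting
\[
C^{\pi_k} := \overrightarrow{U^{\pi_k}} \setminus \Bigl(\bigcup_{\mu\in P} R^{\pi_k,\mu} \;\cup\; \bigcup_{\mu\leq \pi_k} R_*^{\pi_k,\mu}\Bigr),
\]
each $C^{\pi_k}$ is polyhedral by Proposition~\ref{prop:polyclosed}, and by construction every conjugacy class contributes exactly one minimal-weight representative to $\bigsqcup_k C^{\pi_k}$.

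The main obstacle is the first step: the conjugator's $\Z^n$-component $x$ acts via $I-\Delta_s$, whose image is typically a proper sublattice of $\Z^n$ (encoding the centralizer of $s\in T$), so conjugacy is not a linear condition on $(\vec{v},\vec{w})$ alone. Retaining $x$ as an auxiliary variable and projecting out via Proposition~\ref{prop:polyaffine} circumvents this cleanly, after which the construction of the $C^{\pi_k}$ parallels the inclusion--exclusion arguments of Section~\ref{sec:coset}.
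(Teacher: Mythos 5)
Your proposal is correct, and it reaches Theorem~\ref{thm:conj} by a genuinely different route than the paper. You encode the full conjugacy relation directly as a polyhedral condition on pairs of patterned words: you write an arbitrary conjugator as $xt$ with $x\in\Z^n$, $t\in T$, observe that the $\Z^n$-part of $xt\overline{w}t^{-1}x^{-1}$ is affine in $(\vec v,\vec w,x)$ for each fixed $t$, and then eliminate $x$ via Proposition~\ref{prop:polyaffine}. This is precisely the right way to handle the fact that $(I-\Delta_s)\Z^n$ is typically a proper sublattice --- the image of a basic polyhedral set under projection may only be polyhedral (needing congruences), but that is enough. From there, the inclusion--exclusion over patterns with a weight comparison and a lexicographic tiebreaker, in the style of the $R^{\pi,\mu}$, $R_*^{\pi,\mu}$ sets from the proof of Theorem~\ref{thm:cosetpart}, cleanly extracts a unique minimal-weight word per conjugacy class. (One should note explicitly, as you implicitly do, that a conjugacy class contains only finitely many elements of any given weight, so the weight-plus-pattern-plus-lex minimum exists; and that since $\bigsqcup_\pi U^\pi$ is a language of \emph{unique} element representatives, $\vec v=\vec w$ is the only way a word can ``compete with itself''.)

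The paper instead analyses the structure of conjugacy classes: it splits into classes inside $C_G(\Z^n)$, which have size at most $d$ and are handled via $d$-tuples of element representatives from the $U^\pi$, and classes outside $C_G(\Z^n)$, which are shown to be finite unions of cosets of the commutator subgroups $F(\gamma)$ and are handled via the coset-growth machinery ($U^\pi_F$ and Theorems~\ref{thm:cosetpart}--\ref{thm:cosetfull}); Lemma~\ref{lem:minimaltuplerep} then extracts a minimal representative from each $d$-tuple. Your approach is more uniform and avoids the structural case split and the dependence on Section~\ref{sec:coset} entirely, which makes it shorter and arguably more transparent as a proof of rationality. What the paper's route buys is structural information: the coset growth theorem is established as a standalone result of independent interest, and the description of conjugacy classes via $C_G(\Z^n)$ and the subgroups $F(\gamma)$ gives a concrete picture of the conjugacy geometry that a direct elimination argument does not.
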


In order to prove this Theorem, we show that the set of conjugacy classes of a virtually abelian group can be split into an infinite collection of finite classes, and an infinite collection of infinite classes. For the finite case, \cite{Benson} gives us a way to find a minimal representative for each element of the conjugacy class, and express a full set of representatives using polyhedral sets. In the infinite case, we express each conjugacy class as a finite union of cosets of certain subgroups. Section \ref{sec:coset} gives us a way to find a minimal representative word for a given coset, and to express a full set of such representatives using polyhedral sets. We thus find a finite set of candidates for a unique minimal representative for every conjugacy class (finite or infinite). This allows us to use Lemma \ref{lem:minimaltuplerep} to extract a single such representative for each class, so that the polyhedral set description, and thus rational growth, is preserved.

As above, we assume that $G$ contains $\Z^n$ as a normal subgroup, with $[G\colon\Z^n]=d<\infty$, and we let $T$ be a choice of transversal for $G/\Z^n$ such that $1_G\in T$. Furthermore, we fix an order on $T$: \[T=\{1,t_2,t_3,\ldots,t_d\}.\]

First, we must understand the structure of conjugacy classes in virtually abelian groups. Conjugacy classes have different structure depending on whether they are inside or outside the centralizer of $\Z^n$, $C_G(\Z^n)$. Thus we consider these cases separately. Note that if one element of a coset $\Z^nt$ centralizes $\Z^n$ then $t$ must centralize $\Z^n$ and hence the whole coset is in $C_G(\Z^n)$. So both $C_G(\Z^n)$ and $G\setminus C_G(\Z^n)$ are unions of $\Z^n$-cosets.

\subsection{Conjugacy classes of elements inside the centralizer of $\Z^n$}
\begin{lemma}\label{lem:structureinside}
Let $g\in C_G(\Z^n)$. Then the conjugacy class of $g$ has size at most $d$, and is given by \[[g]=\{tgt^{-1}\mid t\in T\}=\{g,t_2gt_2^{-1}\ldots,t_dgt_d^{-1}\}.\]
\end{lemma}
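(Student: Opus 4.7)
The plan is to exploit the fact that every element of $G$ factors as $xt$ with $x \in \Z^n$ and $t \in T$, so that conjugation by an arbitrary element of $G$ reduces to conjugation by a transversal element, modulo some $\Z^n$-factor which will turn out to act trivially.

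First I would observe that $C_G(\Z^n)$ is closed under $G$-conjugation: if $g \in C_G(\Z^n)$ and $h \in G$, then for any $z \in \Z^n$, normality gives $h^{-1}zh \in \Z^n$, so $g$ commutes with $h^{-1}zh$, and conjugating back shows $hgh^{-1}$ commutes with $z$. In particular, for every $t \in T$, the element $tgt^{-1}$ again lies in $C_G(\Z^n)$, and so it commutes with every element of $\Z^n$.

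Next, given an arbitrary $h \in G$, write $h = xt$ with $x \in \Z^n$ and $t \in T$ uniquely. Then
\begin{equation*}
hgh^{-1} = x(tgt^{-1})x^{-1} = tgt^{-1},
\end{equation*}
where the second equality uses the previous step, since $x \in \Z^n$ and $tgt^{-1} \in C_G(\Z^n)$ commute. This shows $[g] \subseteq \{tgt^{-1} \mid t \in T\}$, and the reverse inclusion is immediate. The size bound $|[g]| \leq d = |T|$ follows at once, and the choice $1_G \in T$ ensures $g$ itself appears in the list.

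This is essentially a routine calculation; there is no real obstacle. The only thing to be careful about is ensuring that the normalization of the transversal is used correctly (i.e., that conjugation by a coset representative is well-defined up to the $\Z^n$-part, which is absorbed precisely because $g$ centralizes $\Z^n$).
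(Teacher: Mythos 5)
Your proof is correct and follows essentially the same argument as the paper: write $h=xt$ with $x\in\Z^n$, $t\in T$, and note that the $x$-conjugation is absorbed because $tgt^{-1}$ again centralizes $\Z^n$. The only cosmetic difference is that you spell out the verification that $C_G(\Z^n)$ is closed under $G$-conjugation, whereas the paper simply invokes the standard fact that the centralizer of a normal subgroup is normal.
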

\begin{proof}
Let $h\in G$. Then $hgh^{-1}=xtgt^{-1}x^{-1}$ for some $x\in\Z^n$ and $t\in T$. Since the centralizer of a normal subgroup is itself a normal subgroup, $tgt^{-1}$ centralizes $\Z^n$, and so $hgh^{-1}=tgt^{-1}$.
\end{proof}

Recall the sets $U^\pi$ of minimal-length representatives for $\pi$-patterned words, introduced in Theorem \ref{thm:Upi}. Each element of a conjugacy class has a unique minimal-weight representative, and this is contained in $U^\pi$ for some $\pi\in P$ (recall Definition \ref{def:finitepatterns}). So by Lemma \ref{lem:structureinside}, each conjugacy class in $C_G(\Z^n)$ has at most $d$ candidate words for a weight minimal representative. A $d$-tuple of candidates has a $d$-fold pattern, the $d$-dimensional vector where the entries are the patterns of the component words of the $d$-tuple. We will show that for each $d$-fold pattern in $P^d$, the corresponding set of $d$-tuples of candidate representatives forms a polyhedral set.
\begin{definition}
Fix a $d$-fold pattern $\bs{\pi}=(\pi_1,\pi_2,\ldots,\pi_d)\in P^d$ where $\overline{\pi_j}\in C_G(\Z^n)$ for each $j$, and $t_j\pi_1t_j^{-1}\in\Z^n\pi_j$ for each $2\leq j\leq d$. Let
\begin{align*}
C(\bs{\pi}):=\left\{\left(w^{(1)},\ldots,w^{(d)}\right)\in \right.&\left.U^{\pi_1}\times\cdots\times U^{\pi_d}\,\middle\vert\,\right. \\ &\left.\overline{w^{(j)}}=t_j\overline{w^{(1)}}t_j^{-1},~\text{ for each }2\leq j\leq d\right\}.
\end{align*}
\end{definition}
\begin{remark}\label{rem:candidatesinside}
Note that each tuple in $C(\bs{\pi})$ corresponds to a conjugacy class, and (by definition of the sets $U^{\pi_j}$) the weight of a conjugacy class is realised by at least one of the words in the corresponding tuple.
\end{remark}

\begin{proposition}
For each $C(\bs{\pi})$, the set $\overrightarrow{C(\bs{\pi})}\subset\N^{m(\bs{\pi})}$ is polyhedral.
\end{proposition}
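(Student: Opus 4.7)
The plan is to express $\overrightarrow{C(\bs{\pi})}$ as the intersection of two kinds of polyhedral sets inside $\N^{m(\bs{\pi})}$: first, the condition that each component word lies in the corresponding $U^{\pi_j}$, and second, a finite family of linear (elementary) constraints coming from Lemma~\ref{lem:detectconj} that encode the conjugacy condition $\overline{w^{(j)}} = t_j \overline{w^{(1)}} t_j^{-1}$.

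First I would handle the ``each component lies in $U^{\pi_j}$'' requirement. By Theorem~\ref{thm:Upi}, each $\overrightarrow{U^{\pi_j}} \subseteq \N^{m(\pi_j)}$ is polyhedral. Let $\rho_j \colon \N^{m(\bs{\pi})} \to \N^{m(\pi_j)}$ be the projection onto the block of coordinates corresponding to the $j$th component of the $d$-tuple. Projections are integral affine transformations, so by Proposition~\ref{prop:polyaffine} each preimage $\rho_j^{-1}\!\left(\overrightarrow{U^{\pi_j}}\right)$ is a polyhedral subset of $\N^{m(\bs{\pi})}$. Their finite intersection (polyhedral by Proposition~\ref{prop:polyclosed}) is exactly the set of tuples $\vec{z}$ in $\N^{m(\bs{\pi})}$ whose block decomposition gives a tuple in $\overrightarrow{U^{\pi_1}} \times \cdots \times \overrightarrow{U^{\pi_d}}$.

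Next I would translate the conjugacy condition. By assumption $\overline{\pi_j}$ and $t_j \overline{\pi_1} t_j^{-1}$ lie in the same $\Z^n$-coset for each $2 \leq j \leq d$, so criterion~(1) of Lemma~\ref{lem:detectconj} is automatic. Criterion~(2) says that $\overline{w^{(j)}} = t_j \overline{w^{(1)}} t_j^{-1}$ if and only if, for every $1 \leq i \leq n$,
\begin{equation*}
A_i^{\pi_j} \cdot \overrightarrow{w^{(j)}} + B_i^{\pi_j} \;=\; A_{i,t_j}^{\pi_1} \cdot \overrightarrow{w^{(1)}} + B_{i,t_j}^{\pi_1}.
\end{equation*}
For each such pair $(i,j)$ I would assemble a single vector $F_i^j \in \Z^{m(\bs{\pi})}$ by placing $-A_{i,t_j}^{\pi_1}$ in the first block, $A_i^{\pi_j}$ in the $j$th block, and zeros elsewhere. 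Then the constraint becomes the single scalar equation $F_i^j \cdot \vec{z} = B_{i,t_j}^{\pi_1} - B_i^{\pi_j}$, which cuts out an elementary (hence polyhedral) set in $\Z^{m(\bs{\pi})}$.

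Finally I would assemble the full description
\begin{equation*}
\overrightarrow{C(\bs{\pi})} \;=\; \Biggl(\bigcap_{j=1}^d \rho_j^{-1}\!\left(\overrightarrow{U^{\pi_j}}\right)\Biggr) \,\cap\, \Biggl(\bigcap_{j=2}^d \bigcap_{i=1}^n \bigl\{\vec{z}\in\Z^{m(\bs{\pi})}\,\bigm|\,F_i^j\cdot\vec{z} = B_{i,t_j}^{\pi_1} - B_i^{\pi_j}\bigr\}\Biggr)
\end{equation*}
and conclude polyhedrality from Proposition~\ref{prop:polyclosed}. Since the argument is really just a bookkeeping exercise once Lemma~\ref{lem:detectconj} is in hand, there is no genuine obstacle here; the only care needed is to correctly embed the pattern-specific vectors $A_i^{\pi_j}$ and $A_{i,t_j}^{\pi_1}$ (living in different-dimensional spaces) into the common ambient space $\Z^{m(\bs{\pi})}$ so that the linear constraints are expressed in its coordinates.
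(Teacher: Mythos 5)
Your proof is correct and follows essentially the same route as the paper: apply Lemma~\ref{lem:detectconj} (criterion (1) being automatic by the choice of $\bs{\pi}$), encode criterion (2) as a family of elementary sets cut out by block vectors placing $\pm A_{i,t_j}^{\pi_1}$ and $\mp A_i^{\pi_j}$ in the first and $j$th blocks, and intersect with $\overrightarrow{U^{\pi_1}}\times\cdots\times\overrightarrow{U^{\pi_d}}$. Your $F_i^j$ is merely the negative of the paper's, with the right-hand constant negated to match, and your preimage-under-projection justification of the Cartesian product's polyhedrality just makes explicit a step the paper takes for granted.
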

\begin{proof}
Consider $w^{(1)}\in U^{\pi_1}$ and $w^{(j)}\in U^{\pi_j}$ where $t_j\overline{\pi_1}t_j^{-1}\in\Z^n\overline{\pi_j}$. By Lemma \ref{lem:detectconj}, $\overline{w^{(j)}}=t_j\overline{w^{(1)}}t_j^{-1}$ if and only if 
\begin{equation}\label{eq:conjcondition1}
A_{i,t_j}^{\pi_1}\cdot \overrightarrow{w^{(1)}}-A_i^{\pi_j}\cdot \overrightarrow{w^{(j)}}=B_i^{\pi_j}-B_{i,t_j}^{\pi_1}
\end{equation}
for each $1\leq i\leq n$. We express this using linear algebra.
Define 
\begin{equation*}
F_i^j(\bs{\pi})=\begin{pmatrix} A_{i,t_j}^{\pi_1} \\ 0 \\ \vdots \\ 0 \\ -A_i^{\pi_j} \\ 0 \\ \vdots \\ 0 \end{pmatrix}
\begin{tabular}{l}
$\lefteqn{\phantom{\begin{matrix} A_{i,t_j}^{\pi_1} \end{matrix}}}$ \small{$m(\pi_1)$ rows}\\
$\left.\lefteqn{\phantom{\begin{matrix} 0 \\ \vdots \\ 0 \end{matrix}}}\right\}$ \small{$\sum_{k=2}^{j-1}m(\pi_k)$ zeroes} \\
$\lefteqn{\phantom{\begin{matrix} -A_i^{\pi_j} \end{matrix}}}$ \small{$m(\pi_j)$ rows} \\
$\left.\lefteqn{\phantom{\begin{matrix} 0 \\ \vdots \\ 0 \end{matrix}}}\right\}$ \small{$\sum_{k=j+1}^d m(\pi_k)$ zeroes}
\end{tabular}
\end{equation*}
for each $1\leq i\leq n$, $2\leq j\leq d$. Then the vector $\left(\overrightarrow{w^{(1)}},\ldots,\overrightarrow{w^{(d)}}\right)^T$ satisfies \eqref{eq:conjcondition1} for some $j$ if and only if \[F_i^j(\bs{\pi})\cdot \begin{pmatrix}\overrightarrow{w^{(1)}} \\ \vdots \\ \overrightarrow{w^{(d)}} \end{pmatrix}=B_i^{\pi_j}-B_{i,t_j}^{\pi_1}\] for each $1\leq i \leq n$.
Thus \[\overrightarrow{C(\bs{\pi})}= \left(\overrightarrow{U^{\pi_1}}\times\cdots\times \overrightarrow{U^{\pi_d}}\right)\cap \bigcap_{j=2}^d \bigcap_{i=1}^n \left\{ \vec{z}\in\Z^{m(\bs{\pi})}\,\middle\vert\, F_i^j(\bs{\pi})\cdot\vec{z} = B_i^{\pi_j}-B_{i,t_j}^{\pi_1}\right\}.\]
This is therefore a polyhedral set.
\end{proof}

\subsection{Conjugacy classes of elements outside the centralizer of $\Z^n$}

We express the conjugacy classes in terms of certain cosets, and use the sets $U_F^\pi$ introduced in Definition \ref{def:UHpi} to find polyhedral sets of conjugacy class representatives.

\begin{definition}
	For any $\gamma\in G$, define the subgroup
	\begin{equation*}
	F(\gamma)=\{[x,\gamma]\mid x\in\Z^n\}.
	\end{equation*}
\end{definition}
Note that this is indeed a subgroup of $G$, since if $x,y\in\Z^n$, we have \[[x,\gamma][y,\gamma]=x\gamma x^{-1}\gamma^{-1}y\gamma y^{-1}\gamma^{-1}=xy\gamma x^{-1}\gamma^{-1}\gamma y^{-1}\gamma^{-1}=[xy,\gamma].\] Furthermore, since $\Z^n$ is normal, $[x,\gamma]\in\Z^n$, and so $F(\gamma)$ is a subgroup of $\Z^n$, and hence is free abelian.
\begin{remark}\label{rem:Hgroups}
Let $a\in\Z^n$ and $t\in T$. Then since $\Z^n$ is normal, $[x,at]=xatx^{-1}t^{-1}a^{-1}=xtx^{-1}t^{-1}aa^{-1}=[x,t]$. So $F(\gamma)$ depends only on the $\Z^n$-coset that $\gamma$ is contained in. Thus if $w\in W^\pi$ then $F(\overline{w})=F(\overline{\pi})$.
\end{remark}

If $A$ and $B$ are subsets of some group $G$, write $\prescript{B}{}{A}$ for the conjugate of $A$ by $B$, that is $\prescript{B}{}{A}=\{bab^{-1}\mid a\in A,b\in B\}$.
\begin{lemma}\label{lem:structureoutside}
	If $g\in G\setminus C_G(\Z^n)$ then its conjugacy class is given by a union of finitely many cosets as follows
	\begin{equation*}
	[g]=\bigcup_{t\in T}\left\{[x,tgt^{-1}]\mid x\in\Z^n\right\}tgt^{-1} = \bigcup_{t\in T}F(tgt^{-1})tgt^{-1}.
	\end{equation*}
\end{lemma}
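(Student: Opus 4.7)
The plan is to decompose an arbitrary conjugator $h \in G$ using the coset decomposition $G = \bigsqcup_{t \in T} \Z^n t$, and then compute $hgh^{-1}$ explicitly so that the commutator structure $F(tgt^{-1})$ appears naturally.

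First I would write an arbitrary element of $G$ uniquely as $h = xt$ with $x \in \Z^n$ and $t \in T$, giving the decomposition
\begin{equation*}
[g] = \{hgh^{-1} \mid h \in G\} = \bigcup_{t \in T}\left\{(xt)g(xt)^{-1} \mid x \in \Z^n\right\}.
\end{equation*}
For each fixed $t$, I would simplify $(xt)g(xt)^{-1} = x(tgt^{-1})x^{-1}$, and then introduce the commutator by writing
\begin{equation*}
x(tgt^{-1})x^{-1} = \bigl(x(tgt^{-1})x^{-1}(tgt^{-1})^{-1}\bigr)(tgt^{-1}) = [x, tgt^{-1}]\,(tgt^{-1}).
\end{equation*}
Since $[x, tgt^{-1}]$ ranges over all of $F(tgt^{-1})$ as $x$ ranges over $\Z^n$ (by definition of $F(\gamma)$), the inner set becomes $F(tgt^{-1})\,tgt^{-1}$, and taking the union over $t \in T$ gives exactly the claimed identity. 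The union is finite because $T$ is finite (of size $d$).

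There is essentially no obstacle: the argument is a direct computation once one decomposes via the transversal and recognizes the commutator. The only points worth double-checking are that $[x, tgt^{-1}] \in \Z^n$ (which follows from the normality of $\Z^n$, ensuring $F(tgt^{-1})$ is a well-defined subgroup of $\Z^n$, as noted in the excerpt), and that the formula really produces a union of full cosets of the subgroups $F(tgt^{-1})$ rather than partial ones — this is ensured by the surjectivity of the map $x \mapsto [x, tgt^{-1}]$ from $\Z^n$ onto $F(tgt^{-1})$, which holds by definition.
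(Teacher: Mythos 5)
Your proof is correct and follows essentially the same route as the paper: decompose each conjugator as $h = xt$ with $x \in \Z^n$, $t \in T$, rewrite $x(tgt^{-1})x^{-1} = [x, tgt^{-1}]\,(tgt^{-1})$, and use the defining surjectivity of $x \mapsto [x, tgt^{-1}]$ onto $F(tgt^{-1})$. The paper phrases the intermediate step as conjugating the finite set $\{tgt^{-1} \mid t \in T\}$ by $\Z^n$, but the underlying computation is identical.
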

\begin{proof}
Let $g\in G\setminus C_G(\Z^n)$, and suppose $x\in\Z^n$. We have $xgx^{-1}=xgx^{-1}g^{-1}g=[x,g]g$. Now the conjugacy class is given by
\begin{align*}
	[g]=\prescript{G}{}{\{g\}}&=\{xtg(xt)^{-1}\mid x\in\Z^n,t\in T\}=\prescript{\Z^n}{}{\{tgt^{-1}\mid t\in T\}} \\
	&=\bigcup_{t\in T}\prescript{\Z^n}{}{\{tgt^{-1}\}}   =\bigcup_{t\in T} \left\lbrace \left[x,tgt^{-1}\right]\mid x\in\Z^n\right\rbrace tgt^{-1}.
	\end{align*}
\end{proof}

\begin{definition}
Fix a $d$-fold pattern $\bs{\pi}=(\pi_1,\ldots,\pi_d)$ where $\overline{\pi_j}\in G\setminus C_G(\Z^n)$ for each $j$, and $t_j\overline{\pi_1}t_j^{-1}\in\Z^n\overline{\pi_j}$ for $2\leq j\leq d$. Define \begin{align*}
C'(\bs{\pi})=\left\{\left(w^{(1)},\ldots,w^{(d)}\right)\in\right.&\left. U_{F(\overline{\pi_1})}^{\pi_1}\times\cdots\times U_{F(\overline{\pi_d})}^{\pi_d}\,\middle\vert\, \right. \\
&\left.\overline{w^{(j)}}\in\prescript{\Z^nt_j}{}{\left\{\overline{w^{(1)}}\right\}},~2\leq j\leq d\right\},
\end{align*}
where $U_{F(\overline{\pi_i})}^{\pi_i}$ is as in Definition \ref{def:UHpi}.
That is, $C'(\bs{\pi})$ is the set of $d$-tuples of words where each $w^{(j)}$ is the unique minimal representative for its $F(\overline{\pi_j})$-coset, and $\overline{w^{(j)}}$ is conjugate to $\overline{w^{(1)}}$ via an element of $\Z^nt_j$. 
\end{definition}

\begin{proposition}\label{prop:candidatesoutside}
Each element of $C'(\bs{\pi})$ consists of a $d$-tuple of words representing elements of the same conjugacy class. Furthermore, the weight of the conjugacy class is realised by the word(s) of smallest weight in the $d$-tuple.
\end{proposition}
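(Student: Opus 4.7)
The plan is to verify both assertions essentially directly from the definitions set up for $C'(\bs{\pi})$. The first claim is immediate: by the defining condition of $C'(\bs{\pi})$, each $\overline{w^{(j)}}$ lies in $\prescript{\Z^n t_j}{}{\{\overline{w^{(1)}}\}}$ and is therefore conjugate to $\overline{w^{(1)}}$; transitivity of conjugacy then gives that all the $\overline{w^{(j)}}$ share a single conjugacy class.

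For the weight assertion, I would first invoke Lemma \ref{lem:structureoutside} with $g = \overline{w^{(1)}}$ to obtain the decomposition
\[[\overline{w^{(1)}}] = \bigcup_{j=1}^d F\bigl(t_j \overline{w^{(1)}} t_j^{-1}\bigr) \cdot t_j \overline{w^{(1)}} t_j^{-1},\]
and then identify the $j$-th summand with the coset $F(\overline{\pi_j}) \cdot \overline{w^{(j)}}$. For the group, note that $\overline{w^{(1)}}$ and $\overline{\pi_1}$ lie in the same $\Z^n$-coset by Remark \ref{rem:patternsandcosets}, so (by normality) $t_j \overline{w^{(1)}} t_j^{-1}$ lies in the same $\Z^n$-coset as $t_j \overline{\pi_1} t_j^{-1}$, which by the standing assumption on $\bs{\pi}$ equals $\Z^n \overline{\pi_j}$; Remark \ref{rem:Hgroups} then yields $F(t_j \overline{w^{(1)}} t_j^{-1}) = F(\overline{\pi_j})$. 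For the coset equality, the relation $\overline{w^{(j)}} = x t_j \overline{w^{(1)}} t_j^{-1} x^{-1}$ for some $x \in \Z^n$ (from the definition of $C'(\bs{\pi})$) rearranges to $\overline{w^{(j)}} \bigl(t_j \overline{w^{(1)}} t_j^{-1}\bigr)^{-1} = [x, t_j \overline{w^{(1)}} t_j^{-1}] \in F(\overline{\pi_j})$, so the two cosets coincide.

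With the identification $[\overline{w^{(1)}}] = \bigcup_j F(\overline{\pi_j}) \overline{w^{(j)}}$ in hand, the weight assertion follows straightforwardly. Any word $w \in \wt{S}^*$ representing an element of the conjugacy class represents an element of some coset $F(\overline{\pi_j}) \overline{w^{(j)}}$; since $w^{(j)} \in U^{\pi_j}_{F(\overline{\pi_j})}$, Definition \ref{def:UHpi} together with Proposition \ref{elementary} guarantees that $w^{(j)}$ is a weight-minimal representative of that coset taken across all patterns in $P$, so $\omega(w) \geq \omega(w^{(j)}) \geq \min_k \omega(w^{(k)})$. Conversely, each $w^{(k)}$ lies in the conjugacy class, so the minimum weight in the class equals $\min_k \omega(w^{(k)})$, as desired. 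The only mildly delicate point is the upgrade from "minimal amongst $W^{\pi_j}$'' to "minimal across all of $\wt{S}^*$", but this is precisely what the definition of $U^{\pi_j}_{F(\overline{\pi_j})}$ is designed to encode, so no real obstacle remains.
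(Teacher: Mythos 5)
Your proposal is correct and follows the same high-level approach as the paper's proof: the first claim is immediate from the definition, and the second follows from Lemma \ref{lem:structureoutside} together with the fact that each $w^{(j)}$ is a weight-minimal coset representative. Where you go beyond the paper is in supplying the (useful) verification that the $j$-th coset $F\bigl(t_j\overline{w^{(1)}}t_j^{-1}\bigr)\,t_j\overline{w^{(1)}}t_j^{-1}$ in the decomposition of $[\overline{w^{(1)}}]$ literally coincides with $F(\overline{\pi_j})\,\overline{w^{(j)}}$ — via Remark \ref{rem:patternsandcosets}, Remark \ref{rem:Hgroups}, and the commutator rearrangement — whereas the paper simply asserts that each $\overline{w^{(j)}}$ lies in one of the cosets and that it is a minimal representative; your argument is the more careful version of what the paper intends.
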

\begin{proof}
Let $\left(w^{(1)},w^{(2)},\ldots,w^{(d)}\right)\in C'(\bs{\pi})$. From the definition of $C'(\bs{\pi})$, each $\overline{w^{(j)}}$ is conjugate to $\overline{w^{(1)}}$, so each component word represents an element of the same conjugacy class. Now from Lemma \ref{lem:structureoutside}, we see that each word represents one of the finite number of cosets that make up the corresponding conjugacy class. In fact, since each $w^{(j)}$ is contained in $U_{F(\overline{\pi_j})}$, each component word is a minimal-weight representative for the coset. Therefore the minimal-weight representative(s) for the conjugacy class must be contained in $\{w^{(1)},w^{(2)},\ldots,w^{(d)}\}$.
\end{proof}

\begin{proposition}
The set $\overrightarrow{C'(\bs{\pi})}\in\N^{m(\bs{\pi})}$ is a polyhedral set.
\end{proposition}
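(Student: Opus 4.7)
The plan is to cut out $\overrightarrow{C'(\bs{\pi})}$ from a product of already-known polyhedral sets by a finite system of integer linear equations, using auxiliary variables to encode the conjugacy constraint. Rationality then follows from Propositions \ref{prop:polyclosed} and \ref{prop:polyaffine}.

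First, I would observe that by Theorem \ref{thm:cosetpart} (applied to the free abelian subgroups $F(\overline{\pi_j})\leq\Z^n$), each $\overrightarrow{U^{\pi_j}_{F(\overline{\pi_j})}}\subset\N^{m(\pi_j)}$ is polyhedral, so the Cartesian product $\overrightarrow{U^{\pi_1}_{F(\overline{\pi_1})}}\times\cdots\times\overrightarrow{U^{\pi_d}_{F(\overline{\pi_d})}}$ is a polyhedral subset of $\N^{m(\bs{\pi})}$. It remains to impose the conjugacy condition $\overline{w^{(j)}}\in\prescript{\Z^n t_j}{}{\{\overline{w^{(1)}}\}}$ for $2\leq j\leq d$.

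Next, I would rewrite this condition as a coset condition. For $x\in\Z^n$, $xt_j\overline{w^{(1)}}t_j^{-1}x^{-1}=[x,t_j\overline{w^{(1)}}t_j^{-1}]\cdot t_j\overline{w^{(1)}}t_j^{-1}$, and by Remark~\ref{rem:Hgroups} the set of commutators $\{[x,t_j\overline{w^{(1)}}t_j^{-1}]\mid x\in\Z^n\}$ equals $F(t_j\overline{\pi_1}t_j^{-1})=F(\overline{\pi_j})$ (using the hypothesis that $t_j\overline{\pi_1}t_j^{-1}$ and $\overline{\pi_j}$ lie in the same $\Z^n$-coset). So the conjugacy condition is equivalent to $\overline{w^{(j)}}\in F(\overline{\pi_j})\cdot t_j\overline{w^{(1)}}t_j^{-1}$. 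Both sides lie in the same $\Z^n$-coset $\Z^n t_{\pi_j}$, so by equations \eqref{eq:wbar} and \eqref{eq:wbarconj} the condition amounts to the existence, for each $j$, of integers $a_1^{(j)},\ldots,a_{f_j}^{(j)}$ (where $\{\bs{b}_1^{(j)},\ldots,\bs{b}_{f_j}^{(j)}\}$ is a chosen basis of $F(\overline{\pi_j})$) such that for each $1\leq i\leq n$,
\[
A_i^{\pi_j}\cdot\overrightarrow{w^{(j)}}-A_{i,t_j}^{\pi_1}\cdot\overrightarrow{w^{(1)}}-\sum_{k=1}^{f_j}a_k^{(j)}(e_i\cdot\bs{b}_k^{(j)})=B_{i,t_j}^{\pi_1}-B_i^{\pi_j}.
\]

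Finally, I would set $M=m(\bs{\pi})+\sum_{j=2}^d f_j$ and, for each $(i,j)$, assemble a vector $G_i^j\in\Z^M$ whose entries place $-A_{i,t_j}^{\pi_1}$ in the block for $\pi_1$, $A_i^{\pi_j}$ in the block for $\pi_j$, and $-(e_i\cdot\bs{b}_1^{(j)},\ldots,e_i\cdot\bs{b}_{f_j}^{(j)})$ in the block of auxiliary variables for $j$, with zeros elsewhere. The intersection
\[
\Psi=\bigcap_{j=2}^{d}\bigcap_{i=1}^{n}\{\vec{v}\in\Z^M\mid G_i^j\cdot\vec{v}=B_{i,t_j}^{\pi_1}-B_i^{\pi_j}\}
\]
is a basic polyhedral set. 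Let $p\colon\Z^M\to\Z^{m(\bs{\pi})}$ be the projection onto the first $m(\bs{\pi})$ coordinates (an integral affine transformation). Then
\[
\overrightarrow{C'(\bs{\pi})}=p(\Psi)\cap\bigl(\overrightarrow{U^{\pi_1}_{F(\overline{\pi_1})}}\times\cdots\times\overrightarrow{U^{\pi_d}_{F(\overline{\pi_d})}}\bigr),
\]
which is polyhedral by Propositions \ref{prop:polyaffine} and \ref{prop:polyclosed}. The only subtlety I anticipate is the first identification: correctly recognising that the $x$-orbit of $t_j\overline{w^{(1)}}t_j^{-1}$ under conjugation by $\Z^n$ is exactly the coset $F(\overline{\pi_j})\cdot t_j\overline{w^{(1)}}t_j^{-1}$, so that the Cayley graph–style conjugacy problem collapses to a coset membership problem that the earlier linear-algebraic machinery can handle. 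Everything else is a routine repackaging of the bookkeeping already developed in Sections~\ref{sec:prelim} and \ref{sec:coset}.
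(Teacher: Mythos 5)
Your proposal is correct and follows essentially the same route as the paper: reduce the conjugacy condition $\overline{w^{(j)}}\in\prescript{\Z^n t_j}{}{\{\overline{w^{(1)}}\}}$ to coset membership in $F(\overline{\pi_j})t_j\overline{w^{(1)}}t_j^{-1}$ via Remark~\ref{rem:Hgroups}, linearise this with auxiliary integer variables for a basis of $F(\overline{\pi_j})$, encode the resulting equalities by scalar products against assembled vectors, intersect, project away the auxiliary coordinates, and intersect with the product of the polyhedral sets $\overrightarrow{U^{\pi_j}_{F(\overline{\pi_j})}}$. The only cosmetic differences are that you combine what the paper calls $M_i^j(\bs{\pi})$ and $N_i^j(\bs{\pi})$ into a single vector $G_i^j$, and you correctly place the projection outside the double intersection (the paper's displayed formula puts it inside, which does not match the prose; your version is the intended reading).
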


\begin{proof}
We have
\begin{align*}
\prescript{\Z^nt_j}{}{\left\{\overline{w^{(1)}}\right\}}=F\left(t_j\overline{w^{(1)}}t_j^{-1}\right)t_j\overline{w^{(1)}}t_j^{-1}=F(\overline{\pi_j})t_j\overline{w^{(1)}}t_j^{-1}.
\end{align*}
by Remark \ref{rem:Hgroups}. Thus $\overline{w^{(j)}}\in\prescript{\Z^nt_j}{}{\left\{\overline{w^{(1)}}\right\}}$ if and only if there exists $y\in F(\overline{\pi_j})$ with
\begin{equation}\label{eq:conjcondition2}
\overline{w^{(j)}}=yt_j\overline{w^{(1)}}t_j^{-1}.
\end{equation}
Write $f_j$ for the rank of the free abelian group $F(\overline{\pi_j})\subset\Z^n$. Let $\{\bs{b}_1^{\pi_j},\ldots,\bs{b}_{f_j}^{\pi_j}\}$ be a choice of basis for $F(\overline{\pi_j})$. Then there exists $y\in F(\overline{\pi_j})$ satisfying \eqref{eq:conjcondition2} if and only if there exist integers $a_1,\ldots,a_{f_j}$ with
\begin{equation*}
\overline{w^{(j)}}=\sum_{k=1}^{f_j}a_k\bs{b}_k^{\pi_j}t_j\overline{w^{(1)}}t_j^{-1}.
\end{equation*}
Expanding using identities \eqref{eq:wbar} and \eqref{eq:wbarconj} gives
\begin{align*}
\left[\begin{pmatrix} A_1^{\pi_j}\cdot\overrightarrow{w^{(j)}} \\ A_2^{\pi_j}\cdot\overrightarrow{w^{(j)}} \\ \vdots \\ A_n^{\pi_j}\cdot\overrightarrow{w^{(j)}} \end{pmatrix} + \begin{pmatrix}B_1^{\pi_j} \\ B_2^{\pi_j} \\ \vdots \\ B_n^{\pi_j}\end{pmatrix}\right]t_{\pi_j}= 
\left(\sum_{k=1}^{f_j}a_k\bs{b}_k^{\pi_j}\right)\left[\begin{pmatrix} A_{1,t_j}^{\pi_1}\cdot\overrightarrow{w^{(1)}} \\ A_{2,t_j}^{\pi_1}\cdot\overrightarrow{w^{(1)}} \\ \vdots \\ A_{n,t_j}^{\pi_1}\cdot\overrightarrow{w^{(1)}}\end{pmatrix}+\begin{pmatrix}B_{1,t_j}^{\pi_1} \\ B_{2,t_j}^{\pi_1} \\ \vdots \\ B_{n,t_j}^{\pi_1}\end{pmatrix}\right]t_{\pi_j},
\end{align*}
or equivalently,
\begin{equation}\label{eq:conjcondition3}
A_{i,t_j}^{\pi_1}\cdot\overrightarrow{w^{(1)}} - A_i^{\pi_j}\cdot\overrightarrow{w^{(j)}} + e_i\cdot\sum_{k=1}^{f_j}a_k\bs{b}_k^{\pi_j} = B_i^{\pi_j} - B_{i,t_j}^{\pi_1}
\end{equation}
for each $1\leq i\leq n$. We express this using linear algebra.

For each $1\leq i\leq n$ and $2\leq j\leq d$, consider the vectors
\begin{equation*}
M_i^j(\bs{\pi}) = \begin{pmatrix} A_{i,t_j}^{\pi_1} \\ 0 \\ \vdots \\ 0 \\ -A_i^{\pi_j} \\ 0 \\ \vdots \\ 0 \end{pmatrix} 
\begin{tabular}{l}
$\lefteqn{\phantom{\begin{matrix} A_{i,t_j}^{\pi_1} \end{matrix}}}$ \small{$m(\pi_1)$ rows}\\
$\left.\lefteqn{\phantom{\begin{matrix} 0 \\ \vdots \\ 0 \end{matrix}}}\right\}$ \small{$\sum_{k=2}^{j-1}m(\pi_k)$ zeroes}\\
$\lefteqn{\phantom{\begin{matrix} -A_i^{\pi_j} \end{matrix}}}$ \small{$m(\pi_j)$ rows} \\
$\left.\lefteqn{\phantom{\begin{matrix} 0 \\ \vdots \\ 0 \end{matrix}}}\right\}$ \small{$\sum_{k=j+1}^d m(\pi_k)$ zeroes}\\
\end{tabular}
\end{equation*}
and
\begin{equation*}
N_i^j(\bs{\pi}) = \begin{pmatrix} 0 \\ \vdots \\ 0 \\ e_i\cdot\bs{b}_1^{\pi_j}\\ e_i\cdot\bs{b}_2^{\pi_j} \\ \vdots \\ e_i\cdot\bs{b}_{f_j}^{\pi_j} \\ 0 \\ \vdots \\ 0 \end{pmatrix}
\begin{tabular}{l}
$\left.\lefteqn{\phantom{\begin{matrix} 0 \\ \vdots \\ 0 \end{matrix}}}\right\}$ \small{$\sum_{k=2}^{j-1}f_k$ zeroes} \\
$\left.\lefteqn{\phantom{\begin{matrix} e_i\cdot\bs{b}_1^{\pi_j}\\ e_i\cdot\bs{b}_2^{\pi_j} \\ \vdots \\ e_i\cdot\bs{b}_{f_j}^{\pi_j} \end{matrix}}}\right\}$ \small{$f_j$ rows} \\
$\left.\lefteqn{\phantom{\begin{matrix} 0 \\ \vdots \\ 0 \end{matrix}}}\right\}$ \small{$\sum_{k=j+1}^d f_k$ zeroes}
\end{tabular}
\end{equation*}
Let $f=\sum_{j=1}^d f_j$, i.e. the sum of the ranks of the free abelian subgroups $F(\overline{\pi_j})$, and hence the dimension of $N_i^j(\bs{\pi})$. Now by equation \eqref{eq:conjcondition3}, $\overline{w^{(j)}}\in\prescript{\Z^nt_j}{}{\left\{\overline{w^{(1)}}\right\}}$ precisely when there exist integers $a_1,a_2,\ldots a_f$ such that 
\begin{equation*}
M_i^j(\bs{\pi})\cdot\begin{pmatrix} \overrightarrow{w^{(1)}} \\ \overrightarrow{w^{(2)}} \\ \vdots \\ \overrightarrow{w^{(d)}}\end{pmatrix} + N_i^j(\bs{\pi})\cdot\begin{pmatrix} a_1 \\ a_2 \\ \vdots \\ a_f\end{pmatrix} = B_i^{\pi_j}-B_{i,t_j}^{\pi_1}
\end{equation*}
for each $1\leq i\leq n$ and $2\leq j\leq d$. We see that each of these identities defines an elementary set if we rewrite it as follows:
\begin{equation*}
\left\{\vec{z}\in\N^{m(\bs{\pi})+f}\,\middle\vert\, \begin{pmatrix} \vert \\ M_i^j(\bs{\pi}) \\ \vert \\ \vert \\ N_i^j(\bs{\pi}) \\ \vert \end{pmatrix}\cdot\vec{z} = B_i^{\pi_j} - B_{i,t_j}^{\pi_1} \right\}.
\end{equation*}
Taking the intersection of each elementary set, discarding the vector $(a_1,\ldots,a_r)$, and intersecting with the cartesian product of polyhedral sets $U_{F(\overline{\pi_1})}\times\cdots\times U_{F(\overline{\pi_d})}$, allows us to express the $m(\bs{\pi})$-dimensional vectors corresponding to $C'(\bs{\pi})$ as a polyhedral set:
\begin{align*}
\overrightarrow{C'(\bs{\pi})}=\left(U_{F(\overline{\pi_1})}\times\cdots\times U_{F(\overline{\pi_d})}\right)&\cap \\
\bigcap_{j=2}^d\bigcap_{i=1}^n p_{m(\bs{\pi})}&\left\{\vec{z}\in\N^{m(\bs{\pi})+r}\,\middle\vert\, \begin{pmatrix} \vert \\ M_i^j(\bs{\pi}) \\ \vert \\ \vert \\ N_i^j(\bs{\pi}) \\ \vert \end{pmatrix}\cdot\vec{z} = B_i^{\pi_j} - B_{i,t_j}^{\pi_1} \right\}
\end{align*}
where, as before, $p_k$ denotes projection onto the first $k$ coordinates.
\end{proof}

We are now ready to prove Theorem \ref{thm:conj}.
\begin{proof}[Proof of Theorem \ref{thm:conj}]
Each conjugacy class in $G$ has a $d$-tuple of candidates for a weight-minimal representative (see Remark \ref{rem:candidatesinside} and Proposition \ref{prop:candidatesoutside}), given by an element of $C(\bs{\pi})$ or $C'(\bs{\pi})$ for an appropriate $d$-fold pattern $\bs{\pi}$. The result will follow from the following claim:

There exists a finite set of $d$-fold patterns, $R$, so that:
\begin{enumerate}
\item the candidate representatives for every conjugacy class in $G$ (as $d$-tuples of words) have a $d$-fold pattern in $R$, and\label{item:Asufficient}
\item no conjugacy class is represented by more than one $d$-fold pattern in $R$.\label{item:Bunique}
\end{enumerate}
For each $\bs{\pi}\in R$, Lemma \ref{lem:minimaltuplerep} yields a set $\cL_{\bs{\pi}}\in\wt{S}^*$ of unique, minimal-weight representatives for the tuples of $C(\bs{\pi})$, or $C'(\bs{\pi})$. It follows from the above claim that $\bigcup_{\bs{\pi}\in R}\cL_{\bs{\pi}}$ is a finite disjoint union of sets, forming a language of unique minimal-weight representatives for the conjugacy classes of $G$. Since each $\cL_{\bs{\pi}}$ has rational growth series, we conclude that $G$ has rational conjugacy growth series.

Now we prove the claim. If $P$ is the finite set of patterns (with respect to $\wt{S}^*$) providing minimal weight representatives for each element of $G$ (as per Definition \ref{def:finitepatterns}), consider the set of ordered $d$-tuples $(\pi_1,\pi_2,\ldots,\pi_d)$ of elements of $P$, with the condition that $t_i\pi_1 t_i^{-1}\in\Z^n\pi_j$. For any set of such $d$-tuples which are permutations of each other, choose only one (arbitrarily), and discard the others. Call the resulting reduced set of $d$-fold patterns $R$. This is clearly a finite set, and is sufficient to represent all $d$-tuples of elements of $G$. This proves part \eqref{item:Asufficient}.

To see part \eqref{item:Bunique}, note that the candidates are uniquely determined (either the unique weight-minimal representatives for each element, in the $C_G(\Z^n)$ case, or the unique weight-minimal representatives for each coset component, in the $G\setminus C_G(\Z^n)$ case). A tuple of candidates uniquely determines a $d$-fold pattern in $R$ (since we have removed permutations). Thus the claim holds, and the theorem follows.
\end{proof}

\section*{Acknowledgments}

The author would primarily like to thank Laura Ciobanu for an immeasurable amount of mathematical discussion and writing advice. Thanks are also due to Turbo Ho for a helpful discussion, and Jim Howie for useful comments on a draft of this paper.  


\bibliography{references}{}
\bibliographystyle{amsplain}

\end{document}